\newtheorem{theorem}{Theorem}[section]
\newtheorem{lemma}[theorem]{Lemma}
\newtheorem{corollary}[theorem]{Corollary}
\newtheorem{proposition}[theorem]{Proposition}
\theoremstyle{definition}
\newtheorem{example}[theorem]{Example}
\newtheorem{remark}[theorem]{Remark}
\DeclareMathOperator{\codim}{codim}
\DeclareMathOperator{\id}{id}
\DeclareMathOperator{\rank}{rank}
\DeclareMathOperator{\Hom}{Hom}
\DeclareMathOperator{\gr}{gr}
\DeclareMathOperator{\im}{im}
\DeclareMathOperator{\ab}{ab}
\DeclareMathOperator{\Lie}{Lie}
\newcommand{\mc}[1]{\mathcal{#1}}
\DeclareMathSymbol{\Gamma}{\mathalpha}{operators}{0}
\newcommand{\G}{\Gamma}
\newcommand{\Z}{\mathbb{Z}}
\newcommand{\Q}{\mathbb{Q}}
\newcommand{\R}{\mathbb{R}}
\newcommand{\C}{\mathbb{C}}
\newcommand{\LL}{\mathbf{L}}
\renewcommand{\k}{\Bbbk}
\newcommand{\A}{\mathcal{A}}
\newcommand{\B}{\mathcal{B}}
\newcommand{\mf}[1]{\mathfrak{#1}}
\newcommand{\ov}[1]{\overline{#1}}
\newcommand{\Gg}[1]{G/\G_{#1}(G)}
\newcommand{\mcf}{\mc{F}}
\newcommand{\h}{\mf{h}}
\newcommand{\GG}[1]{G/\Gamma_{#1}(G)}
\newcommand{\GGG}[2]{{#1}/\G_{#2}({#1})}
\newcommand{\PL}{\scriptscriptstyle{\rm PL}}
\newcommand{\surj}{\twoheadrightarrow}
\newcommand{\inj}{\hookrightarrow}
\newcommand{\isom}{\xrightarrow{
   \,\smash{\raisebox{-0.65ex}{\ensuremath{\scriptstyle\simeq}}}\,}}
\newsavebox{\@brx}
\newcommand{\llangle}[1][]{\savebox{\@brx}{\(\m@th{#1\langle}\)}%
  \mathopen{\copy\@brx\kern-0.5\wd\@brx\usebox{\@brx}}}
\newcommand{\rrangle}[1][]{\savebox{\@brx}{\(\m@th{#1\rangle}\)}%
  \mathclose{\copy\@brx\kern-0.5\wd\@brx\usebox{\@brx}}}
\newcommand{\arxiv}[1]
{\texttt{\href{http://arxiv.org/abs/#1}{arXiv:#1}}}
\newcommand{\abs}[1]{\left| #1 \right|}
\numberwithin{table}{section}
\numberwithin{equation}{section}
\def\norms#1#2{\left\| #1 \right\|_{\lower 1ex\hbox{$\scriptstyle #2 $}}}
\def\namedlabel#1#2{\begingroup
    #2%
    \def\@currentlabel{#2}%
    \phantomsection\label{#1}.\endgroup
}
\def\@tocline#1#2#3#4#5#6#7{\relax
  \ifnum #1>\c@tocdepth 
  \else
    \par \addpenalty\@secpenalty\addvspace{#2}%
    \begingroup \hyphenpenalty\@M
    \@ifempty{#4}{%
      \@tempdima\csname r@tocindent\number#1\endcsname\relax
    }{%
      \@tempdima#4\relax
    }%
    \parindent\z@ \leftskip#3\relax \advance\leftskip\@tempdima\relax
    \rightskip\@pnumwidth plus4em \parfillskip-\@pnumwidth
    #5\leavevmode\hskip-\@tempdima
      \ifcase #1
       \or\or \hskip 1em \or \hskip 2em \else \hskip 3em \fi%
      #6\nobreak\relax
    \dotfill\hbox to\@pnumwidth{\@tocpagenum{#7}}\par
    \nobreak
    \endgroup
  \fi}
\title[Homology, lower central series, and hyperplane arrangements]%
{Homology, lower central series, and \\ hyperplane arrangements}
\dedicatory{To the memory of \c{S}tefan Papadima, 1953--2018}
\author{Richard~D.~Porter$^1$}
\author{Alexander~I.~Suciu$^{1,2}$}
\address{$^1$Department of Mathematics,
Northeastern University,
Boston, MA 02115, USA}
\email{\href{mailto:r.porter@northeastern.edu}{r.porter@northeastern.edu}}
\email{\href{mailto:a.suciu@northeastern.edu}{a.suciu@northeastern.edu}}
\thanks{$^2$Supported in part by the Simons Foundation Collaboration Grant 
for Mathematicians \#354156}
\keywords{Lower central series,
tower of nilpotent quotients,
cohomology ring,
associated graded Lie algebra,
holonomy Lie algebra, 
Malcev Lie algebra,
hyperplane arrangement,
decomposable arrangement.
}
\subjclass[2010]{Primary
20J05. 
Secondary 
16S37,  
16W70, 
17B70,  
20F14,  
20F18,  
20F40,  
55P62,   
57M05.  
}
\begin{document}
 
 \begin{abstract}
We explore finitely generated groups by studying the nilpotent 
towers and the various Lie algebras attached to such groups. 
Our main goal is to relate an isomorphism extension problem 
in the Postnikov tower to the existence of certain commuting diagrams. 
This recasts a result of G.~Rybnikov in a more general framework
and leads to an application to hyperplane arrangements, whereby 
we show that all the nilpotent quotients of a decomposable arrangement 
group are combinatorially determined.
\end{abstract}
 
\maketitle

\tableofcontents

\section{Introduction}
\label{sect:intro}

\subsection{Motivation}
\label{subsec:motivation}
The motivation for this paper comes from an effort to understand 
Rybnikov's invariant used in \cite{Rybnikov-1,Rybnikov-2,Rybnikov-3} to 
distinguish between the fundamental groups of complements of two 
hyperplane arrangements with the same incidence structure. 
Work of Arnold, Brieskorn, and Orlik--Solomon insures that 
an arrangement complement, $M(\A)$, is rationally formal, 
and that the cohomology ring $H^*(M(\A))$ is determined  
solely by the intersection lattice, $L(\A)$.  Thus, the complements 
of the Rybnikov pair of arrangements share the same rational 
homotopy type; in particular, the respective fundamental groups 
share the same rational associated graded Lie algebras and second 
nilpotent quotients.  Nevertheless, 
the third nilpotent quotients of those two groups are not isomorphic, 
for reasons that are to this date somewhat mysterious, despite 
repeated attempts to elucidate this phenomenon, 
see e.g.~\cite{ACCM-2005,ACCM-2007,Matei,Matei-Suciu}. 

We take here a different approach, closely modeled on 
Rybnikov's original approach from \cite{Rybnikov-1,Rybnikov-2}, 
yet from a more general point of view.  In the process, we develop 
a machinery for determining when a given isomorphism between 
the $n$-th nilpotent quotients of two groups satisfying certain mild 
finiteness and homological assumptions extends to an isomorphism 
between the $(n+1)$-st stages of the respective nilpotent towers.

\subsection{The holonomy map}
\label{subsec:holo}

Let $X$ be a connected CW-complex.  We will assume throughout 
that the first homology group $H_1(X)$ is finitely generated and torsion-free. 
Let $G=\pi_1(X)$ be the fundamental group of $X$, and let 
$\Gamma_n(G)$ denote its lower central series subgroups. 
Finally, let $X\to K(G_{\ab},1)$ be a classifying map corresponding 
to the abelianization homomorphism $G\to G_{\ab}$.  
The induced homomorphism of second homology groups, 
$h\colon H_2(X)\to H_2(G_{\ab})$, is called the \emph{holonomy map}\/ 
of $X$. 

Particularly interesting is the situation when the holonomy map is injective; 
this happens, for instance, when $X$ is the complement
of a complex hyperplane arrangement, or of a `rigid' link, 
or of an arrangement of transverse planes in $\R^4$, \cite{Matei-Suciu}.
Under this injectivity assumption, we show in 
Theorem \ref{theorem-split-exact} that there is a split exact sequence
\begin{equation}
\label{eq:split-exact}
\begin{tikzcd}[column sep=20pt]
0 \ar[r] & \Gamma_n(G)/\Gamma_{n+1}(G)\ar[r] 
& H_2(\GGG{G}{n}) \ar[r] & H_2(X) \ar[r] & 0\, .
\end{tikzcd}
\end{equation}

Many properties of a finitely generated group $G$ are reflected in 
the Lie algebras associated to it. One of those is the 
{\em associated graded Lie algebra}, $\gr(G)$, whose graded pieces 
are defined as $\gr_n(G)=\Gamma_n(G)/\Gamma_{n+1}(G)$, 
and whose Lie bracket is induced from the group commutator.  
The study of the associated graded Lie algebra was initiated in work of 
Magnus \cite{Magnus-1937}, Witt \cite{Witt}, 
Hall  \cite{Hall}, and  Lazard \cite{Lazard}. Much of the power of this approach 
comes from the various connections between the lower central series, 
nilpotent quotients, and group homology, as evidenced in the 
work of  Stallings \cite{Stallings},  Quillen \cite{Quillen-1968}, 
Dwyer \cite{Dwyer-1975}, and many others.  

Another Lie algebra associated to a group $G$ 
is the {\em holonomy Lie algebra},  $\h(G)$, which 
was introduced in work of Chen \cite{Chen73}, Kohno \cite{Kohno-83}, 
and Markl--Papadima \cite{Markl-Papadima}, and studied more recently 
by Papadima--Suciu \cite{Papadima-Suciu-2004} and 
Suciu--Wang \cite{Suciu-Wang-2,Suciu-Wang-1}.  
This Lie algebra depends only on data extracted from the 
cohomology of $G$ in low degrees.  In more detail,  
assuming $G_{\ab}$ is torsion-free, 
$\h(G)$ is defined as the quotient of the free Lie algebra on $G_{\ab}$ 
modulo the ideal generated by the image of the holonomy map, 
$H_2(G)\to H_2(G_{\ab})$.

The holonomy Lie algebra $\h(G)$ may be viewed as a quadratic 
approximation of the  associated graded Lie algebra $\gr(G)$.  
More precisely, there is a canonical epimorphism of graded Lie algebras, 
$\h(G)\twoheadrightarrow \gr(G)$, which is an isomorphism 
in degrees $n\le 2$, but is not necessarily injective in higher degrees 
(see, for instance, the examples in \cite{Suciu-Wang-1} of groups  
that are not graded-formal). Nevertheless, we
show in Theorem \ref{thm:gr3=h3}  that the map $\h_3(G)\to  \gr_3(G)$ 
is an isomorphism under the aforementioned injectivity assumption 
for the holonomy map.

\subsection{The main result}
\label{subsec:main result}
Let $X_a$ and $X_b$ be two path-connected spaces as above.
From \cite{Stallings} it follows that if a map
$f\colon X_a \to X_b$ induces an isomorphism
of first homology groups and an epimorphism of second homology groups,
then $f$ induces an isomorphism $\GGG{G_a}{n}\isom \GGG{G_b}{n}$
for $n \ge 2$, where $G_a$ and $G_b$ denote the fundamental groups
of $X_a$ and $X_b$; respectively.

The main result in this paper gives a necessary and sufficient condition
for a given map of coalgebras, 
\begin{equation}
\label{eq:coalg-map}
\begin{tikzcd}[column sep=20pt]
H_{\le 2}(X_a) \ar[r, "\ov{g}"] & H_{\le 2}(X_b)\, , 
\end{tikzcd}
\end{equation}
with $\ov{g}_1$ an isomorphism and $\ov{g}_2$ an epimorphism,
to \emph{extend}\/ to an isomorphism of nilpotent quotients for a given 
value of $n$; more precisely, that there be an isomorphism
$f_n\colon \GGG{G_a}{n}\to \GGG{G_b}{n}$ such that the diagram
\begin{equation}
\label{eq:h2coalg}
\begin{tikzcd}
H_2(\GGG{G_a}{n}) \ar[r, "(f_n)_\ast"] &  H_2(\GGG{G_b}{n})\\
 H_2(X_a) \ar[u] \ar[r, "\ov{g}_2"] & H_2(X_b)\ar[u]
\end{tikzcd}
\end{equation}
commutes.  To state the result, let  $N$ be a nilpotent group with 
$N \cong \GGG{N}{n}$. 
Assume that the holonomy maps of $X_a$ and $X_b$ are 
injective, and there is a map $\ell_b \colon X_b \to K(N,1)$  
inducing an isomorphism $\GGG{G_b}{n}\isom N$.
We then show in Theorems \ref{thm:ext-ryb} and \ref{thm:main} 
that there is an isomorphism
\begin{equation}
\label{eq:fsub}
\begin{tikzcd}[column sep=20pt]
f_{n+1}\colon \GGG{G_a}{n+1} \ar[r, "\cong"] & \GGG{G_b}{n+1}
\end{tikzcd}
\end{equation}
extending $\ov{g}_2$
if and only if there is a map $\ell_a \colon X_a \to K(N,1)$ inducing 
an isomorphism $\GGG{G_a}{n} \isom N$, and 
a splitting $\sigma$ of the exact sequence \eqref{eq:split-exact} 
such that the following diagram  commutes:
\begin{equation}
\label{diagram-triangle}
\begin{tikzcd}[column sep=1.5pc, row sep=1.6pc]
   & \gr_n(N)\\
   {}\\
   & H_2(N) \ar[uu, "\sigma" ']\\
 H_2(X_a)\ar[rr, "\ov{g}_2", "\cong" ']
 				\ar[uuur, "\mu_a"]
 				\ar[ur, "(\ell_a)_\ast" ']
 		& & H_2(X_b)\, . \ar[uuul, "\mu_b" ']    
		\ar[ul, "(\ell_b)_\ast"]
\end{tikzcd}
\end{equation}

Analogous results in characteristic $p$ are proved in 
Theorems \ref{thm:p-ryb} and \ref{thm:main-p}. 
In the case $n=3$ the obstruction to extending the map 
$\ov{g}_2$ to an isomorphism $\GGG{G_a}{4}\isom \GGG{G_b}{4}$
is computed by generalized Massey triple products.
This result along with further results and applications will be given 
in a subsequent paper.  

\subsection{Hyperplane arrangements}
\label{subsec:hyp}

Returning now to the setting of hyperplane arrangements, let 
$\A$ be a finite set of hyperplanes in some finite-dimensional 
complex vector space.  The complement $M(\A)$, then, has the 
homotopy type of a connected, finite CW-complex. Moreover, 
the cohomology ring $H^*(M(\A))$ is torsion-free and generated 
in degree $1$, and so the holonomy map of $M(\A)$ is injective. 
Consequently, if $G=G(\A)$ is the fundamental group of the complement, 
then $\gr_3(G)\cong \h_3(G)$. 

The second nilpotent quotient of an arrangement group is 
combinatorially determined; that is, if $\A$ and $\B$ are 
two arrangements such that $L_{\le 2} (\A)\cong L_{\le 2} (\B)$, 
then $G(\A)/\Gamma_3(G(\A))\cong G(\B)/\Gamma_3(G(\B))$.
On the other hand, as previously mentioned, Rybnikov showed 
that the next nilpotent quotient, $G(\A)/\Gamma_4(G(\A))$ is not always 
determined by $L_{\le 2}(\A)$.  

The invariant that Rybnikov defined 
in \cite{Rybnikov-1,Rybnikov-2,Rybnikov-3} to prove this result 
comes from the case $n=3$ of the main result in this paper, as follows.
In \cite{Rybnikov-1,Rybnikov-2} it is further assumed that $\h_3(G)$
is torsion-free. Replacing then the modules and maps in Theorem \ref{thm:ext-ryb} 
with their $\Hom$ duals gives the result corresponding to Theorem 2.2 in 
\cite{Rybnikov-1}.  These replacements in Theorem \ref{thm:main} yield 
item 2 of Theorem 12 in \cite{Rybnikov-2}.

Particularly interesting is the class of ``decomposable" hyperplane arrangements.  
Building on work of Papadima and Suciu \cite{Papadima-Suciu-2006} and 
applying Theorem \ref{thm:main}, we prove in Theorem \ref{thm:decomposable-n} 
that, for such an arrangement $\A$, the tower of nilpotent quotients of $G(\A)$ is 
fully determined by the truncated intersection lattice $L_{\le 2}(\A)$.  Our result 
leaves open the question whether the group $G(\A)$ itself is combinatorially 
determined when $\A$ is decomposable. 

\subsection{Organization of the paper}
\label{subsec:org}

The paper is divided into three parts, of roughly equal length. 

The first part deals with the nilpotent quotients and Lie algebras 
associated to a finitely generated group $G$. 
In Section \ref{sect:lcs} we describe the tower of nilpotent 
quotients $\{G/\Gamma_n(G)\}_{n\ge 1}$, while in Section \ref{sect:gr-lie} 
we review the associated graded Lie algebra $\gr(G)$ and the Malcev Lie algebra 
$\mf{m}(G)$.  Finally, in Section \ref{sect:hlie} we discuss the holonomy Lie 
algebra $\h(G)$ and relate it to $\gr(G)$. 

In the second part we reprove and extend Rybnikov's theorem.  
We start in Section \ref{sect:h2nilp} with some preparatory 
material on group extensions, splittings, and $k$-invariants.
The main results, including an extension in characteristic $p$, 
are stated and proved in Section \ref{sect:ryb-gen}.

In the third part we apply our machinery 
to the theory of hyperplane arrangements. We start in 
Section \ref{sect:hyp-arr} with a review of the relevant 
material on the topology and combinatorics of arrangements, 
and give a quick application to Lie algebras associated to arrangement 
groups.  Finally, in Section \ref{sect:nilp-decomp} we show that the nilpotent 
quotients of decomposable arrangement groups are combinatorially 
determined. 

\section{Lower central series and Postnikov towers}
\label{sect:lcs}

In this section we discuss the lower central series and the tower of 
nilpotent quotients of a group. General references include 
the works of P.~Hall \cite{Hall},  Magnus \cite{MKS},
Stallings \cite{Stallings}, and Dwyer \cite{Dwyer-1975}.

\subsection{Lower central series}
\label{subsec:lcs}

Let $G$ be a group.  The {\em lower central series}\/ (LCS) is the 
sequence of subgroups $\{\Gamma_n(G)\}_{n\ge 1}$ 
defined inductively by $\Gamma_1(G)  = G$ and 
\begin{equation}
\label{eq:lcs-series}
\Gamma_{n+1}(G) =[ G,\Gamma_n(G)]
\end{equation}
for $n \ge 1$.  Here, if $H$ and $K$ are subgroups of $G$, then $[H,K]$ denotes the
subgroup of $G$ generated by all elements of the form $[a,b]=aba^{-1}b^{-1}$ 
for $a \in H$ and $b \in K$.  If both $H$ and $K$ are normal subgroups, 
then their commutator $[H,K]$ is again a normal subgroup. 

In our situation, the subgroups $\Gamma_n(G)$ are, in fact, characteristic 
subgroups of $G$. Moreover, the LCS filtration is 
{\em multiplicative}\/, in the sense that, for all $m, n$, 
\begin{equation}
\label{eq:lcs-mult}
[\Gamma_n(G),\Gamma_{m}(G)]\subseteq 
\Gamma_{m+n}(G).
\end{equation}

Note that $\Gamma_2(G)=[G,G]$ is the 
derived subgroup of $G$, and so $G/\Gamma_2(G)=G_{\ab}$, the 
abelianization of $G$.  Furthermore, each term $\Gamma_{n+1}(G)$ 
contains $[ \Gamma_n(G),\Gamma_n(G)]$, and thus the quotient 
group 
\begin{equation}
\label{eq:grng}
\gr_n(G):=\Gamma_n(G)/\Gamma_{n+1}(G)
\end{equation} 
is abelian.

Now let $G=F/R$ be a presentation for our group, with $F$ a free group 
and $R$ a normal subgroup. Then 
$\Gamma_n(G)=\Gamma_n(F)/\Gamma_n(F)\cap R$. 
Moreover, if $G$ is finitely generated, then so are the LCS quotients 
from \eqref{eq:grng}.
We will write $\phi_n(G)=\rank \gr_n(G)$ 
for the ranks of these groups.

For instance, if $F_k$ is the free group on $k$ generators, 
then all its LCS quotients are torsion-free, with ranks  
$\phi_n=\phi_n(F_k)$ given by $\prod_{n=1}^{\infty}(1-t^n)^{\phi_n}=
1-kt$, or, equivalently, $\phi_n=\tfrac{1}{k}\sum_{d\mid k} \mu(d) n^{k/d}$, 
where $\mu$ denotes the M\"{o}bius function.

\subsection{Nilpotent quotients}
\label{subsec:nilp-quot}
It is readily seen that $G/\Gamma_{n+1}(G)$ is a nilpotent group, 
and in fact, the maximal $n$-step nilpotent quotient of $G$.  
Letting $q_{n} \colon \Gg{n+1}\to \Gg{n}$ be the projection map, 
we obtain a tower of nilpotent groups,  
\begin{equation}
\label{eq:nilpotent tower}
\begin{tikzcd}
\cdots \ar[r] 
    & \Gg{4}  \ar[r, "q_3"]
    & \Gg{3}  \ar[r, "q_2"]
    & \Gg{2}
\end{tikzcd}.
\end{equation}

For each $n\ge 1$, we have  a central extension, 
\begin{equation}
\label{eq:extension}
\begin{tikzcd}[column sep=20pt]
0\ar[r]
	&\gr_{n}(G) \ar[r]
	& G/\Gamma_{n+1}(G) \ar[r, "q_n"]
	& G/\Gamma_n(G)\ar[r]
	& 0
\end{tikzcd}.
\end{equation}
Passing to classifying spaces, we obtain a 
commutative diagram,
\begin{equation}
\label{equation-extension}
\begin{tikzcd}[row sep=2pc, column sep=2.2pc]
& K(\Gg{n+1},1) \ar[d, "\pi_{n}"]\\
K(G,1) \ar[ur, "\psi_{n+1}"]
\ar[r, "\psi_n"]
& K(\Gg{n},1),
\end{tikzcd}
\end{equation}
where $\psi_n$ corresponds to the projection 
$G\to \Gg{n}$ and $\pi_n$ corresponds to the projection $q_n$.  
Note that $\pi_{n}$ may be viewed as the fibration with fiber 
$K(\gr_n(G),1)$ obtained as the pullback of the pathspace 
fibration with base $K(\gr_n(G),2)$ via a $k$-invariant 
\begin{equation}
\label{eq:k-inv}
\begin{tikzcd}[column sep=20pt]
\chi_n \colon K(\Gg{n},1) \ar[r]& K(\gr_n(G),2)
\end{tikzcd}.
\end{equation}

\subsection{Postnikov tower and the holonomy map}
\label{subsec:holo map}

Now let $X$ be a connected CW-complex, and let $G = \pi_1(X)$ 
be its fundamental group.
An Eilenberg--MacLane space $K(G,1)$ can be constructed
by adding to $X$ cells of dimension three or more; let 
$\iota\colon X\to K(G,1)$ be the inclusion map. 
For each $n\ge 1$,  let $h_n\colon X\to K(\Gg{n},1)$ 
be the composite $\psi_n \circ \iota$. 
This gives the following \emph{Postnikov tower}\/ of fibrations:
\vspace*{-8pt}
\begin{equation}
\label{eq:postnikov}
\begin{tikzcd}[row sep=24pt, column sep=60pt]
&\ar[d, dotted] \\
&  K(G/\G_4(G),1) \ar[d, "\pi_3"]\\
&  K(G/\G_3(G),1) \ar[d, "\pi_2"]\\
X \ar[r, "h_2" '] \ar[ur, "h_3" '] \ar[uur, "h_4"]
&  K(G/\G_2(G),1)
\end{tikzcd}
\end{equation}

We take now homology with coefficients in $\Z$. 
From the above discussion, we deduce the well-known fact that 
the map $\iota\colon X\to K(G,1)$ induces an isomorphism 
$\iota_*\colon H_1(X)\isom H_1(G)$ and an epimorphism 
$\iota_*\colon H_2(X)\surj H_2(G)$.  

Consider now the 
Lyndon--Hochschild--Serre spectral sequence
defined in \cite{Cartan},
\begin{equation}
\label{eq:lhs}
H_p(G/N;H_q(N;M)) \Rightarrow H_n(G;M)\, ,
\end{equation}
where $G$ is a group, $N$ is a normal subgroup of $G$, and 
$M$ is a $G$-module.
For the central central extension \eqref{eq:extension}, the 
$5$-term exact sequence arising from the terms of low degree 
(see e.g.~\cite[Theorem~2.1]{Stallings}) reduces to a short 
exact sequence,
\begin{equation}
\label{equation-Stallings}
\begin{tikzcd}
H_2(G) \ar[r, "(q_n)_\ast"]
	&H_2(\Gg{n}) \ar[r, "\chi_n"]
	& \gr_n(G) \ar[r]
	& 0 
\end{tikzcd},
\end{equation}
where the map $\chi_n$ corresponds to the $k$-invariant from \eqref{eq:k-inv} 
via the Universal Coefficient Theorem. Using now the surjectivity of the map 
$\iota_*\colon H_2(X)\to H_2(G)$ we obtain an exact sequence,
\begin{equation}
\label{eq:h2-gr}
\begin{tikzcd}
H_2(X) \ar[r, "(h_n)_\ast"]
	&H_2(\Gg{n}) \ar[r, "\chi_n"]
	& \gr_n(G) \ar[r]
	& 0 
\end{tikzcd},
\end{equation}
In general, the sequence in \eqref{eq:h2-gr} is natural but not split exact. 
We call the homomorphism
\begin{equation}
\label{eq:holonomy}
\begin{tikzcd}[column sep=20pt]
(h_2)_{\ast}\colon H_2(X) \ar[r]
	& H_2(\Gg{2}) \cong H_1(X) \wedge H_1(X)
\end{tikzcd}
\end{equation}
the \emph{holonomy map}\/ of $X$.  The following lemma easily follows 
from the definitions and the Universal Coefficient Theorem.

\begin{lemma}
\label{lem:cup-holo}
Suppose $H_1(X)$ is finitely generated and 
torsion-free.  Then the holonomy map of $X$ is dual to the cup-product map
\begin{equation}
\label{eq:cup}
\begin{tikzcd}[column sep=18pt]
\cup\colon H^1(X) \wedge H^1(X) \ar[r] & H^2(X)\, .
\end{tikzcd}
\end{equation}
Consequently, if the cup-product map from \eqref{eq:cup} is surjective,
the holonomy map from \eqref{eq:holonomy} is injective.
\end{lemma}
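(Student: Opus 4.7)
The plan is to pull everything back through the classifying map $h_2\colon X \to K(G_{\ab},1)$ and apply the Universal Coefficient Theorem. Since $H_1(X)$ is finitely generated and torsion-free, the abelianization $G_{\ab} \cong H_1(X)$ is free abelian of finite rank, so $K(G_{\ab},1)$ is a torus. Its integral (co)homology is then an exterior algebra on $H_1(X)$, respectively on $H^1(X)$; in particular $H_2(G_{\ab}) \cong H_1(X) \wedge H_1(X)$, which recovers the target of the holonomy map \eqref{eq:holonomy}, while $H^2(K(G_{\ab},1)) \cong H^1(X) \wedge H^1(X)$ as a ring.

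Next I would identify $h_2^\ast$ on degree two with the cup product. Since $h_2$ induces an isomorphism on $H_1$, and the UCT in degree one carries no Ext correction, the induced map $h_2^\ast\colon H^1(K(G_{\ab},1)) \to H^1(X)$ is an isomorphism. Because $h_2^\ast$ is a ring homomorphism and the cohomology of the torus is generated in degree one, the restriction of $h_2^\ast$ to $H^2$ is, after identifying $H^1(K(G_{\ab},1)) \cong H^1(X)$ via $h_2^\ast$, exactly the cup product $\cup\colon H^1(X) \wedge H^1(X) \to H^2(X)$.

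The duality claim then follows from the naturality of the Kronecker pairing, $\langle (h_2)_\ast \alpha, \beta \rangle = \langle \alpha, h_2^\ast \beta \rangle$ for $\alpha \in H_2(X)$ and $\beta \in H^2(K(G_{\ab},1))$, combined with the UCT identification $H^2(X) \cong \Hom(H_2(X),\Z)$, which is valid because $\ext(H_1(X),\Z) = 0$ by the torsion-freeness hypothesis. For the final assertion, surjectivity of the cup product dualizes under $\Hom(-,\Z)$ to an injection into $\Hom(H^1(X) \wedge H^1(X),\Z) \cong H_1(X) \wedge H_1(X)$, and this injection coincides with the holonomy map under the identifications already set up. The one subtlety I would flag as the main obstacle is the careful bookkeeping with the UCT: any torsion in $H_2(X)$ automatically lies in the kernel of $(h_2)_\ast$ since the target $H_1(X) \wedge H_1(X)$ is torsion-free, so the injectivity conclusion is strictly a statement about $H_2(X)$ modulo torsion—which is the form needed for the applications to hyperplane arrangement complements, where $H_2(X)$ is in fact torsion-free.
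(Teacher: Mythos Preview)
Your argument is correct and is exactly the approach the paper has in mind: the paper does not give a proof at all, merely remarking that the lemma ``easily follows from the definitions and the Universal Coefficient Theorem,'' and you have spelled out precisely that computation via the torus model for $K(G_{\ab},1)$, the ring-map property of $h_2^*$, and the Kronecker adjunction.

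Your flagged subtlety is a genuine observation the paper glosses over: as stated, the ``consequently'' clause is only literally true when $H_2(X)$ is torsion-free, since any torsion class in $H_2(X)$ necessarily dies in the torsion-free target $H_1(X)\wedge H_1(X)$ regardless of whether the cup product is surjective. The paper's applications (arrangement complements) all satisfy this extra hypothesis, so nothing downstream is affected, but you are right to note it.
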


As the next example shows, the converse of the last statement does not hold.

\begin{example}
\label{ex:rat-torus}
Let $X$ be the connected, $2$-dimensional CW-complex associated to 
the group $G$ with presentation $G=\langle x,y\mid x^2yx^{-2}y^{-1}=1\rangle$. 
Clearly, $H_1(X)=\Z^2$ and $H_2(X)=\Z$.  With these identifications, the holonomy 
map $(h_2)_*\colon \Z\to \Z$ is multiplication by $2$, and thus injective, 
while the cup-product map $\cup\colon \Z\to \Z$ is also multiplication 
by $2$, and thus not surjective.
\end{example}

\section{Associated graded Lie algebras and Malcev Lie algebras}
\label{sect:gr-lie}

\subsection{The associated graded Lie algebra of a group}
\label{subsec:gr}

Given a group $G$, we let $\gr(G)$ 
be the direct sum of the successive quotients of the lower 
central series of $G$; that is, 
\begin{equation}
\label{eq:lcs}
\gr(G) = \bigoplus_{n\ge 1} \Gamma_n(G)/\Gamma_{n+1}(G)\, .
\end{equation}
 The map $a\otimes b \mapsto aba^{-1}b^{-1}$
induces homomorphisms $[\, ,\, ]\colon \gr_m(G) \otimes \gr_n(G) \to \gr_{m+n}(G)$.
It is readily seen that the following ``Witt--Hall identities" hold in $G$:
\begin{equation}
\label{eq:witt-hall}
[ab, c]= {}^a [b,c]\cdot [a,c],\qquad 
[ {}^b a, [c, b]] \cdot [{}^c b, [a, c]] \cdot [{}^a c,[b, a]]= 1,
\end{equation}
where ${}^a b= aba^{-1}$.  
It follows that $\gr(G)$, endowed with the aforementioned bracket, 
has the structure of a graded Lie algebra, see for instance \cite{MKS,Serre}. 
The construction is functorial:
every group homomorphism $f\colon G\to H$ induces a morphism 
of graded Lie algebras, $\gr(f)\colon \gr(G)\to \gr(H)$.

If $F$ is a free group, then, as shown by Magnus and Witt, $\gr(F)$ 
is the free Lie algebra on the same set of generators as $F$; in particular, 
if $F=F_\ell$, then $\gr(F)=\Lie(\Z^\ell)$, the free Lie algebra of rank 
$\ell$. 

\subsection{Injective holonomy map and an exact sequence}
\label{subsec:inj-holo}

Once again, let $X$ be a path-connected space, with fundamental group 
$G=\pi_1(X)$.

\begin{theorem}
\label{theorem-split-exact}
Assume that the group $H_1(X)$ is finitely generated, torsion-free,
and the holonomy map $(h_2)_{\ast} \colon H_2(X) \to 
H_1(X) \wedge H_1(X)$ from \eqref{eq:holonomy} is injective.
For each $n \ge 2$, there is then a natural, split exact sequence
\begin{equation}
\label{equation-from-Serre}
\begin{tikzcd}[column sep=20pt]
0\ar[r]
	&\gr_{n+1}(G) \ar[r, "i"]
	& H_2(\Gg{n+1}) \ar[r, "\pi"]
	& H_2(X)\ar[r]
	& 0 \, .
\end{tikzcd}
\end{equation}
\end{theorem}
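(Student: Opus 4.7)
The plan is to reduce everything to the hypothesized injectivity of $(h_2)_\ast$ via the factorization $h_2 = p \circ h_{n+1}$, where $p \colon K(\GGG{G}{n+1},1) \to K(G_{\ab},1)$ is induced by the canonical projection $\GGG{G}{n+1} \twoheadrightarrow G_{\ab}$. Since $(h_2)_\ast = p_\ast \circ (h_{n+1})_\ast$ is injective by assumption, so is $(h_{n+1})_\ast \colon H_2(X) \to H_2(\GGG{G}{n+1})$. Combined with the exactness of \eqref{eq:h2-gr} in degree $n+1$, this already yields a short exact sequence $0 \to H_2(X) \to H_2(\GGG{G}{n+1}) \xrightarrow{\chi_{n+1}} \gr_{n+1}(G) \to 0$, and the remaining task is to split it naturally.

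The heart of the proof is identifying $\im(p_\ast)$. I would apply the 5-term Lyndon--Hochschild--Serre sequence to the extension
\begin{equation*}
1 \to \G_2(G)/\G_{n+1}(G) \to \GGG{G}{n+1} \to G_{\ab} \to 1.
\end{equation*}
Since $H_1(\GGG{G}{n+1}) \to H_1(G_{\ab})$ is an isomorphism, this sequence collapses to an exact sequence
\begin{equation*}
H_2(\GGG{G}{n+1}) \xrightarrow{p_\ast} H_2(G_{\ab}) \to H_1\bigl(\G_2(G)/\G_{n+1}(G)\bigr)_{G_{\ab}} \to 0.
\end{equation*}
The coinvariants reduce to $\gr_2(G)$: the conjugation action satisfies $gkg^{-1}k^{-1} = [g,k] \in \G_3(G)$, so passing to $G_{\ab}$-coinvariants kills (the image of) $\G_3(G)/\G_{n+1}(G)$, while $[\G_2,\G_2] \subseteq \G_4 \subseteq \G_3$ (using $n \ge 2$) ensures that abelianizing $\G_2/\G_{n+1}$ introduces nothing new. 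Naturality of the LHS spectral sequence applied to the morphism of extensions coming from $G \twoheadrightarrow \GGG{G}{n+1}$ then identifies the connecting map $H_2(G_{\ab}) \to \gr_2(G)$ with the map $\chi_2$ of \eqref{equation-Stallings} at $n=2$. Combined with exactness of \eqref{eq:h2-gr} at $n=2$, this gives $\im(p_\ast) = \ker(\chi_2) = \im\bigl((h_2)_\ast\bigr)$.

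Since $(h_2)_\ast$ is injective onto its image and that image contains $\im(p_\ast)$, the composition $\pi := (h_2)_\ast^{-1} \circ p_\ast$ is a well-defined natural homomorphism $H_2(\GGG{G}{n+1}) \to H_2(X)$ satisfying $\pi \circ (h_{n+1})_\ast = \id$. Thus $\pi$ is a natural retraction, splitting the short exact sequence above and producing a natural isomorphism $H_2(\GGG{G}{n+1}) \cong H_2(X) \oplus \gr_{n+1}(G)$. Rewriting this with the summands in the opposite order gives the claimed split exact sequence, with $i$ the inclusion of the summand complementary to $\im((h_{n+1})_\ast)$ (equivalently, the inverse of $\chi_{n+1}$ restricted to $\ker \pi$).

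The main obstacle is the coinvariant computation together with the naturality identification of the connecting homomorphism with $\chi_2$; once this is in place, the splitting drops out formally from the injectivity hypothesis and the exact sequence \eqref{eq:h2-gr}.
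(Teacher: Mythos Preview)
Your argument is correct and yields the same split exact sequence as the paper, but the route is genuinely different. The paper analyzes the Serre spectral sequence of the \emph{one-step} extension $0\to\gr_n(G)\to G/\Gamma_{n+1}(G)\to G/\Gamma_n(G)\to 0$: it identifies $E^\infty_{2,0}=\ker\chi_n=\im((h_n)_*)\cong H_2(X)$, obtains $0\to F^1\to H_2(G/\Gamma_{n+1}(G))\to H_2(X)\to 0$ from the filtration, observes that $(h_{n+1})_*$ splits it on the right, and then reads off $F^1\cong\gr_{n+1}(G)$ from \eqref{eq:h2-gr}. You instead work with the \emph{full} extension $1\to\Gamma_2(G)/\Gamma_{n+1}(G)\to G/\Gamma_{n+1}(G)\to G_{\ab}\to 1$ and its five-term sequence, compute the coinvariants directly as $\gr_2(G)$, identify the transgression with $\chi_2$, and thereby construct the retraction $\pi=(h_2)_*^{-1}\circ p_*$ explicitly. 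The two retractions actually coincide (since $p_*=(p_n)_*\circ(q_n)_*$ and $(h_2)_*=(p_n)_*\circ(h_n)_*$), so you are recovering the same splitting by a more elementary route that avoids any filtration bookkeeping. One small imprecision: the morphism of extensions you need for the naturality step is the projection $G/\Gamma_{n+1}(G)\twoheadrightarrow G/\Gamma_3(G)$ over the identity on $G_{\ab}$, not the map $G\twoheadrightarrow G/\Gamma_{n+1}(G)$; with that adjustment the identification of the transgression with $\chi_2$ goes through exactly as you describe.
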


\begin{proof}
Recall that given a fibration of CW-complexes with base $B$, fiber $F$,
and total space $E$, the filtration of $C_\ast(E)$ by the inverse images
of the skeleta of $B$ gives a homology Serre spectral 
sequence with differentials
\begin{equation}
\label{eq:serre-homology}
\begin{tikzcd}
E^r_{s,t}\ar[r, "d^r"] & E^r_{s-r,t+r-1}\, .
\end{tikzcd}
\end{equation}
Furthermore, if the fundamental group of the base acts trivially on the fibers, then
\begin{equation}
\label{eq:e2page}
E^2_{s,t} = H_s(B) \otimes H_t(F)\, .
\end{equation}
For the fibration \eqref{equation-extension}, the kernel of the
differential $d^2\colon H_2(B) \to H_1(F)$ is by equation 
\eqref{equation-Stallings} the image of $H_2(X)$ in
$H_2(\Gg{n})$.  In turn, this image can be identified 
with $H_2(X)$, since by assumption the holonomy 
map is a monomorphism. This gives an exact
sequence
\begin{equation}
\label{equation-split}
\begin{tikzcd}[column sep=20pt]
0 \ar[r] & F^1 \ar[r] &H_2(\Gg{n+1}) \ar[r] & H_2(X) \ar[r] & 0\, ,
\end{tikzcd}
\end{equation}
where $F^1$ denotes the image in $H_2(\Gg{n+1})$ 
of the inverse image of the 1-skeleton in $K(\Gg{n},1)$.
It follows that a map $(h_{n+1})_\ast$ gives a right splitting of
the exact sequence \eqref{equation-split}. The result now
follows from \eqref{eq:h2-gr}.
\end{proof}

In particular, under the above hypothesis there is a natural exact sequence
\begin{equation}
\label{equation-from-Serre-2}
\begin{tikzcd}[column sep=20pt]
0 \ar[r] 
	&\gr_3(G) \ar[r]
	& H_2(\Gg{3}) \ar[r]
	&  H_2(X) \ar[r]
	& 0\, .
\end{tikzcd}
\end{equation}
Furthermore, this sequence is split exact. We do not claim that there
is a natural splitting of the exact sequence \eqref{equation-from-Serre-2}, 
or of the other exact sequences from \eqref{equation-from-Serre}.

\subsection{Malcev completion and the Malcev Lie algebra}
\label{subsec:malcev}

In \cite{Malcev}, Malcev established a one-to-one correspondence between certain
nilpotent Lie algebras over $\Q$, and nilpotent groups over $\Q$, 
leading to the Malcev Lie algebra of a group. 
This was extended by Lazard \cite{Lazard} to groups with enough divisibility
in central series subgroups to establish a one-to-one correspondence 
between a wider class of nilpotent groups and Lie algebras. 
An important next step was taken by Quillen, who established 
in \cite{Quillen-1969} an equivalence between rational homotopy 
theory and the homotopy theory of reduced differential graded Lie algebras over 
$\Q$ with Malcev Lie algebras as the equivalent of the rational fundamental group.
This was extended by Dwyer in \cite{Dwyer-1979} to an equivalence between the
tame homotopy theory of $2$-connected spaces and differential graded
Lazard Lie algebras. 

In more detail, assume  that $G$ is a finitely generated group.  
It is then possible to replace each nilpotent quotient $N_n=\Gg{n}$ by 
$N_n \otimes \Q$, the (rationally defined) nilpotent Lie group associated to 
the discrete, torsion-free nilpotent group $N_n/{\rm tors}(N_n)$.
The corresponding inverse limit, 
\begin{equation}
\label{eq:mcomp}
\mathfrak{M}(G)=\varprojlim_n\, (\Gg{n}\otimes \Q),
\end{equation}
is a prounipotent, filtered Lie group over $\Q$, which is 
called the {\em prounipotent completion}, or {\em Malcev completion}\/ of $G$.  

Let us denote by $\mc{L}(K)$ the Lie algebra of a Lie group $K$.
The pronilpotent Lie algebra 
\begin{equation}
\label{eq:Malcev Lie}
\mf{m}(G)=\varprojlim_n \mc{L}(\Gg{n}\otimes \Q),
\end{equation}
endowed with the inverse limit filtration, is called the 
{\em Malcev Lie algebra}\/ of $G$.  
By construction, $\mf{m}(-)$ is a functor 
from the category of finitely generated groups to the category of
complete, separated, filtered Lie algebras over $\Q$.

In \cite{Quillen-1968,Quillen-1969}, 
Quillen showed that $\mf{m}(G)$ is the set of all primitive elements in
$\widehat{\Q{G}}$, the completion of the group algebra of $G$ 
with respect to the filtration by powers of the augmentation ideal, 
and that the associated graded Lie algebra of $\mf{m}(G)$ with respect to the 
inverse limit filtration is isomorphic to $\gr(G;\Q)$. Furthermore, 
the set of all group-like elements in $ \widehat{\Q{G}}$, 
with multiplication and filtration inherited from $\widehat{\Q{G}}$, 
forms a complete, filtered group isomorphic to $\mathfrak{M}(G)$.

\subsection{The Sullivan minimal model}
\label{subsec:minmodel}

In a seminal paper \cite{Sullivan}, Sullivan showed that commutative 
differential graded algebras (cdgas) of differential forms over $\Q$ 
can be used to model rational homotopy theory.  From this 
perspective the commutative differential graded algebra corresponding
to the Malcev Lie algebra of a group is obtained by taking the
free commutative differential graded algebra $\Hom$ dual to nilpotent
quotients of the Lie algebra and passing to the limit. 

More precisely, Sullivan associated to each space $X$ a 
cdga over the rationals, denoted $A_{\PL}^{*}(X)$, 
for which there is an isomorphism $H^{*}(A_{\PL}(X)) \cong H^{*}(X,\Q)$ 
under which induced homomorphisms in cohomology 
correspond. A space $X$ is said to be {\em formal}\/ if 
$A_{\PL}^{*}(X)\cong (H^{*}(X;\Q),d=0)$, i.e., Sullivan's 
algebra can be connected by a zig-zag of quasi-isomorphisms 
to the rational cohomology ring of $X$, endowed with the 
zero differential. 

A \emph{Hirsch extension}\/ (of degree $i$) is a cdga inclusion 
$(A,d)\inj (A\otimes \bigwedge(V),d)$, where 
$V$ is a $\Q$-vector space concentrated in 
degree $i$, while $\bigwedge(V)$ is the free graded-commutative 
algebra generated by $V$, and $d$ sends $V$ into $A^{i+1}$. 
A cdga $(A,d)$ is called \emph{minimal}\/ if $A$ is connected (i.e., $A^0=\Q$), 
and  the following two conditions are satisfied:
(1) $A=\bigcup_{j\geq0}A_j$, where $A_0=\Q$ and each 
$A_{j}$ is a Hirsch extension of $A_{j-1}$; 
(2) the differential is decomposable, i.e.,  
$dA\subset A^+\wedge A^+$, where $A^+=\bigoplus_{i\geq1}A^i$.
A basic result of Sullivan  \cite{Sullivan} and Morgan \cite{Morgan} asserts 
the following: Each connected cdga $(A,d)$ has a minimal model $\mathcal{M}(A)$, 
unique up to isomorphism. 

Suppose now that $X$ is a connected CW-complex with finitely many $1$-cells. 
Then the Lie algebra dual to the first stage of the minimal model associated to 
$A_{\PL}^{*}(X)$ is isomorphic to the Malcev Lie algebra $\mf{m}(\pi_1(X))$.
A finitely generated group $G$ is said to be \emph{$1$-formal}\/ 
(over $\Q$) if it has a classifying space $K(G,1)$ which is $1$-formal, 
or, equivalently, if the Malcev Lie algebra $\mf{m}(G)$ is the completion 
of a quadratic Lie algebra.  
For a comprehensive discussion of all these notions and more we refer to the 
monographs \cite{FHT,FHT2} and to the papers 
\cite{Papadima-Suciu-2009,Suciu-Wang-1}.

The next step is to look for invariants beyond rational homotopy theory. 
Chen, Fox, and Lyndon \cite{Chen-Fox-Lyndon} gave examples using 
Fox derivatives to find groups whose successive quotients in
the lower central series have torsion. Stallings \cite{Stallings} related
homological properties of a group to successive  quotients in the lower central
series and also to successive quotients in a mod $p$ descending series. 
Building on this work, Dwyer \cite{Dwyer-1975}  related Massey products
in the cohomology of a group to properties of the quotients in the lower 
central series and also to a mod $p$ central series different than the one 
used by Stallings.  In \cite{Cenkl-1980-2,Cenkl-1983,Cenkl}, Cenkl and Porter used 
a commutative algebra of differential forms to model tame homotopy 
theory and the Lazard Lie algebra completion of the fundamental group. 

Massey products were defined in \cite{Massey-1958} and applied to prove the 
Jacobi identity for Whitehead products. Porter \cite{Porter} gave a general formula for
Massey products in a commutator relators group in terms of coefficients in the 
Magnus expansions of the relators and provided applications to links.
In \cite{Matei}, Matei gave examples of complements of hyperplane arrangements 
with nonzero mod $p$ Massey products. This shows that while arrangement 
complements are formal over the 
rationals---and hence all Massey products with rational coefficients contain 
zero---they are not necessarily formal over the integers. Recently, Salvatore 
\cite{Salvatore} gave examples of configuration spaces that are not formal
over the integers.

\section{Holonomy Lie algebras}
\label{sect:hlie}

Among all the Lie algebras one can associate to a group, the 
simplest is the holonomy Lie algebra, which only depends 
on data extracted from cohomology in low degrees.  In this section 
we shed new light on the relationship between the holonomy Lie 
algebra and the associated graded Lie algebra of a group. 

\subsection{The holonomy Lie algebra of a group}
\label{subsec:holo-lie}

Let $G$ be a group, and fix a coefficient ring $\k$, which 
we will take to be either a field or the integers.  We will assume 
throughout that $H=H_1(G,\k)$ is a finitely generated $\k$-module; 
moreover, when $\k=\Z$, we will assume for simplicity that 
$H$ is torsion-free.  We let $\Lie(H)$ denote the free Lie 
algebra on the free $\k$-module 
$H$; note that $\Lie_1(H)=H$ and $\Lie_2(H)=H\wedge H$. 

Following \cite{Chen73,Kohno-83,Markl-Papadima,Papadima-Suciu-2004,Suciu-Wang-2,Suciu-Wang-1}, 
we define $\h(G,\k)$, the {\em holonomy Lie algebra}\/ of $G$, 
as the quotient of the free Lie algebra on $H_1(G,\k)$ by the Lie 
ideal generated by the image of the holonomy map, 
$(h_2)_{\ast}\colon H_2(G,\k) \to H_1(G,\k) \wedge H_1(G,\k)$:
\begin{equation}
\label{eq:holo-lie}
\h(G,\k) = \Lie (H_1(G,\k))/ \text{ideal}(\im ((h_2)_{\ast}))\, .
\end{equation}

The holonomy Lie algebra of $G$ is a quadratic Lie algebra: it is generated 
in degree $1$ by by $\h_1(G,\k)=H_1(G,\k)$, and all the relations 
are in degree $2$.  For $\k=\Z$, we simply write  $\h(G)=\h(G,\Z)$.  
Clearly, the construction is functorial: every group homomorphism 
$f\colon G\to H$ induces a morphism of graded Lie algebras, 
$\h(f)\colon \h(G,\k)\to \h(H,\k)$.  

As noted in \cite{Suciu-Wang-2}, the projection map 
$\psi_n\colon G\surj G/\Gamma_n(G)$ induces an isomorphism 
$\h(\psi_n)\colon \h(G)\isom \h(G/\Gamma_n(G))$ for all $n\ge 3$. 
In particular, the holonomy Lie algebra of  $G$ depends 
only on its second nilpotent quotient, $G/\Gamma_3 (G)$. 

In a completely analogous fashion, one may define the holonomy 
Lie algebra $\h(A)$ of a graded, graded-commutative $\k$-algebra $A$, 
provided that $A^0=\k$ and $A^1$ is finite-dimensional (and torsion-free 
if $\k=\Z$). It is readily seen that $\h(A)=\h(A^{\le 2})$. Moreover, 
if $G$ is a group as above, $\h(G)=\h(H^*(G;\k))$. In fact, if $X$ 
is any path-connected space with $G=\pi_1(X)$, then we may 
define $\h(X):=\h(H^*(X;\k))$, after which it is easily verified that 
$\h(X)\cong \h(G)$.

On a historical note, the holonomy Lie algebra of a group $G$ 
was first defined (over $\k=\Q$) by Chen in \cite{Chen73}, 
and later considered by Kohno in \cite{Kohno-83} 
in the case when $G$ is the fundamental group of the complement 
of a complex projective hypersurface.  In \cite{Markl-Papadima}, 
Markl and Papadima extended the definition of the holonomy 
Lie algebra to integral coefficients.  Further in-depth studies 
were done by Papadima--Suciu  \cite{Papadima-Suciu-2004}
and Suciu--Wang \cite{Suciu-Wang-2,Suciu-Wang-1}; in particular, 
the more general case when the group $H=H_1(G,\Z)$ is allowed 
to have torsion is treated in \cite{Suciu-Wang-2}.

\subsection{A comparison map}
\label{subsec:comparison}
Now set $\gr_n(G,\k) =\gr_n(G)\otimes \k$, and let 
$\gr(G,\k) = \bigoplus_{n\ge 1} \gr_n(G,\k)$ 
be the associated graded Lie algebra of $G$ over $\k$. 
As shown in  
\cite{Markl-Papadima,Papadima-Suciu-2004,Suciu-Wang-2,Suciu-Wang-1}, 
there is a (functorially defined) surjective morphism 
of graded Lie algebras, 
\begin{equation}
\label{eq:holo-gr}
\begin{tikzcd}[column sep=16pt]
\h(G,\k) \ar[r, two heads] & \gr(G;\k)\,,
\end{tikzcd}
\end{equation}
which restricts to isomorphisms  $\h_n(G,\k) \to \gr_n(G;\k)$ for $n\le 2$. 

The above map is an isomorphism if the group $G$ is 
$1$-formal over a field $\k$ of characteristic $0$, 
but in general it fails to be injective in degrees $n\ge 3$. 
Nevertheless, for a large class of (not necessarily 
$1$-formal) groups, the map $\h_3(G,\k) \to \gr_3(G;\k)$ 
is an isomorphism, even for $\k=\Z$.  This will be made more 
precise in Theorem \ref{thm:gr3=h3} below.

\subsection{Another exact sequence}
\label{subsec:exseq}

As before, let $h_2\colon X \to K(\Gg{2},1)$ be the continuous map 
induced by the projection of $G$ to $\Gg{2}$, and let 
$(h_2)_*\colon H_2(X) \to H_2(\Gg{2})$ be the corresponding 
holonomy map. 

\begin{theorem}
\label{thm:sss}
If $H_1(X)$ is finitely generated and torsion-free, then
there is an exact sequence
\begin{equation}
\label{eq:h3}
\begin{tikzcd}[column sep=18pt]
0 \ar[r] & \h_3(G) \ar[r] & H_2(\Gg{3}) \ar[r]
	& H_2(X)/(\ker (h_2)_\ast) \ar[r] & 0\, .
\end{tikzcd}
\end{equation}
\end{theorem}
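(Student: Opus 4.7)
The plan is to derive the sequence from the Lyndon--Hochschild--Serre homology spectral sequence of the central extension
\[
0 \to \gr_2(G) \to G/\Gamma_3(G) \to G_{\ab} \to 0,
\]
converging to $H_*(G/\Gamma_3(G))$. Since $G_{\ab}=H_1(X)$ is torsion-free, the K\"unneth formula gives $E^2_{p,q}=\wedge^p G_{\ab}\otimes H_q(\gr_2(G))$; along the diagonal $p+q=2$ this yields $E^2_{2,0}=G_{\ab}\wedge G_{\ab}$, $E^2_{1,1}=G_{\ab}\otimes\gr_2(G)$, and $E^2_{0,2}=\gr_2(G)\wedge\gr_2(G)$, using the standard identity $H_2(A)=A\wedge A$ for a finitely generated abelian group $A$.

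First I would identify the relevant $d^2$-differentials as cap product with the extension class $\alpha\colon G_{\ab}\wedge G_{\ab}\twoheadrightarrow \gr_2(G)$, i.e., the commutator map. Explicitly, $d^2$ on $E^2_{2,0}$ is $\alpha$; the Jacobi/Koszul map $d^2\colon \wedge^3 G_{\ab}\to G_{\ab}\otimes \gr_2(G)$ is $x\wedge y\wedge z\mapsto x\otimes[y,z]-y\otimes[x,z]+z\otimes[x,y]$; and $d^2\colon E^2_{2,1}\to E^2_{0,2}$ is $(x\wedge y)\otimes z\mapsto [x,y]\wedge z$, combining the cap product with $\alpha$ and the Pontryagin product on $H_*(\gr_2(G))$.

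Then I would compute the $E^3$-page along the line $p+q=2$. At $(2,0)$, $E^3_{2,0}=\ker(\alpha)=\im((h_2)_\ast)=H_2(X)/\ker((h_2)_\ast)$, using the Stallings sequence \eqref{equation-Stallings} together with the surjectivity of $H_2(X)\twoheadrightarrow H_2(G)$. At $(0,2)$, $E^3_{0,2}=0$, because every element of $\gr_2(G)$ is a commutator $[x,y]$, so $(x\wedge y)\otimes z\mapsto [x,y]\wedge z$ surjects onto $\gr_2(G)\wedge\gr_2(G)$. At $(1,1)$, exactness of the Koszul complex on the free $\Z$-module $G_{\ab}$ implies that the bracket map $G_{\ab}\otimes(G_{\ab}\wedge G_{\ab})\twoheadrightarrow \Lie_3(G_{\ab})$ has kernel equal to the Koszul image; quotienting by the image of $G_{\ab}\otimes\im((h_2)_\ast)$ then gives $E^3_{1,1}=\Lie_3(G_{\ab})/[G_{\ab},\im((h_2)_\ast)]=\h_3(G)$. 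A routine bigrading check confirms $E^\infty_{p,q}=E^3_{p,q}$ at these three positions, and reading off the induced filtration of $H_2(G/\Gamma_3(G))$ yields the desired short exact sequence.

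The main obstacle, I expect, is the precise determination of the off-diagonal differential $d^2\colon E^2_{2,1}\to E^2_{0,2}$ and the Koszul/PBW identification $E^3_{1,1}\cong \h_3(G)$. A secondary delicate point is that $\gr_2(G)$ may have torsion even when $G_{\ab}$ is torsion-free; the identity $H_2(A)=A\wedge A$ for any finitely generated abelian group $A$ keeps the computations at $(0,2)$ and $(1,1)$ intact.
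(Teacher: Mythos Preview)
Your plan is correct and follows the same spectral-sequence strategy as the paper: both analyze the homology Lyndon--Hochschild--Serre spectral sequence of the central extension $0 \to \gr_2(G) \to G/\Gamma_3(G) \to G_{\ab} \to 0$ and identify $E^\infty_{0,2}=0$, $E^\infty_{1,1}=\h_3(G)$, and $E^\infty_{2,0}=H_2(X)/\ker(h_2)_*$ exactly as you do.

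The one substantive difference is in how the $d^2$-differentials are pinned down and how possible torsion in $\gr_2(G)$ is handled. The paper first assumes $\h_2(G)$ is torsion-free and reads off the formulas for $d^2_{2,0}$, $d^2_{3,0}$, $d^2_{2,1}$ by dualizing from the \emph{cohomology} spectral sequence, whose $(E_2,d_2)$ is a commutative DGA determined by $d_2^{0,1}$; it then treats the general case by comparing with the spectral sequence for the free group $\mcf$ on a basis of $G_{\ab}$, using that the relevant $E^2$-maps are surjective to transport the formulas. Your route instead invokes the direct description of $d^2$ as cap product with the extension class followed by Pontryagin multiplication, together with $H_2(A)\cong A\wedge A$ for an arbitrary finitely generated abelian group~$A$, which handles torsion uniformly and avoids the case split. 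This is a little cleaner, but you should be ready to cite a reference for that homology $d^2$-formula in a central extension, as it is less commonly stated than its cohomological counterpart.
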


\begin{proof}
We shall make use of the homology Serre spectral sequence 
associated to the extension 
\begin{equation}
\label{eq:h3h2}
\begin{tikzcd}[column sep=18pt]
0 \ar[r] & \gr_2(G) \ar[r] & \Gg{3}\ar[r]
	& \Gg{2} \ar[r] & 0\, .
\end{tikzcd}
\end{equation}
The $E^2$ page of this spectral sequence depicted in 
diagram \eqref{eq:serre} below.
\begin{equation}
\label{eq:serre}
\begin{gathered}
\begin{sseqpage}[ homological Serre grading ]
\class(0,1)
\class(1,1)
\class(2,0)
\class(0,2)
\class(2,1)
\class(3,0)
\d ["d^{\,2}_{2,0}" { pos = 0.75, yshift = -24pt } ] 2 (2,0)
\d ["d^{\,2}_{3,0}" { pos = 0.5, xshift = 3pt, yshift = -3pt } ] 2 (3,0)
\d  ["d^{\,2}_{2,1}" ]  2 (2,1)
\end{sseqpage}
\end{gathered}
\end{equation}

Assume first that $\h_2$ is torsion-free, where $\h=\h(G)$.
Our hypotheses on the abelian groups $\Gg{2}=\h_1=H_1(X)$ and $\gr_2(G)=\h_2$ imply 
that all the terms $E^2_{p,q}=H_p(\Gg{2},H_q(\h_2))$ are finitely generated and 
torsion-free, and hence, Hom dual to the $E_2$ terms and differentials $d_2$ 
in the cohomology spectral sequence associated to extension \eqref{eq:h3h2}. 
Since the $E_2$ terms and differentials 
form a commutative differential graded algebra,
the differentials $d_2$ are determined by the differential $d_2^{\,0,1}$
dual to $d^{\,2}_{2,0}\colon H_2(\Gg{2}) \to H_1 (\h_2)$, which is given by
\begin{equation}\label{eq:sss-1}
d^{\,2}_{2,0}(x_a \wedge x_b) = - [x_a,x_b],
\end{equation}
where $[\cdot , \cdot ]$ denotes the bracket map from
$\h_1 \wedge \h_1$ to $\h_2$.
Computing the differential $d^{1,1}_2$ and $d^{0,2}_2$ and 
then taking the dual maps gives the following: 
\begin{equation}
\label{eq:sss-2}
{d^2_{3,0}}(x_a \wedge x_b \wedge x_c) 
	 = x_a \wedge [x_b,x_c] - x_b \wedge [x_a,x_c] + x_c \wedge [x_a,x_b]
\end{equation}
for $x_a, x_b, x_c \in \h_1$ and 	
\begin{equation}
\label{eq:sss-3}	
{d^2_{2,1}}(x_a \wedge x_b \wedge x_d)
 	=  - [x_a,x_b] \wedge x_d
\end{equation}
for $x_a, x_b \in \h_1$ and $x_d \in \h_2$.

 From \eqref{eq:sss-1} and \eqref{eq:sss-3} it follows that
 $E^3_{0,2} = 0$, while from \eqref{eq:sss-2} it follows that 
 $E^3_{1,1} = \h_3$.  Now note that $E^3_{2,0}$ is the kernel of the map
 $d^2_{2,0}$ in equation \eqref{eq:sss-1}. From the formula for $d^2_{2,0}$
 in \eqref{eq:sss-1} and the exact sequence \eqref{equation-Stallings}, 
 it follows that  $E^3_{2,0}$ is the image of
 $H_2(X)$ in $H_2(\GG{2})$, which is $ H_2(X)/(\ker (h_2)_\ast)$. 
 
Looking at the domains and ranges of the higher-order differentials in the spectral
 sequence, we see that since $E^3_{0,2} = 0$, it follows that
 $E^3_{p,q} = E^\infty_{p,q}$ for $p+q \le 2$.   We conclude that 
\begin{equation}
\label{eq:sss-e-infty}
E^\infty_{0,2} = 0,\qquad E^\infty_{1,1} = \h_3,\quad\text{and}\quad
E^\infty_{2,0} = H_2(X)/(\ker (h_2)_\ast)\, .
\end{equation}
Equation \eqref{eq:h3} now follows, and the proof of the lemma is complete
in the case where $\h_2(G)$ is torsion-free.

In the case where $\h_2$ has torsion, let $x_1, \ldots , x_\ell$ 
be elements in $G$ that project to a basis for $\Gg{2}$.
Set $\mcf$ equal to the free group on the generators $x_i$, 
and note that $\h_1(\mcf)=\h_1$ and $\h_2(\mcf)=\h_1\wedge \h_1$. 
The identity map of generators gives a map of central extensions,
\begin{equation}
\begin{gathered}
\begin{tikzcd}
0 \ar[r]  & \h_1 \wedge \h_1 \ar[r] \ar[d, "\lbrack\: {,} \:\rbrack" ']
	& \GGG{\mcf}{3}\ar[r] \ar[d]
	& \GGG{\mcf}{2} \ar[r] \ar[d]
	& 0\phantom{\, ,} \\
0 \ar[r] & \h_2\ar[r]
	&	\GG{3} \ar[r]
	& \GG{2} \ar[r]
	& 0\, ,
\end{tikzcd}
\end{gathered}
\end{equation}
and hence a map of the respective homology spectral sequences.
By the argument above, equations \eqref{eq:sss-1}, \eqref{eq:sss-2}, 
and \eqref{eq:sss-3} hold in the spectral sequence for $\mcf$ and hence in the
spectral sequence for $G$. Moreover, each of the maps of the $E^2$ terms
involved in these equations is onto, so it follows that the equations in
\eqref{eq:sss-e-infty} hold for $G$ as well.  This completes the proof.
\end{proof}

 \begin{remark}
\label{rem:h2-lie-gp}
If the group $\h_2=\h_2(G)$ is torsion-free, then the commutative differential graded algebra 
$(E_2,d_2)$ is the Chevalley--Eilenberg cochain complex \cite{Cartan} 
of the Lie algebra $\h/\Gamma_3(\h)$. 
Since $E^3_{p,q} = E^\infty_{p,q}$ for $p+q = 2$, 
it follows that the Lie algebra homology group $H_2(\GGG{\h}{3})$ is
isomorphic to $H_2(\Gg{3})$.
\end{remark}

\subsection{Identifying $\h_3(G)$ with $\gr_3(G)$}
\label{subsec:h3=gr3}

We are now ready to state and prove the main result of this section. 
A proof of this theorem was first sketched by Rybnikov in \cite[\S 3]{Rybnikov-2}; 
we provide here an alternate proof, with full details.

\begin{theorem}
\label{thm:gr3=h3}
Suppose $H=H_1(G;\Z)$ is a finitely-generated, free abelian group, and the
holonomy map $(h_2)_{\ast}\colon H_2(G) \to H \wedge H$ is injective.  Then 
the canonical projection $\h_3(G)\to  \gr_3(G)$ is an isomorphism.
\end{theorem}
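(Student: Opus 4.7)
My plan is to produce two short exact sequences with identical middle and right-hand terms -- one having $\gr_3(G)$ as its kernel, the other $\h_3(G)$ -- and then combine these with the canonical surjection from Section~\ref{subsec:comparison} and the Hopfian property of finitely generated abelian groups to deduce that this surjection is an isomorphism.

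First, I would take $X = K(G,1)$, so that $H_1(X) = H$ is torsion-free and finitely generated, $H_2(X) = H_2(G)$, and the holonomy map of $X$ coincides with the injective map $H_2(G) \to H \wedge H$. Applying Theorem~\ref{theorem-split-exact} with $n = 2$ gives the split exact sequence
\[
0 \longrightarrow \gr_3(G) \longrightarrow H_2(\GG{3}) \longrightarrow H_2(G) \longrightarrow 0,
\]
while Theorem~\ref{thm:sss} together with the vanishing $\ker(h_2)_\ast = 0$ gives
\[
0 \longrightarrow \h_3(G) \longrightarrow H_2(\GG{3}) \longrightarrow H_2(G) \longrightarrow 0.
\]
Both sequences arise from the homology Serre spectral sequence of the central extension $0 \to \gr_2(G) \to \GG{3} \to \GG{2} \to 0$: in each, the kernel is identified with the filtration level $F_1 = E^\infty_{1,1} \subseteq H_2(\GG{3})$, and the right-hand surjection is the corresponding edge homomorphism (which lands in $H \wedge H$ and, modulo $\im(h_2)_\ast$, recovers $H_2(G)$ under the injectivity hypothesis). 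Consequently, $\gr_3(G)$ and $\h_3(G)$ are both abstractly isomorphic to the common subgroup $F_1 \subseteq H_2(\GG{3})$, and hence to each other.

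Next, I would observe that $\GG{3}$ is a finitely generated nilpotent group, so $H_2(\GG{3})$ is finitely generated abelian; hence so are $\gr_3(G)$ and $\h_3(G)$. Composing the canonical surjection $\h_3(G) \twoheadrightarrow \gr_3(G)$ from Section~\ref{subsec:comparison} with any abstract isomorphism $\gr_3(G) \isom \h_3(G)$ produces a surjective endomorphism of the finitely generated abelian group $\h_3(G)$. By the Hopfian property, this endomorphism is an isomorphism; therefore the original canonical surjection is an isomorphism as well.

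The main obstacle is the verification that the right-hand terms and maps in the two short exact sequences really produce the same subgroup $F_1 \subseteq H_2(\GG{3})$ as kernel, rather than merely canonically isomorphic subgroups. This reduces to comparing the filtrations used in the proofs of Theorems~\ref{theorem-split-exact} and~\ref{thm:sss}, both of which arise from the Serre spectral sequence of the same central extension $0 \to \gr_2(G) \to \GG{3} \to \GG{2} \to 0$; once this identification is checked, the Hopfian argument cleanly completes the proof. As an alternative that avoids the Hopfian step, one could trace through naturality with respect to a free-group cover $\mcf \twoheadrightarrow \GG{2}$ (as in the torsion case of the proof of Theorem~\ref{thm:sss}, where $\h_3(\mcf) = \gr_3(\mcf)$ holds tautologically) to show directly that the canonical projection $\h_3(G) \to \gr_3(G)$ corresponds to the identity on $F_1$ under the two identifications, giving a five-lemma proof.
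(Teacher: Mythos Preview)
Your approach is essentially the paper's own: both arguments amount to observing that the Serre spectral sequence of the central extension $0\to\gr_2(G)\to\GG{3}\to\GG{2}\to 0$ identifies the filtration piece $F^1\subseteq H_2(\GG{3})$ with $\h_3(G)$ (via Theorem~\ref{thm:sss}) and with $\gr_3(G)$ (via Theorem~\ref{theorem-split-exact}), whence the two groups coincide. The paper simply stops there and declares ``the result follows''; you go one step further and use the Hopfian property of finitely generated abelian groups to confirm that it is the \emph{canonical} surjection $\h_3(G)\twoheadrightarrow\gr_3(G)$, and not merely some abstract isomorphism, that is bijective---a point the paper leaves implicit. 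Your alternative suggestion (tracing naturality through a free cover $\mcf\twoheadrightarrow\GG{2}$) is also sound and would give a more direct identification, but the Hopfian shortcut is perfectly adequate here.
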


\begin{proof}
Consider the homology spectral sequence of the exact sequence 
from \eqref{eq:h3h2}, whose $E^2$ page 
is pictured in diagram \eqref{eq:serre}.  As in 
the proof of Theorem \ref{theorem-split-exact}, 
let $F^1$ denote the image in $H_2(\Gg{3})$ 
of the inverse image of the $1$-skeleton of $K(\Gg{2},1)$.

The proof of Theorem \ref{thm:sss} shows that if $H$ is torsion-free, 
then $F^1 \cong \h_3(G)$.

The proof of Theorem \ref{theorem-split-exact} shows that
if $H$ is torsion-free and the holonomy map $(h_2)_{\ast}$ 
is injective, then $F^1 \cong \gr_3(G)$, and the result follows.
\end{proof}

\section{Second cohomology of nilpotent groups and associated $k$-invariants}
\label{sect:h2nilp}

The purpose of this section is to use the exact sequence in
equation \eqref{equation-from-Serre} to relate 
homomorphisms from $H_2(X)$ to $\gr_n(G)$ to
the possible $k$-invariants of the
central extension of $\Gg{n}$ to $\Gg{n+1}$ 
from \eqref{eq:extension}.
Throughout this section, homology will be taken with coefficients in $\Z$.

In general, given an exact sequence of abelian groups
\begin{equation}
\label{equation-exact}
\begin{tikzcd}[column sep=20pt]
0 \ar[r] & A \ar[r, "i"] & B \ar[r, "j"] & C \ar[r] & 0\, ,
\end{tikzcd}
\end{equation}
a map $\sigma\colon B \to A$ with $\sigma \circ i = \text{id}_A$ is called a 
\emph{left splitting} and a map $h\colon C \to B$ with $j \circ h = \text{id}_C$
is called a \emph{right splitting}. Recall the following well-known fact:
The exact sequence in \eqref{equation-exact} splits (either on the left 
or the right) if and only if $B \cong A \oplus C$. Furthermore, the direct
sum decompositions of this sort are in one-to-one correspondence with
splitting maps $B \to A$ (or $C \to B$). Moreover, as shown in the proof
of Lemma \ref{lemma-bijection} below, a splitting yields a bijection between
all splittings and the set of homomorphisms from $C$ to $A$. 

Once again, let $X$ be a path-connected space such that $H_1(X)$ is finitely generated 
and torsion-free, and such that the holonomy map $(h_2)_{\ast} \colon H_2(X) \to 
H_1(X) \wedge H_1(X)$ is injective.  Set $G=\pi_1(X)$. 
Recall from Theorem \ref{theorem-split-exact} that for $n \ge 3$ there is a 
split exact sequence
\begin{equation}
\label{equation-from-Serre-3}
\begin{tikzcd}
0\ar[r]
	&\gr_{n}(G) \ar[r, "i"]
	& H_2(\Gg{n}) \ar[r, "\pi"]
	& H_2(X)\ar[r]
	& 0
\end{tikzcd}
\end{equation}
and from equation \eqref{equation-Stallings}, the $k$-invariant
$\chi_n$ gives a splitting; that is, in the diagram
\begin{equation}
\label{equation-from-Serre-splitting}
\begin{tikzcd}
0\ar[r]
	&\gr_{n}(G) \ar[r, "i"]
	& H_2(\Gg{n}) \ar[r, "\pi"] \ar[l, bend left=22, "\chi_n"]
	& H_2(X)\ar[r] \ar[l, bend left=22, "(h_n)_\ast"]
	& 0
\end{tikzcd}
\end{equation}
the map $\chi_n \circ i$ is the identity on $\gr_n(G)$, while 
$\pi \circ (h_n)_\ast$ is the identity on $H_2(X)$ and 
$\ker \chi_n = \im (h_n)_\ast$.

\begin{lemma}
\label{lemma-bijection}
For $n \ge 3$, any homomorphism 
$\sigma\colon H_2(\Gg{n}) \to \gr_n(G)$ with $\sigma \circ i$ 
equal to the identity on $\gr_n(G)$ yields a bijection between splittings
of the exact sequence \eqref{equation-from-Serre-3} and elements in
$\Hom(H_2(X), \gr_n(G))$.
\end{lemma}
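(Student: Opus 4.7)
The plan is to exhibit an explicit bijection using the given splitting $\sigma$ as a basepoint. Define a map $\Phi$ from $\Hom(H_2(X),\gr_n(G))$ to the set of left splittings of \eqref{equation-from-Serre-3} by $\Phi(\phi) = \sigma + \phi\circ\pi$. This is well-defined: since $\pi\circ i = 0$ by exactness, we have $(\sigma + \phi\circ\pi)\circ i = \sigma\circ i + \phi\circ(\pi\circ i) = \id_{\gr_n(G)} + 0 = \id_{\gr_n(G)}$.

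For the inverse, given any left splitting $\sigma'$, the difference $\sigma'-\sigma\colon H_2(\Gg{n})\to\gr_n(G)$ vanishes on $i(\gr_n(G)) = \ker\pi$. Since $\pi$ is surjective, the universal property of the cokernel produces a unique $\phi\in\Hom(H_2(X),\gr_n(G))$ with $\phi\circ\pi = \sigma'-\sigma$; set $\Psi(\sigma')=\phi$. The identities $\Psi\circ\Phi=\id$ and $\Phi\circ\Psi=\id$ then follow, respectively, from the uniqueness of the factorization (applied to $\Phi(\phi)-\sigma=\phi\circ\pi$) and from direct substitution $\Phi(\Psi(\sigma')) = \sigma + (\sigma'-\sigma) = \sigma'$.

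The argument is a formal consequence of having a single splitting on hand: the existence of $\sigma$ is guaranteed by Theorem \ref{theorem-split-exact}, after which the set of all splittings becomes a torsor over $\Hom(H_2(X),\gr_n(G))$. There is no substantive obstacle in the proof itself. The real content of the lemma, relative to what follows, is the bookkeeping it installs: from now on, splittings of \eqref{equation-from-Serre-3} can be parametrized by homomorphisms $H_2(X)\to\gr_n(G)$, which is the form in which the possible $k$-invariants of the central extension \eqref{eq:extension} will subsequently be compared.
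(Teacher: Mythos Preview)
Your proof is correct and follows essentially the same approach as the paper: both use the fixed splitting $\sigma$ to parametrize all splittings by $\Hom(H_2(X),\gr_n(G))$, with the paper passing first to explicit direct-sum coordinates $H_2(\Gg{n})\cong \gr_n(G)\oplus H_2(X)$ while you work directly with the affine (torsor) structure via $\sigma'\mapsto \sigma'-\sigma$. The content is identical.
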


\begin{proof}
The map $\sigma$ gives an isomorphism between $H_2(\Gg{n})$ and 
$ \gr_n(G) \oplus H_2(X)$. Without loss of generality, we can assume that
via this isomorphism the inclusion $i$ and the projection $\pi$ in 
\eqref{equation-from-Serre-splitting} correspond to the maps
$\widetilde{i}$ and $\widetilde{\pi}$ in the diagram below
\begin{equation}
\label{equation-proof}
\begin{tikzcd}[column sep=22pt]
0\ar[r]
	&\gr_{n}(G) \ar[r, "\widetilde{i}"]
	& \gr_n(G) \oplus H_2(X) \ar[r, "\widetilde{\pi}"]
	& H_2(X)\ar[r]
	& 0\, ,
\end{tikzcd}
\end{equation}
where $\widetilde{i}$ is the inclusion into the first coordinate and 
$\widetilde{\pi}$ is the projection onto the second coordinate.

An element $\lambda \in \Hom(H_2(X), \gr_n(G))$ determines a splitting
of \eqref{equation-proof} as follows.
Given $\lambda$, define a map $h\colon H_2(X) \to \gr_n(G) \oplus H_2(X)$ by
$h(c) = (\lambda(c), c)$, and define 
$\chi\colon \gr_n(G) \oplus H_2(X) \to \gr_n(G)$ by 
$\chi(x, c) = x - \lambda(c)$. 

It is straightforward to check that in the diagram
\begin{equation}
\label{equation-with-splitting-maps}
\begin{tikzcd}[column sep=20pt]
0\ar[r]
	&\gr_{n}(G) \ar[r, "\widetilde{i}"]
	& \gr_n(G) \oplus H_2(X) \ar[r, "\widetilde{\pi}"]
		\ar[l, bend left=22, "\chi"]
	& H_2(X)\ar[r]
		\ar[l, bend left=22, "h"]
	& 0
\end{tikzcd}
\end{equation}
the map $\chi \circ \widetilde{i}$ is the identity on $\gr_n(G)$,
while $\widetilde{\pi} \circ h$ is the identity on $H_2(X)$ and
$\ker \chi = \im h$.
Every homomorphism $h\colon H_2(X) \to \gr_n(G) \oplus H_2(X)$ with
$\widetilde{\pi} \circ h$ equal to the identity on $H_2(X)$ has the form
$h(c) = (\lambda(c), c)$ and the lemma follows.
\end{proof}

In the context of the Postnikov tower \eqref{eq:postnikov} and the 
exact sequence in \eqref{equation-from-Serre-3}, this leads to a formula 
for the $k$-invariant of the extension \eqref{eq:extension}  
from $\Gg{n}$ to $\Gg{n+1}$ for a fixed $n \ge 3$, in terms a splitting map
$\sigma\colon H_2(\Gg{n}) \to \gr_n(G)$ and a map 
$h_n \colon X \to K(\Gg{n},1)$
corresponding to the projection of $G$ to $\Gg{n}$.

\begin{corollary}
\label{corollary-k-invariant}
With assumptions and notation as above, 
the $k$-invariant of the extension 
$0\to \gr_{2}(G) \to G/\Gamma_{3}(G) \to 
G/\Gamma_2(G) \to 0$
with respect to the direct sum decomposition given by the 
splitting $\sigma\colon H_2(\Gg{n}) \to \gr_n(G)$ is the element
\[
\chi_n \in \Hom(H_2(\Gg{n}), \gr_n(G)) \cong H^2(\Gg{n};\gr_n(G))
\]
given by the homomorphism 
$\chi_n (x,c) = x - \lambda (c)$, where $\lambda = \sigma \circ (h_n)_\ast 
\colon H_2(X)\to \gr_n(G)$.
\end{corollary}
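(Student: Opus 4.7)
The plan is to apply Lemma \ref{lemma-bijection} directly and then pin down $\chi_n$ via the two characterizing properties displayed in diagram \eqref{equation-from-Serre-splitting}. Because the exact sequence \eqref{equation-from-Serre-3} is split (Theorem \ref{theorem-split-exact}), the given left splitting $\sigma$ identifies
\[
H_2(\Gg{n})\;\cong\;\gr_n(G)\oplus H_2(X),
\]
in such a way that $i$ becomes inclusion into the first summand and $\pi$ becomes projection onto the second. Under this identification every element $y \in H_2(\Gg{n})$ is written as $y = (\sigma(y),\pi(y))$.

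First I would compute $(h_n)_\ast$ in this decomposition. From \eqref{equation-from-Serre-splitting} one has $\pi \circ (h_n)_\ast = \mathrm{id}_{H_2(X)}$, so applying the general formula $y = (\sigma(y),\pi(y))$ to $y = (h_n)_\ast(c)$ gives
\[
(h_n)_\ast(c) \;=\; \bigl(\sigma \circ (h_n)_\ast(c),\, c\bigr) \;=\; (\lambda(c),c),
\]
with $\lambda = \sigma\circ (h_n)_\ast$ as in the statement. Thus $\im(h_n)_\ast$ is identified with the graph of $\lambda$ inside $\gr_n(G)\oplus H_2(X)$.

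Next I would invoke the two defining properties of $\chi_n$ coming from the five-term sequence \eqref{equation-Stallings} together with the Universal Coefficient Theorem, namely $\chi_n\circ i = \mathrm{id}_{\gr_n(G)}$ and $\ker\chi_n = \im(h_n)_\ast$. The first property translates into $\chi_n(x,0) = x$, so additivity forces $\chi_n(x,c) = x - \mu(c)$ for some homomorphism $\mu\colon H_2(X)\to \gr_n(G)$. The second property, applied to the elements $(\lambda(c),c)\in \im(h_n)_\ast$, yields $\lambda(c) - \mu(c) = 0$ for every $c$, so $\mu = \lambda$. This gives the stated formula $\chi_n(x,c) = x - \lambda(c)$, and the identification $\Hom(H_2(\Gg{n}),\gr_n(G))\cong H^2(\Gg{n};\gr_n(G))$ is again just the Universal Coefficient Theorem (valid because $\gr_n(G)$ is the target and $H_1(\Gg{n})$ is finitely generated and torsion-free under our hypotheses).

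The argument is essentially bookkeeping within the split exact sequence \eqref{equation-from-Serre-3}, and no new ingredients are required beyond Lemma \ref{lemma-bijection} and the characterization of $\chi_n$ already recorded in \eqref{equation-from-Serre-splitting}. The only mildly subtle point is that the two conditions $\chi_n\circ i = \mathrm{id}$ and $\ker\chi_n = \im(h_n)_\ast$ determine $\chi_n$ \emph{uniquely}; this uses precisely the fact that $(\widetilde i,\widetilde\pi)$ realize $H_2(\Gg{n})$ as a direct sum, so that specifying $\chi_n$ on the two summands (by the identity and by $-\lambda$, respectively) determines it globally.
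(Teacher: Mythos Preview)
Your argument is correct and follows the same route as the paper's own proof: both use the direct-sum identification coming from $\sigma$ (as set up in Lemma~\ref{lemma-bijection}) together with the characterizing properties $\chi_n\circ i=\id$ and $\ker\chi_n=\im(h_n)_\ast$ recorded around \eqref{equation-from-Serre-splitting} to pin down $\chi_n(x,c)=x-\lambda(c)$ with $\lambda=\sigma\circ(h_n)_\ast$. The paper compresses this into a single sentence (``follows from Lemma~\ref{lemma-bijection} and the observation that $\lambda=\sigma\circ h$''), whereas you spell out the bookkeeping explicitly, but the content is the same.
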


\begin{proof}
The claim follows from 
Lemma \ref{lemma-bijection} and the observation that for 
the map $h$ in \eqref{equation-with-splitting-maps}, 
we have that $\lambda = \sigma \circ h$. 
\end{proof}

\begin{example}
\label{ex:free groups}
We illustrate the above corollary with a simple example (for a more 
general context, see Proposition \ref{prop:ms} below).  Let $X$ be 
a wedge of $\ell$ circles, so that $G=\pi_1(X)$ is isomorphic to $F_\ell$, 
the free group of rank $\ell$.  
Identifying $\GG{2}=\Z^\ell$ and $\gr_2(G)=\bigwedge^2 \Z^\ell$, 
the second nilpotent quotient $N=\Gg{3}$ fits into a central extension, 
\begin{equation}
\label{eq:free-extension}
\begin{tikzcd}[column sep=20pt]
0\ar[r]
	&\bigwedge^2 \Z^\ell \ar[r]
	& N \ar[r, "q_2"]
	& \Z^\ell \ar[r]
	& 0\, . 
\end{tikzcd}
\end{equation}
Note that $H_2(X)=0$, and so the homomorphism $\lambda \colon H_2(X)\to \Z^\ell$ 
is the zero map.  Hence, by Corollary \ref{corollary-k-invariant}, 
the extension \eqref{eq:free-extension} is classified by the $k$-invariant 
$\chi_2=\id \in \Hom (\bigwedge^2 \Z^\ell,\bigwedge^2 \Z^\ell)$.  
\end{example}

\section{Generalizations of Rybnikov's Theorem}
\label{sect:ryb-gen}

\subsection{The setup}
\label{subsec:setup}

Let $X$ be a connected CW-complex.  We will assume throughout 
that the homology group $H_1(X)$ is finitely generated and torsion-free, 
and that the holonomy map $(h_2)_{\ast} \colon H_2(X) \to H_1(X) \wedge H_1(X)$ 
is injective.

Let $G = \pi_1(X)$, and fix an integer $n\ge 2$.  
Recall from \eqref{eq:h2-gr} the exact sequence 
\begin{equation}
\label{eq:Stallings}
\begin{tikzcd}[column sep=24pt]
H_2(X) \ar[r, "(h_n)_\ast"] & H_2(G/\Gamma_n(G)) \ar[r]
				   & \gr_{n}(G) \ar[r]
				   & 0\, ,
\end{tikzcd}
\end{equation}
where the map $h_n\colon X \to K(G/\Gamma_n(G),1)$ 
is induced by the projection of $G\surj G/\Gamma_n(G)$. 
If $N$ is a nilpotent group with $N \cong N/\Gamma_n(N) \cong G/\Gamma_n(G)$,
then Theorem \ref{theorem-split-exact} gives a split exact sequence
\begin{equation}
\label{eq:split}
\begin{tikzcd}[column sep=18pt]
0 \ar[r] & \gr_{n}(G) \ar[r] 
			& H_2(N) \ar[r]
			& H_2(X) \ar[r]
			& 0 \, .
\end{tikzcd}
\end{equation} 

Now let $X_a$ and $X_b$ be two spaces as above 
and let $G_a$ and $G_b$ be the respective fundamental groups.  
Suppose there is a map $g\colon H^{\le 2}(X_b) \to H^{\le 2}(X_a)$ 
which is an isomorphism of graded rings. 
Set $\ov{g}\colon H_{\le 2}(X_a) \to H_{\le 2}(X_b)$ 
equal to the dual to $g$.  Then
\begin{enumerate}
\item \label{i5}
There is an isomorphism 
$G_a/\Gamma_3(G_a) \isom G_b/\Gamma_3(G_b)$.
\item \label{i6}
The isomorphism $\ov{g}_1\colon H_1(X_a) \to H_1(X_b)$ induces an
isomorphism
$\ov{g}_{\sharp}\colon \h_3(G_a) \to \h_3(G_b)$.
\end{enumerate}
If $f\colon G_a\to G_b$ is a group homomorphism, we will denote  
by $f_n\colon G_a/\G_n(G_a) \to G_b/\G_n(G_b)$ the induced 
homomorphisms between the respective nilpotent quotients.

\subsection{Statement and proof of the theorem}
\label{subsec:hom-ryb-proof}

We are ready now to state and proof our generalization 
of  \cite[Theorem 12]{Rybnikov-2}.

\begin{theorem}
\label{thm:ext-ryb}
With the assumptions above, fix $n\ge 3$, 
let $\sigma_{a}\colon H_2(G_a/\Gamma_n(G_a)) \to \gr_n(G_a)$
be any left splitting of the exact sequence \eqref{eq:split}, and let
$f_n\colon G_a/\Gamma_n(G_a) \to  G_b/\Gamma_n(G_b)$
be any isomorphism that extends the map $\ov{g}_1\colon 
G_a/\Gamma_2(G_a) \to  G_b/\Gamma_2(G_b)$.
The following conditions are then equivalent.
\begin{enumerate}
\item \label{rg1}
The map $\ov{g}_1$ extends to an isomorphism 
$f_{n+1}\colon G_a/\Gamma_{n+1}(G_a) \isom
G_b/\Gamma_{n+1}(G_b)$.
\item  \label{rg2}
There are liftings $h_n^{c}\colon X_c \to K(G_c/\Gamma_{n}(G_c),1)$ 
for $c = a$ and $b$ such that the following diagram commutes.
\begin{equation}
\label{eq:diagram-extended}
\begin{gathered}
\begin{tikzcd}[column sep=28pt, row sep=3.2pc]
\gr_n (G_a) \ar[r, "\ov{g}_{\sharp}", "\cong" ']
	& \gr_n(G_b)\phantom{\,.} \\
H_2(G_a/\Gamma_n (G_a)) \ar[r, "(f_n)_\ast"]
	\ar[u, "\sigma_a"]
& H_2(G_b/\Gamma_n(G_b)) \phantom{\,.}
	\ar[u, "\sigma_b" ']\\
H_2(X_a) \ar[u, "(h_n^a)_\ast"] \ar[r, "\ov{g}_2", "\cong" ']
	\ar[uu, bend left=60, "\lambda_a"]
	&	
H_2(X_b) \,.\ar[u, "(h_n^b)_\ast" ']
	\ar[uu, bend right=60, "\lambda_b" ']
\end{tikzcd}
\end{gathered}
\end{equation}
\end{enumerate}
\end{theorem}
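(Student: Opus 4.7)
The plan is to recognize both conditions as statements about the $k$-invariant of the central extension
\begin{equation*}
0 \to \gr_n(G_c) \to G_c/\Gamma_{n+1}(G_c) \to G_c/\Gamma_n(G_c) \to 0 \qquad (c=a,b),
\end{equation*}
and to deduce their equivalence from Corollary \ref{corollary-k-invariant}. Recall the classical fact that two such central extensions, over prescribed isomorphisms $f_n$ of the quotients and $\phi$ of the kernels, are compatibly isomorphic (i.e.\ fit into a square with a top map $f_{n+1}$) if and only if the $k$-invariants match: $f_n^*\chi_n^b = \phi_*\chi_n^a$ in $H^2(G_a/\Gamma_n(G_a);\gr_n(G_b))$. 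Because $H_1(G_c/\Gamma_n(G_c)) \cong H_1(X_c)$ is torsion-free, the universal coefficient theorem identifies this $H^2$ with $\Hom(H_2(G_a/\Gamma_n(G_a)),\gr_n(G_b))$, and the matching reduces to the equality $\chi_n^b \circ (f_n)_\ast = \phi \circ \chi_n^a$.

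For (2)~$\Rightarrow$~(1): Apply Corollary \ref{corollary-k-invariant} to write each $k$-invariant as $\chi_n^c(x,\gamma) = x - \lambda_c(\gamma)$ in the decomposition $H_2(G_c/\Gamma_n(G_c)) \cong \gr_n(G_c) \oplus H_2(X_c)$ induced by $\sigma_c$, with $\lambda_c = \sigma_c \circ (h_n^c)_\ast$. The top square of \eqref{eq:diagram-extended} forces the $\sigma_b$-component of $(f_n)_\ast$ to be $\ov{g}_\sharp$ on the $\gr_n(G_a)$-summand and zero on the $H_2(X_a)$-summand, while the bottom square together with the resulting outer identity $\ov{g}_\sharp\circ\lambda_a = \lambda_b\circ\ov{g}_2$ controls the remaining data. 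Substituting these formulas into $\chi_n^b \circ (f_n)_\ast$ yields $\ov{g}_\sharp \circ \chi_n^a$, so by the classification above the desired $f_{n+1}$ extending $\ov{g}_1$ exists with kernel map $\phi = \ov{g}_\sharp$.

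For (1)~$\Rightarrow$~(2): Given $f_{n+1}$ extending $\ov{g}_1$, let $\ov{g}_\sharp\colon \gr_n(G_a) \to \gr_n(G_b)$ be the restriction of $f_{n+1}$ to $\Gamma_n(G_a)/\Gamma_{n+1}(G_a)$. Choose any classifying lifts $h_{n+1}^c\colon X_c \to K(G_c/\Gamma_{n+1}(G_c),1)$ compatible via $f_{n+1}$, and set $h_n^c := \pi_n^c \circ h_{n+1}^c$. The bottom square then commutes because $f_{n+1}$ is compatible with $f_n$ under the Postnikov projections $\pi_n^c$. Finally take $\sigma_b := \ov{g}_\sharp \circ \sigma_a \circ (f_n)_\ast^{-1}$, which is a well-defined left splitting by naturality of the kernel inclusion $i_c\colon\gr_n(G_c)\inj H_2(G_c/\Gamma_n(G_c))$ under $f_{n+1}$, and is tailored so that the top square commutes.

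The principal obstacle is the $k$-invariant calculation in (2)~$\Rightarrow$~(1): under the splitting-induced coordinates, the only component of $(f_n)_\ast$ not directly pinned down by the two squares is an off-diagonal map $\gr_n(G_a)\to H_2(X_b)$. Carrying out the $\chi_n^b \circ (f_n)_\ast$ computation requires showing that this off-diagonal entry is annihilated by $\lambda_b$, which one extracts from the outer triangle identities in the diagram combined with the bottom-square relation. Tracking this cancellation explicitly is the technical heart of the proof.
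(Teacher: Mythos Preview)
Your direction $(2)\Rightarrow(1)$ is essentially the paper's argument, and in fact your ``principal obstacle'' is a non-issue. The inclusion $i_c\colon \gr_n(G_c)\hookrightarrow H_2(G_c/\Gamma_n(G_c))$ comes from the Serre spectral sequence filtration for the fibration $K(G_c/\Gamma_n,1)\to K(G_c/\Gamma_{n-1},1)$. Since $f_n$ induces a map of these fibrations (it descends to the $(n-1)$-st quotients because lower-central subgroups are characteristic), it preserves this filtration, so $(f_n)_\ast$ carries $\im i_a$ into $\im i_b$. In your coordinates this says the off-diagonal component $\gr_n(G_a)\to H_2(X_b)$ is zero outright; there is no cancellation to track. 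The paper records this in one line via naturality of the Serre spectral sequence and then reads off $(f_n)_\ast(x,y)=(\ov{g}_\sharp(x),\ov{g}_2(y))$, from which the $k$-invariant comparison is immediate.

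Your direction $(1)\Rightarrow(2)$ has a genuine gap. Condition~\eqref{rg1} gives you an isomorphism $f_{n+1}$ extending $\ov{g}_1$, but it does \emph{not} assert that $f_{n+1}$ lifts the given $f_n$; the map induced by $f_{n+1}$ on $n$-th quotients is some $e_n$, and in general $e_n\neq f_n$ (both extend $\ov{g}_1$, nothing more). Your sentence ``the bottom square then commutes because $f_{n+1}$ is compatible with $f_n$ under the Postnikov projections $\pi_n^c$'' is therefore unjustified. The paper fixes exactly this: it works with $e_n$ to produce provisional lifts $\widetilde{h}_n^a$ and $h_n^b$, then observes that $e_n^{-1}\circ f_n$ is an automorphism of $G_a/\Gamma_n(G_a)$ extending the identity on the abelianization, and uses it to twist the lift on the $a$-side so that the bottom rectangle commutes with $(f_n)_\ast$ rather than $(e_n)_\ast$. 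Without this adjustment (or an equivalent one) your construction of $h_n^a$ does not make the bottom square commute.
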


In the above diagram, the map $\bar{g}_{\sharp}$ is the restriction 
of the map $(f_n)_{\ast}$ between the respective extensions of 
type \eqref{eq:split}. 

\begin{proof}
First we show that if there is a commutative diagram 
such as the one above, then the isomorphism  
$f_n\colon G_a/\Gamma_n(G_a) \isom G_b/\Gamma_n(G_b)$  
extends to an isomorphism 
$G_a/\Gamma_{n+1}(G_a) \isom G_b/\Gamma_{n+1}(G_b)$.

From the commutativity of diagram \eqref{eq:diagram-extended}, it follows that 
$\sigma_b$ is a left splitting.  Using the direct sum decompositions 
given by the left splittings, we may define maps  
\begin{equation}
\label{eq:kappa-map}
\begin{tikzcd}
\kappa_c\colon H_2(G_c/\Gamma_n(G_c)) \cong \h_n(G_c) \oplus 
H_2(X_c) \ar[r] & \h_n(G_c)
\end{tikzcd}
\end{equation} 
for $c=a$ or $b$ by
\begin{equation}
\label{eq:kappa}
\kappa_c (x,y) = x - \lambda_c(y)\, . 
\end{equation}
Consider now the homology spectral sequences associated 
to the extensions \eqref{eq:extension} for $G=G_a$ and $G=G_b$, 
respectively.  From the naturality of the Serre spectral sequence 
and the commutativity of the aforementioned diagram, it 
follows that, with respect to the direct sum decompositions, the
map $(f_n)_\ast$ corresponds to the map 
$(x,y) \to (g_\sharp(y), \overline{g}_2(y))$.
Thus, the following diagram is commutative
\begin{equation}
\label{equation-k-invariants-2}
\begin{tikzcd}[row sep=2.8pc, column sep=28pt]
\h_n(G_a) \ar[r, "\ov{g}_\sharp"]
&\h_n(G_b) \phantom{\,.}
\\
\h_n(G_a) \oplus H_2(X_a) \ar[r, "(f_n)_\ast"] 
       \ar[u, "\kappa_a"]
	& \h_n(G_b) \oplus H_2(X_b)
	\ar[u, "\kappa_b" ']\,. 
\end{tikzcd}
\end{equation}

Let $E(\kappa_a)$ and $E(\kappa_b)$ be the central extensions with
$k$-invariants $\kappa_a$ and $\kappa_b$, respectively. Then from the 
commutativity of the diagram in \eqref{equation-k-invariants-2} it follows that
$f_n$ lifts to an isomorphism 
$\widetilde{f}_n\colon E(\kappa_a) \to E(\kappa_b)$.
On the other hand, Corollary \ref{corollary-k-invariant} implies that 
$E(\kappa_a) = K(G_a/\Gamma_{n+1}(G_a),1)$ and 
$E(\kappa_b) = K(G_b/\Gamma_{n+1}(G_b),1)$, 
and this completes the proof of the first part of the theorem.

To prove the reverse implication, assume that
a left splitting $\sigma_a\colon H_2(G_a/\Gamma_n(G_a)\to \h_n(G_a)$ 
and an isomorphism $f_n\colon G_a/\Gamma_n(G_a)\to G_b/\Gamma_n(G_b)$
are given; we will then show that there is a commutative diagram 
of the form \eqref{eq:diagram-extended}.

Let $e_{n+1}\colon G_a/\Gamma_{n+1}(G_a) \to G_b/\Gamma_{n+1}(G_b)$ 
be an isomorphism.  The first step is to prove that there is a commutative diagram
\begin{equation}
\label{equation-bottom-rectangle}
\begin{tikzcd}[row sep=3pc]
H_2(G_a/\Gamma_n(G_a)) \ar[r, "(f_n)_\ast"]	
	& H_2(G_b/\Gamma_n(G_b)) \phantom{\, .}\\
H_2(X_a)  \ar[r, "\ov{g}_2", "\cong" ']
    \ar[u, "(h_n^a)_\ast"]
	&	
H_2(X_b) \ar[u, "(h_n^b)_\ast" '] \, .
\end{tikzcd}
\end{equation}
Let $e_n$ be the isomorphism from $G_a/\Gamma_n(G_a)$ to
$G_b/\Gamma_n(G_b)$ induced by $e_{n+1}$.
Then $e_n$ gives rise to the following commutative diagram 
in the tower of nilpotent quotients
\begin{equation}
\label{eq:kggg}
\begin{tikzcd}[row sep=2.5pc]
	&K(\GGG{G_a}{n},1) \ar[r, "e_n"] \ar[d, "\pi_n^a"]
	& K(\GGG{G_b}{n},1) \ar[d, "\pi_n^b"]
	\\
	X_a \ar[r, "h_2^a"] \ar[ur, "\widetilde{h}_n^a"]
	&K(\GGG{G_a}{2},1) \ar[r, "e_2"]
	& K(\GGG{G_b}{2},1)
	& X_b \ar[l, "h_2^b" '] \ar[ul, "h_n^b" '] \, .
\end{tikzcd}
\end{equation}

Since $e_n$ and $f_n$ are both extensions of $\overline{g}$, it follows
that the automorphism 
$e_n^{-1}\circ f_n\colon \GGG{G_a}{n} \to \GGG{G_a}{n}$ is an 
extension of the identity map. This gives the following commutative
diagram
\begin{equation}
\label{eq:kggg-bis}
\begin{tikzcd}[row sep=2.5pc, column sep=1.5pc]
	&K(\GGG{G_a}{n},1) \ar[rr, "e_n^{-1}\circ f_n"]
	\ar[d, "\pi_n^a"]
	&& K(\GGG{G_a}{n},1) \ar[d, "\pi_n^a"]
	\\
	X_a \ar[r, "h_2^a" '] \ar[ur, "{h}_n^a"]
	&K(\GGG{G_a}{2},1) \ar[rr, "\id"]
	&& K(\GGG{G_a}{2},1)
	& X_a \ar[l, "h_2^a"] \ar[ul, "\widetilde{h}_n^a" '] \, .
\end{tikzcd}
\end{equation}
Putting diagrams \eqref{eq:kggg} and \eqref{eq:kggg-bis} 
together and passing to homology gives the
commutative diagram \eqref{equation-bottom-rectangle}.
Then the left splitting $\sigma_a$ determines a left splitting $\sigma_b$ such
that the upper rectangle in the diagram \eqref{eq:diagram-extended} commutes, 
and the proof of the theorem is complete.
\end{proof}

\subsection{Further refinements}
\label{subsec:refine}

The above proof shows the following: if the map 
$h_2^b\colon X_b\to K(\GGG{G_b}{2},1)$ is given, and if 
$f_n\colon G_a/\Gamma_n(G_a) \isom G_b/\Gamma_n(G_b)$ 
is an isomorphism, then there is an extension of $f_n$ 
to an isomorphism $f_{n+1}\colon \GGG{G_a}{n+1}\isom \GGG{G_b}{n+1}$
if and only if there is a lifting $h_n^a \colon X\to K(\GGG{G_a}{n},1)$ 
such that diagram \eqref{eq:diagram-extended} commutes.  
The next theorem recasts this result in a more compact fashion.

\begin{theorem} 
\label{thm:main}
With notation and assumptions as above, suppose   
$N$ is a nilpotent group with $N \cong N/\Gamma_n (N)$ and that
the map $\ell_b\colon X_b \to K(N,1)$ induces an isomorphism 
$G_b/\Gamma_n(G_b)\isom N$. 
Let $\sigma\colon H_2(N)\to \gr_n(N)$ be a splitting of the exact 
sequence \eqref{eq:split} and 
$f_n \colon \GGG{G_a}{n} \to \GGG{G_b}{n}$ an isomorphism. 
Then there is an isomorphism
\begin{equation}
\label{eq:fsub-2}
\begin{tikzcd}[column sep=20pt]
f_{n+1}\colon G_a/\Gamma_{n+1}(G_a)  \ar[r, "\cong"]
& G_b/\Gamma_{n+1}(G_b)
\end{tikzcd}
\end{equation}
extending $f_n$
if and only if there is a map $\ell_a \colon X_a \to K(N,1)$ inducing 
an isomorphism $G_a/\Gamma_n(G_a) \isom N$ such that 
the following diagram commutes:
\begin{equation}
\label{diagram-triangle-2}
\begin{tikzcd}[row sep=1.9pc]
   & \gr_n(N)\\
   {}\\
   & H_2(N) \ar[uu, "\sigma" ']\\
 H_2(X_a)\ar[rr, "\ov{g}_2", "\cong" ']
 		\ar[uuur, "\mu_a"]
 		\ar[ur, "(\ell_a)_\ast" ']
 		& & H_2(X_b)\ar[uuul, "\mu_b" ']    
		\ar[ul, "(\ell_b)_\ast"]
\end{tikzcd}
\end{equation}
\end{theorem}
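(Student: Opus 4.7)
The plan is to deduce Theorem~\ref{thm:main} from Theorem~\ref{thm:ext-ryb} by using the data $\ell_b$ and $f_n$ to simultaneously identify both $K(G_a/\Gamma_n(G_a),1)$ and $K(G_b/\Gamma_n(G_b),1)$ with the common target $K(N,1)$. Under such a double identification, the two separate lifts $h_n^a,h_n^b$ appearing in diagram~\eqref{eq:diagram-extended} collapse into a single extra datum $\ell_a$ (since $\ell_b$ is already fixed), and the triangular diagram~\eqref{diagram-triangle-2} is exactly a repackaging of~\eqref{eq:diagram-extended}.

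Concretely, I would let $\beta\colon G_b/\Gamma_n(G_b)\xrightarrow{\cong}N$ denote the isomorphism induced by $\ell_b$, and set $\alpha:=\beta\circ f_n\colon G_a/\Gamma_n(G_a)\xrightarrow{\cong}N$. Via $K(\beta^{-1},1)$ the map $\ell_b$ corresponds to a lift $h_n^b\colon X_b\to K(G_b/\Gamma_n(G_b),1)$ of $h_2^b$, and given a map $\ell_a$ inducing the iso $\alpha$ on $\pi_1/\Gamma_n$, the composition $h_n^a:=K(\alpha^{-1},1)\circ\ell_a$ is a lift of $h_2^a$. Transporting $\sigma$ via $\alpha_\ast$ and $\beta_\ast$ yields splittings $\sigma_a,\sigma_b$ of the sequence~\eqref{eq:split} for $G_a$ and $G_b$; because $\alpha=\beta\circ f_n$, the map $(f_n)_\ast$ intertwines $\sigma_a$ and $\sigma_b$, so the top rectangle of~\eqref{eq:diagram-extended} commutes automatically, and the bottom rectangle of~\eqref{eq:diagram-extended} is equivalent to the commutativity of the triangles in~\eqref{diagram-triangle-2}. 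Theorem~\ref{thm:ext-ryb} then applies in both directions: from an extension $f_{n+1}$ of $f_n$ one extracts lifts and sets $\ell_a:=K(\alpha,1)\circ h_n^a$, and conversely a commuting diagram~\eqref{diagram-triangle-2} produces lifts satisfying~\eqref{eq:diagram-extended} and hence an extension.

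The main subtle point is that a given $\ell_a$ need not a priori induce $\alpha$, only some isomorphism $\psi_a\colon G_a/\Gamma_n(G_a)\xrightarrow{\cong}N$; if $\psi_a\neq\alpha$, then reformulating through~\eqref{eq:diagram-extended} produces an extension of $\beta^{-1}\circ\psi_a$ rather than of $f_n$. The hypothesis of the theorem should therefore be read as implicitly requiring $\psi_a=\alpha$, or equivalently as allowing the freedom to replace $\ell_a$ by its composition with a self-equivalence of $K(N,1)$ representing $\alpha\circ\psi_a^{-1}\in\Aut(N)$; verifying that such an adjustment can always be made without destroying the commutativity of~\eqref{diagram-triangle-2}, and checking compatibility of the action of $\Aut(N)$ with the splitting $\sigma$, is the one technical step that requires attention, while the rest of the argument is a direct translation through the identifications $\alpha$ and $\beta$.
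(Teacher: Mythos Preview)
Your proposal is correct and follows essentially the same route as the paper: your isomorphisms $\alpha,\beta$ are precisely the paper's $e_n^a,e_n^b$ (with $e_n^a=e_n^b\circ f_n$), and your transport of $\sigma$ to $\sigma_a,\sigma_b$ together with the translation between diagrams~\eqref{eq:diagram-extended} and~\eqref{diagram-triangle-2} is exactly what the paper carries out via the ``big butterfly'' diagram~\eqref{eq:big-butterfly} and the equivalence~\eqref{eq:equivalence}. The subtlety you flag---that $\ell_a$ might induce some $\psi_a\neq\alpha$---is handled in the paper by noting that composition with the fixed $e_n^a$ sets up a bijection between liftings $\ell_a$ of $q_a$ and liftings $h_n^a$ of $h_2^a$ in diagram~\eqref{eq:square-spaces}, so the freedom in the induced isomorphism on $\pi_1/\Gamma_n$ is absorbed into the freedom already present in the choice of $h_n^a$ in Theorem~\ref{thm:ext-ryb}; the paper does not spell this out much more than you do.
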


\begin{proof}
Let $e_n^b$ be a isomorphism from $K(\GGG{G_b}{n},1)$ to
$K(N,1)$ and consider the following diagram
\begin{equation}
\label{eq:square-spaces}
\begin{tikzcd}[row sep=3.2pc, column sep=28pt]
K(\GGG{G_a}{n},1)
       \ar[r, "e_n^a"]	
	& K(N,1) \ar[d, "p_n"]
	& K(\GGG{G_b}{n},1) \phantom{\,.}
	\ar[l, "e_n^b" ']
	\\
X_a 
        \ar[u, "h_n^a"]
        \ar[r,  "q_a" ']
        \ar[ur, "\ell_a" ']
	&	K(\GGG{N}{2},1)
	& X_b \,. \ar[l, "q_b"] \ar[ul, "\ell_b"]
        \ar[u, "h_n^b" ']
\end{tikzcd}
\end{equation}
where $e_n^a = e_n^b \circ f_n$ and $q_a$ is determined by the condition
that on the first homology groups $\ov{q}_a = \ov{q}_b \circ \ov{g}_1$.
The corresponding diagram of homology groups and maps is
\begin{equation}
 \label{eq:square-homology}
\begin{tikzcd}[row sep=3.2pc, column sep=28pt]
H_2(\GGG{G_a}{n})
       \ar[r, "(e_n^a)_\ast", "\cong" ']	
       \ar[rr, bend left=20,  "(f_n)_\ast" ]
	& H_2(N)
	& H_2(\GGG{G_b}{n}) \phantom{\,.}
	\ar[l, "\cong" , "(e_n^b)_\ast" ']
	\\
H_2(X_a) 
        \ar[u, "(h_n^a)_\ast"]
        \ar[ur, "(\ell_a)_\ast" ']
        \ar[rr, "\cong" ', "\ov{g}_2" ]
	&	
	& X_b \,. \ar[ul, "(\ell_b)_\ast"]
        \ar[u, "(h_n^b)_\ast" ']
\end{tikzcd}
\end{equation}
 Since the maps $e_n^a$ and $e_n^b$ in  \eqref{eq:square-spaces} are
isomorphisms, it follows that there is a bijection between
 liftings $\ell_a$ and $h_n^a$ in \eqref{eq:square-spaces} and 
 also a bijection between liftings $\ell_b$ and $h_n^b$. Moreover,
 in \eqref{eq:square-homology}
 \begin{equation}
 \label{eq:equivalence}
 (\ell_a)_\ast = (\ell_b)_\ast \circ \ov{g}_2
 \quad \iff \quad
 (h_n^b)_\ast \circ \ov{g}_2 = (f_n)_\ast \circ (h_n^a)_\ast\, .
 \end{equation}
Consider now the diagram
\begin{equation}
\label{eq:big-butterfly}
\begin{tikzcd}[row sep=3.2pc, column sep=28pt]
\gr_n(G_a) \ar[r, "(e_n^a)_\sharp", "\cong" ']
		\ar[rr, bend left=20,  "\ov{g}_\sharp" ]
	& \gr_n(N) 
	& \gr_n(G_b)\phantom{\,.} 
			\ar[l, "\cong" , "(e_n^b)_\sharp" ']\\
H_2(G_a/\Gamma_n(G_a)) 
       \ar[r, "(e_n^a)_\ast", "\cong" ']
	\ar[u, "\sigma_a"]
	& H_2(N) \ar[u, "\sigma"]
	& H_2(G_b/\Gamma_n(G_b)) \phantom{\,.}
	\ar[u, "\sigma_b" ']
	\ar[l, "\cong" , "(e_n^b)_\ast" ']
	\\
H_2(X_a) 
        \ar[u, "(h_n^a)_\ast"] 
        \ar[rr,  "\ov{g}_2" ]
        \ar[ur, "(\ell_a)_\ast" ']
	\ar[uu, bend left=64,  "\lambda_a"]
	&	
	& H_2(X_b) \,.  \ar[ul, "(\ell_b)_\ast"]
        \ar[u, "(h_n^b)_\ast" ']
	\ar[uu, bend right=64,  "\lambda_b" ']
\end{tikzcd}
\end{equation} 
where the maps $(e_n^a)_\sharp$ and $(e_n^b)_\sharp$ are induced by
the corresponding isomorphisms of groups $e_n^a$ and $e_n^b$ and 
the splittings $\sigma_a$ and $\sigma_b$ are defined by requiring that
the top two rows in \eqref{eq:big-butterfly} form a commutative diagram.

From \eqref{eq:equivalence} and a diagram chase, it follows that
\eqref{eq:big-butterfly} commutes if and only if the corresponding 
diagram \eqref{eq:diagram-extended} commutes and also that 
\eqref{eq:big-butterfly} commutes if and only if diagram \eqref{diagram-triangle-2}
commutes with $\mu_a = (e_n^a)_\sharp \circ \sigma_a \circ (h_n^a)_\ast$
and $\mu_b = (e_n^b)_\sharp \circ \sigma_b \circ (h_n^b)_\ast$.
The desired conclusion follows.
\end{proof}

\begin{remark}
\label{rem:Ryb-comment}
In the work of Rybnikov \cite{Rybnikov-1,Rybnikov-2} it is assumed that 
the groups $\h_2$ and $\h_3$ are torsion-free. Then replacing the 
modules and maps in Theorem \ref{thm:ext-ryb} for $n=3$ with their 
$\Hom$ duals yields item 2 of Theorem 12 in \cite{Rybnikov-2}.
The result in Theorem \ref{thm:main} for $n=3$ corresponds to Theorem 2.2 in 
\cite{Rybnikov-1}.
\end{remark}

\subsection{The Stallings mod $p$ lower central series}
\label{subsec-p-lcs}

Let $G$ be a group, and let $p = 0$ or a prime.  Following 
Stallings \cite{Stallings}, define subgroups $\Gamma_n^p(G)<G$ 
as follows:
\begin{align*}
\Gamma_1^p(G) & = G\\
\Gamma_{n+1}^p(G) & =\langle gug^{-1}u^{-1}v^p : g \in G, 
								u, v\in \Gamma_n^p(G)\rangle\, ,
\end{align*}
where $\langle U \rangle$ denotes the subgroup generated by a 
subset $U\subset G$.  Then $\{ \Gamma_n^p(G)\}_{n\ge 1}$ is a 
descending central series of normal subgroups.  
For $p=0$ it is the lower central series; for $p\ne 0$ it is the 
most rapidly descending central series whose successive quotients
are vector spaces over the field of $p$ elements.
If we set 
$\gr_n^p(G) = \Gamma_n^p(G)/\Gamma_{n+1}^p(G)$, 
then $\gr^p(G):=\bigoplus_{n\ge 1} \gr_n^p(G)$ is a graded Lie algebra 
over $\Z_p$ in a natural way.

Now let $X$ be a path-connected space, and $G=\pi_1(X)$.
For the remainder of this section all homology groups are with
$\Z_p$ coefficients, where $\Z_0$ denotes the integers.
As shown in \cite{Stallings}, there is an exact sequence
\begin{equation}
\label{equation-Stallings-p}
\begin{tikzcd}[column sep=24pt]
H_2(X) \ar[r, "(h_n)_\ast"] & H_2(G/\Gamma_n^p(G)) \ar[r]
				   & \gr_{n}^p(G) \ar[r]
				   & 0\, ,
\end{tikzcd}
\end{equation}
where the map $h_n\colon X \to G/\Gamma_n^p(G)$ 
is induced by the projection of $G=\pi_1(X)$ to
$G/\Gamma_n^p(G)$. The proof of Theorem \ref{theorem-split-exact} 
extends to show that if $(h_2)_\ast$ is a monomorphism and $N$ is a 
nilpotent group with $N \cong N/\Gamma_n^p(N) \cong G/\Gamma_n^p(G)$,
then there is a split exact sequence
\begin{equation}
\label{equation-split-p}
\begin{tikzcd}[column sep=18pt]
0 \ar[r] & \gr_{n}^p(G) \ar[r] & H_2(N) \ar[r] & H_2(X) \ar[r] & 0
\end{tikzcd}
\end{equation} 
for all $n \ge 3$.

\subsection{An extension of Rybnikov's Theorem in characteristic $p$}
\label{subsec:ryb-mod-p}

Let $X_a$ and $X_b$ be path-connected spaces with 
$G_a = \pi_1(X_a)$ and $G_b = \pi_1(X_b)$.
Assume $p=0$ or $p$ a prime has been chosen; 
all homology groups in the following theorem are with
$\Z_p$ coefficients.  Assume also that $H_1(X_a)$ 
and $H_1 (X_b)$ are finitely generated, and 
the respective maps $(h_2)_\ast$ are monomorphisms.

Suppose we are given an isomorphism 
$g\colon H^{\le 2}(X_b) \to H^{\le 2}(X_a)$
of graded algebras. Set $\ov{g}\colon H_{\le 2}(X_a) \to H_{\le 2}(X_b)$ 
equal to the dual to $g$. 
Then given the exact sequences from \eqref{equation-Stallings-p} and 
\eqref{equation-split-p}, the steps in the proof of
Theorem \ref{thm:ext-ryb} apply to prove the following.
\begin{theorem} 
\label{thm:p-ryb}
With the assumptions above, fix $n\ge 3$, 
let $\sigma_{a}\colon H_2(G_a/\Gamma_n^p(G)) \to \gr_n^p(G_a)$
be any left splitting of the exact sequence \eqref{equation-split-p}, and let
$f_n\colon G_a/\Gamma_n^p(G_a) \to  G_b/\Gamma_n^p(G_b)$
be any isomorphism that extends the map $\ov{g}_1\colon 
G_a/\Gamma_2^p(G_a) \to  G_b/\Gamma_2^p(G_b)$.
The following conditions are then equivalent.
\begin{enumerate}
\item
The map $\ov{g}_1$ extends to an isomorphism 
$f_{n+1}\colon G_a/\Gamma_{n+1}^p(G_a) \isom
G_b/\Gamma_{n+1}^p(G_b)$.
\item There are liftings 
$h_n^{c}\colon X_c \to K(G_c/\Gamma_{n}^p(G_c),1)$ for $c = a$ 
and $b$ such that the following diagram commutes.
\begin{equation}
\label{eq:diagram-extended-p}
\begin{gathered}
\begin{tikzcd}[column sep=28pt, row sep=3.2pc]
\gr_n^p (G_a) \ar[r, "\ov{g}_{\sharp}", "\cong" ']
	& \gr_n^p(G_b)\phantom{\,.} \\
H_2(G_a/\Gamma_n^p (G_a)) \ar[r, "(f_n)_\ast"]
	\ar[u, "\sigma_a"]
& H_2(G_b/\Gamma_n^p(G_b)) \phantom{\,.}
	\ar[u, "\sigma_b"]\\
H_2(X_a) \ar[u, "(h_n^a)_\ast"] \ar[r, "\ov{g}_2", "\cong" ']
	\ar[uu, bend left=60, "\lambda_a"]
	&	
H_2(X_b) \,.\ar[u, "(h_n^b)_\ast" ']
	\ar[uu, bend right=60, "\lambda_b" ']
\end{tikzcd}
\end{gathered}
\end{equation}
\end{enumerate}
\end{theorem}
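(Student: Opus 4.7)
The plan is to follow the proof of Theorem \ref{thm:ext-ryb} essentially line by line, substituting the Stallings mod $p$ lower central series $\Gamma_n^p$ for $\Gamma_n$ and $\Z_p$ coefficients for $\Z$ coefficients throughout. The infrastructure needed carries over: \eqref{equation-Stallings-p} replaces the integral Stallings sequence \eqref{eq:h2-gr}, and the split exact sequence \eqref{equation-split-p} (whose proof is the same as that of Theorem \ref{theorem-split-exact}, once one observes that $\{\Gamma_n^p(G)\}$ is a central series and that $(h_2)_\ast$ is a monomorphism over $\Z_p$ by hypothesis) plays the role of \eqref{eq:split}. Each step $\Gamma_{n}^p \to \Gamma_{n+1}^p$ gives a central extension
\begin{equation*}
0 \to \gr_n^p(G) \to G/\Gamma_{n+1}^p(G) \to G/\Gamma_n^p(G) \to 0,
\end{equation*}
classified by a $k$-invariant in $H^2(G/\Gamma_n^p(G); \gr_n^p(G))$, so that Lemma \ref{lemma-bijection} and Corollary \ref{corollary-k-invariant} carry over verbatim over $\Z_p$.

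For the implication $(2) \Rightarrow (1)$, I would use the commutative diagram \eqref{eq:diagram-extended-p} together with the given left splitting $\sigma_a$ to produce a compatible left splitting $\sigma_b$ on the $b$-side, and then define homomorphisms $\lambda_c = \sigma_c \circ (h_n^c)_\ast$ for $c = a,b$. Via the resulting direct-sum decompositions $H_2(G_c/\Gamma_n^p(G_c)) \cong \gr_n^p(G_c) \oplus H_2(X_c)$, define $\kappa_c(x,y) = x - \lambda_c(y)$. By the mod $p$ analogue of Corollary \ref{corollary-k-invariant}, $\kappa_c$ is exactly the $k$-invariant classifying the central extension by $\Gamma_{n+1}^p$. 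Naturality of the homology Serre spectral sequence, combined with the commutativity of \eqref{eq:diagram-extended-p}, forces $(f_n)_\ast$ to send the decomposition on the $a$-side to the decomposition on the $b$-side by $(x,y) \mapsto (\ov{g}_\sharp(x), \ov{g}_2(y))$, whence the diagram
\begin{equation*}
\begin{tikzcd}[column sep=28pt]
\gr_n^p(G_a) \ar[r, "\ov{g}_\sharp"] & \gr_n^p(G_b) \\
\gr_n^p(G_a) \oplus H_2(X_a) \ar[u, "\kappa_a"] \ar[r, "(f_n)_\ast"] & \gr_n^p(G_b) \oplus H_2(X_b) \ar[u, "\kappa_b" ']
\end{tikzcd}
\end{equation*}
commutes. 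Lifting $f_n$ to the classifying spaces of the two $k$-invariant-determined extensions then produces the required isomorphism $f_{n+1}$.

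For the reverse implication $(1) \Rightarrow (2)$, given an extension $f_{n+1}$, I would pass to the Postnikov tower and argue exactly as in the proof of Theorem \ref{thm:ext-ryb}: any lifting $h_n^a$ of $h_2^a$ through the fibration $\pi_n^a$ exists because $G_a/\Gamma_2^p(G_a)$ agrees with abelianization in the appropriate sense, and the fact that $e_n^{-1} \circ f_n$ extends the identity on $G_a/\Gamma_2^p(G_a)$ produces the desired compatible $h_n^b$ via the mod $p$ analogues of diagrams \eqref{eq:kggg} and \eqref{eq:kggg-bis}. The given $\sigma_a$ then determines a $\sigma_b$ making the top rectangle of \eqref{eq:diagram-extended-p} commute.

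The main obstacle I anticipate is verifying the mod $p$ version of Theorem \ref{theorem-split-exact}: the Serre spectral sequence argument relies on the base acting trivially on the fiber (automatic here, since $\Gamma_n^p(G)/\Gamma_{n+1}^p(G)$ lies in the center of $G/\Gamma_{n+1}^p(G)$) and on the differential $d^2_{2,0}\colon H_2(G/\Gamma_n^p(G); \Z_p) \to H_1(\gr_n^p(G); \Z_p)$ behaving as in the integral case. One must check that the identification of the kernel of $d^2_{2,0}$ with the image of $H_2(X; \Z_p)$ survives the change of coefficients; this follows because \eqref{equation-Stallings-p} is exact with $\Z_p$ coefficients and $(h_2)_\ast$ is by assumption injective over $\Z_p$. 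Once these technicalities are dispatched, the rest of the argument is a formal transcription of the proof of Theorem \ref{thm:ext-ryb}.
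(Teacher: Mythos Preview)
Your proposal is correct and takes essentially the same approach as the paper. The paper's own proof consists of a single sentence stating that, given the exact sequences \eqref{equation-Stallings-p} and \eqref{equation-split-p}, the steps in the proof of Theorem~\ref{thm:ext-ryb} apply verbatim; your write-up simply unpacks those steps in the mod~$p$ setting, which is exactly what is intended.
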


In the above diagram, the map $\bar{g}_{\sharp}$ is the restriction 
of the map $(f_n)_{\ast}$ between the respective extensions of 
type \eqref{equation-split-p}. 
The reasoning from Theorem \ref{thm:main} generalizes to show that
Theorem \ref{thm:p-ryb} implies the following result.

\begin{theorem} 
\label{thm:main-p}
With the assumptions as in Theorem \ref{thm:p-ryb}, assume 
$N$ is a nilpotent group with $N \cong N/\Gamma_n^p (N)$ and that
$\ell_b\colon X_b \to K(N,1)$ induces an isomorphism 
$G_b/\Gamma_n^p(G_b) \isom N$. 
Let $\sigma$ be a splitting of the exact sequence \eqref{equation-split-p}.
Then there is an isomorphism
\begin{equation}
\label{eq:fsub-2-p}
\begin{tikzcd}[column sep=20pt]
f_{n+1}\colon G_a/\Gamma_{n+1}^p(G_a)  \ar[r, "\cong"]
& G_b/\Gamma_{n+1}^p(G_b)
\end{tikzcd}
\end{equation}
extending $\ov{g}_2$
if and only if there is a map $\ell_a \colon X_a \to K(N,1)$ inducing 
an isomorphism of $G_a/\Gamma_n^p(G_a) \to N$ such that 
the following diagram commutes:
\begin{equation}
\label{diagram-triangle-2-bis}
\begin{gathered}
\begin{tikzcd}[row sep=1.9pc]
   & \gr_n^p(N)\\
   {}\\
   & H_2(N) \ar[uu, "\sigma" ']\\
 H_2(X_a)\ar[rr, "\ov{g}_2", "\cong" ']
 		\ar[uuur, "\mu_a"]
 		\ar[ur, "(\ell_a)_\ast" ']
 		& & H_2(X_b)\ar[uuul, "\mu_b" ']    
		\ar[ul, "(\ell_b)_\ast"]
\end{tikzcd}
\end{gathered}
\end{equation}
\end{theorem}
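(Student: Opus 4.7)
The plan is to mirror the proof of Theorem~\ref{thm:main}, replacing the lower central series filtration $\Gamma_n$ throughout by the Stallings mod $p$ version $\Gamma_n^p$, using the split exact sequence \eqref{equation-split-p} in place of \eqref{eq:split}, and taking homology with $\Z_p$ coefficients. The mechanism is unchanged: Theorem~\ref{thm:p-ryb} provides the existence criterion for $f_{n+1}$ in terms of a commuting square involving $(f_n)_\ast$ and $\ov{g}_2$, and the task is to show that this criterion is equivalent to the commutativity of the triangle \eqref{diagram-triangle-2-bis}, whose target is the fixed space $K(N,1)$.

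First I would choose an isomorphism of Eilenberg--MacLane spaces $e_n^b \colon K(G_b/\Gamma_n^p(G_b),1) \to K(N,1)$ realizing the isomorphism induced by $\ell_b$, and then set $e_n^a = e_n^b \circ f_n$. This builds the mod $p$ analog of the space-level diagram \eqref{eq:square-spaces}, in which liftings $\ell_a$ of the composite $X_a \to K(G_a/\Gamma_2^p(G_a),1)$ correspond bijectively to liftings $h_n^a \colon X_a \to K(G_a/\Gamma_n^p(G_a),1)$ via $\ell_a = e_n^a \circ h_n^a$, and similarly on the $b$-side. The equivalence
\[
(\ell_a)_\ast = (\ell_b)_\ast \circ \ov{g}_2 \iff (h_n^b)_\ast \circ \ov{g}_2 = (f_n)_\ast \circ (h_n^a)_\ast
\]
then holds verbatim for $\Z_p$-coefficient homology, by exactly the same argument as in the proof of Theorem~\ref{thm:main}.

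Next I would transport the given splitting $\sigma \colon H_2(N) \to \gr_n^p(N)$ through the isomorphisms $(e_n^a)_\ast$ and $(e_n^b)_\ast$, together with their Lie-algebra analogs $(e_n^a)_\sharp$ and $(e_n^b)_\sharp$, to produce splittings $\sigma_a$ and $\sigma_b$ of the respective sequences of type \eqref{equation-split-p}. These assemble into the mod $p$ analog of the butterfly diagram \eqref{eq:big-butterfly}, whose top two rows commute by construction. Setting $\mu_a = (e_n^a)_\sharp \circ \sigma_a \circ (h_n^a)_\ast$ and $\mu_b = (e_n^b)_\sharp \circ \sigma_b \circ (h_n^b)_\ast$, a direct diagram chase shows that commutativity of the triangle \eqref{diagram-triangle-2-bis} is equivalent to commutativity of the square \eqref{eq:diagram-extended-p}. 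Applying Theorem~\ref{thm:p-ryb} then yields the conclusion.

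The only non-formal point is ensuring that Theorem~\ref{theorem-split-exact} adapts to the mod $p$ setting, so that \eqref{equation-split-p} is genuinely split exact; this is already granted in the discussion preceding the statement, via the mod $p$ Stallings sequence \eqref{equation-Stallings-p} combined with the $\Z_p$-coefficient Serre spectral sequence of the central extension, run exactly as in the integral case. Beyond this, the argument is a formal transcription of the proof of Theorem~\ref{thm:main}: the diagram chases are purely categorical and rely only on the existence of the split sequences and of the isomorphisms $e_n^a, e_n^b$, so I expect no additional obstacle to arise.
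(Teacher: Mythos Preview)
Your proposal is correct and follows exactly the approach the paper takes: the paper's proof of Theorem~\ref{thm:main-p} consists solely of the remark that the reasoning from Theorem~\ref{thm:main} generalizes, with Theorem~\ref{thm:p-ryb} replacing Theorem~\ref{thm:ext-ryb}, and you have spelled out precisely that transcription. Your observation that the only non-formal ingredient is the mod~$p$ split exactness of \eqref{equation-split-p}, already established in the text, is also exactly right.
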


\section{Hyperplane arrangements}
\label{sect:hyp-arr}

We now apply the tools developed in the previous sections to a class 
of spaces that arise in a combinatorial context.  These spaces---complements  
of complex hyperplane arrangements---have motivated to 
a large extent the approach taken here, 
and provide a blueprint for further 
applications.

\subsection{Complement and intersection lattice}
\label{subsec:hyp arr}

We start with a brief review of arrangement theory; for 
details and references, we refer to the monograph of 
Orlik and Terao \cite{Orlik-Terao}.

An {\em arrangement of hyperplanes}\/ is a finite set $\A$ of 
codimension-$1$ linear subspaces in a finite-dimensional, 
complex vector space $\C^{n}$.  The combinatorics of the 
arrangement is encoded in its {\em intersection lattice}, $L(\A)$,   
that is, the poset of all intersections of hyperplanes in $\A$ (also 
known as flats), ordered by reverse inclusion, and ranked by 
codimension.  For a flat $Y=\bigcap_{H\in \B}$ defined 
by a sub-arrangement $\B\subset \A$, 
we let $\rank Y=\codim Y$; we also write 
$L_k(\A)=\{Y\in L(\A) \mid \rank Y=k\}$.

The main topological invariant associated to such an arrangement $\A$ 
is its {\em complement}, $M(\A)=\C^{n}\setminus \bigcup_{H\in\A}H$.  
This is a connected, smooth complex quasi-projective variety.  Moreover,  
$M(\A)$ is a Stein manifold, and thus has the homotopy type of a 
finite CW-complex of dimension at most $n$. 

Probably the best-known example is the braid arrangement 
$\A_n$, consisting of the diagonal hyperplanes in 
$\C^{n}$.  It is readily seen that $L(\A_{n})$ is the lattice of partitions 
of $[n]=\{1,\dots,n\}$, ordered by refinement, while   
$M(\A_{n})$ is the configuration space $F(\C,n)$ of $n$ 
ordered points in $\C$.  
In the early 1960s,  Fox, Neuwirth, and Fadell showed that 
$M(\A_n)$ is a classifying space for $P_{n}$, 
the pure braid group on $n$ strings.

For a general arrangement $\A$, the cohomology ring $H^*(M(\A),\Z)$ 
was computed by Brieskorn in the early 1970s, building on pioneering 
work of Arnol'd on the cohomology ring of the braid arrangement.  
It follows from Brieskorn's work that the space $M(\A)$ is formal over $\Q$. 
Consequently, the fundamental group of the complement, 
$G(\A)=\pi_1(M(\A),x_0)$, is $1$-formal over $\Q$.

In 1980, Orlik and Solomon gave a simple combinatorial 
description of the ring $H^*(M(\A),\Z)$:  it is the quotient 
$E(\A)/I(\A)$ of the exterior algebra $E(\A)$ on classes dual to the meridians 
around the hyperplanes, modulo a certain ideal $I(\A)$ defined 
in terms of the intersection lattice of $\A$.   
In particular, the cohomology ring of the complement is 
{\em combinatorially determined}; that is to say, if $\A$ and 
$\B$ are arrangements with $L(\A)\cong L(\B)$, then 
$H^*(M(\A),\Z)\cong H^*(M(\B),\Z)$.

\subsection{Localized sub-arrangements}
\label{subsec:local}

The {\em localization}\/ of an arrangement $\A$ at a flat $Y\in L(\A)$ 
is defined as the sub-arrangement 
\begin{equation}
\label{eq:local-arr}
\A_Y=\{H\in \A\mid H\supset Y\}\, . 
\end{equation}
The inclusion $\A_Y\subset \A$ gives rise to an inclusion of complements, 
$j_Y\colon M(\A) \inj M(\A_Y)$.  Choosing a point $x_0$ sufficiently 
close to $0\in \C^{n}$, we can make it a common
basepoint for both $M(\A)$ and all 
local complements $M(\A_Y)$.  
\begin{lemma}[\cite{DSY-2016}]
\label{lem:dsy}
There exist basepoint-preserving maps $r_Y\colon M(\A_Y)\to M(\A)$ 
such that $j_Y\circ r_Y\simeq \id$ relative to $x_0$. Moreover, if 
$H\in \A$ and $H\not\supset Y$, then the 
composite $r_Y \circ j_Y \circ r_H$ is null-homotopic.
\end{lemma}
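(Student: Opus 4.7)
The plan is to use the local structure of the arrangement near the flat $Y$ to construct the retraction, then deduce the null-homotopy from a meridian argument.

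\textbf{Construction of $r_Y$.} Since the hyperplanes of $\A \setminus \A_Y$ do not contain $Y$, I would choose a point $y_Y \in Y$ lying close to $x_0$ and outside the union $\bigcup_{H \not\supset Y} H$, together with an open neighborhood $U$ of $y_Y$ in $\C^n$ disjoint from that union, so that $U \cap M(\A_Y) = U \cap M(\A)$. Fixing a linear decomposition $\C^n = Y \oplus V'$, one obtains a product decomposition $M(\A_Y) \cong Y \times M(\A'_Y)$, where $\A'_Y$ is the image in $V'$ of the central arrangement $\A_Y$. The construction of $r_Y$ then proceeds in three stages: contract the $Y$-factor onto $\{y_Y\}$; deformation-retract $M(\A'_Y)$ onto a compact subspace $K$ (possible since arrangement complements have finite CW type); and scale $K$ by a sufficiently small $\lambda > 0$ so that $\{y_Y\} \times \lambda K \subset U \cap M(\A)$. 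The resulting composite defines $r_Y \colon M(\A_Y) \to M(\A)$, and a small path-correction near $x_0$ makes it basepoint-preserving. The stage-by-stage homotopies, performed inside $M(\A_Y)$, combine to give the homotopy $j_Y \circ r_Y \simeq \id$ relative to $x_0$.

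\textbf{The null-homotopy of $r_Y \circ j_Y \circ r_H$.} When $H \not\supset Y$, observe that $M(\A_H) = \C^n \setminus H$ is homotopy equivalent to $\C^*$ via a defining functional of $H$, and hence is a $K(\Z,1)$. Consequently, any map out of $M(\A_H)$ is determined up to homotopy by its induced map on $\pi_1$. Since $H \notin \A_Y$, the hyperplane $H$ is not removed in forming $M(\A_Y)$, so the meridian generating $\pi_1(M(\A_H))$ bounds a small disk transverse to $H$ inside $M(\A_Y)$. Therefore $(j_Y \circ r_H)_* \colon \pi_1(M(\A_H)) \to \pi_1(M(\A_Y))$ is the zero homomorphism, and postcomposing with $(r_Y)_*$ gives the zero map on $\pi_1$. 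Asphericity of the source promotes this to a genuine null-homotopy.

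\textbf{Main obstacle.} The most delicate point is ensuring that the image of the retraction in the first step actually lies in $M(\A)$, not merely in the ambient $M(\A_Y)$. Since $M(\A'_Y)$ is non-compact, naive uniform scaling of the normal direction will not land inside any prescribed bounded region, and for every fixed scale $\lambda$ one can find points in $M(\A'_Y)$ whose image meets some hyperplane in $\A \setminus \A_Y$. The remedy is to first pass to a compact deformation retract $K$ of $M(\A'_Y)$ and only then apply the scaling; combining the three homotopies coherently relative to $x_0$ requires care but is routine once the basepoint $y_Y$ has been chosen generically.
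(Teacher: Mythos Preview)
The paper does not supply its own proof of this lemma; it is quoted verbatim from \cite{DSY-2016} and used as a black box. So there is no argument in the present paper to compare your proposal against.

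That said, your proposal is essentially correct and matches the standard ``shrink toward a generic point of the flat'' argument one finds in the literature. The construction of $r_Y$ via the product decomposition $M(\A_Y)\cong Y\times M(\A'_Y)$, followed by contraction of the $Y$-factor to a generic $y_Y\in Y$ and scaling of a compact retract of $M(\A'_Y)$ into a small ball about $y_Y$, is exactly the idea. Your identification of the main obstacle (the need to pass to a compact model before scaling, so that the image actually lands in $M(\A)$) is on point. The null-homotopy argument is also correct: since $M(\A_H)\simeq S^1$, pointed homotopy classes of maps out of it are detected on $\pi_1$, and the meridian $x_H$ dies under $(j_Y)_\ast$ because $H\notin\A_Y$.

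Two minor places deserve a bit more care. First, the basepoint adjustment: the paper fixes $x_0$ near $0\in\C^n$ precisely so that it lies close to every flat simultaneously, and you should use this to keep the entire homotopy $j_Y\circ r_Y\simeq\id$ genuinely relative to $x_0$, not just up to a path-correction at the end. Second, in the null-homotopy step you implicitly use that $(r_H)_\ast$ sends the generator of $\pi_1(M(\A_H))$ to the meridian $x_H\in G(\A)$ (rather than to some other preimage of the generator under $(j_H)_\ast$); this does follow from your explicit construction of $r_H$, but it is worth stating.
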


In particular, if we set $G(\A_Y)=\pi_1(M(\A_Y),x_0)$, 
then the induced homomorphisms 
$(r_Y)_{\sharp}\colon G(\A_Y) \to G(\A)$ 
are all injective.  

The inclusions $\{j_Y\}_{Y\in L(\A)}$ assemble into a map 
\begin{equation}
\label{eq:prod-map}
\begin{tikzcd}[column sep=18pt]
j\colon M(\A) \ar[r]& \prod_{Y\in L(\A)} M(\A_Y)\, . 
\end{tikzcd}
\end{equation}
The classical Brieskorn Lemma insures that the induced 
homomorphism in cohomology is an isomorphism in each 
degree $k\ge 1$.  By the K\"unneth formula, then, we have that 
\begin{equation}
\label{eq:brieskorn}
H^k(M(\A),\Z) \cong \bigoplus_{Y\in L_k(\A)} H^k(M(\A_Y),\Z)
\end{equation}
for all $k\ge 1$.  Likewise, the Orlik--Solomon 
ideal decomposes in each degree as 
\begin{equation}
\label{eq:brieskorn-OS}
I^k(\A) \cong \bigoplus_{Y\in L_k(\A)} I^k(\A_Y)\, .
\end{equation} 
It follows that the homology groups of the complement of $\A$ 
are torsion-free, with ranks given by
\begin{equation}
\label{eq:betti-arr}
b_k(M(\A))=\sum_{Y\in L_k(\A)} (-1)^k \mu(Y)\, , 
\end{equation}
where $\mu\colon L(\A) \to \Z$ is the M\"{o}bius function 
of the intersection lattice, defined inductively by $\mu(\C^{n})=1$ 
and $\mu(Y)=-\sum_{Z\supsetneq Y} \mu(Z)$.  In particular, 
$H_1(M(\A),\Z)$ is free abelian of rank equal to the cardinality 
of the arrangement, $\abs{\A}$. 

Of particular interest to us is what happens in degree $k=2$.  
For a $2$-flat $Y$, the localized sub-arrangement $\A_Y$
is a pencil of $\abs{Y}=\mu(Y)+1$ hyperplanes.
Consequently, $M(\A_Y)$ is homeomorphic to 
$(\C \setminus \{\text{$\mu(Y)$ points}\}) \times \C^* \times \C^{n-2}$,
and so $M(\A_Y)$ is a classifying space 
for the group $G(\A_Y)=F_{\mu(Y)}\times \Z$.

\subsection{The second nilpotent quotient of an arrangement group}
\label{subsec:nilp2-arr}

Let $G=G(\A)$ be an arrangement group.  Then $G$ admits a 
commutator-relators presentation of the form $G=F/R$, where 
$F$ is the free group on generators $\{x_H\}_ {H\in \A}$ 
corresponding to meridians about the hyperplanes, and 
$R\subset [F,F]$ (see for instance \cite{Cohen-Suciu-1997} as well 
as \cite{Suciu-2001} and references therein).

Plainly, the abelianization $G_{\ab}=H_1(M(\A))$ 
is the free abelian group on $\{x_H\}_ {H\in \A}$.  
On the other hand, as noted for instance in \cite{Matei-Suciu}, 
the abelian group $\gr_2(G)$ is the $\Z$-dual of $I^2(\A)$; 
in particular, $\gr_2(G)$ is also torsion-free. 
The  central extension \eqref{eq:extension} with $n=2$ 
takes a very explicit form, detailed in the next result.

\begin{proposition}[\cite{Matei-Suciu}]
\label{prop:ms}
For any arrangement $\A$, the second nilpotent quotient of 
$G(\A)$ fits into a central extension of the form 
\begin{equation}
\label{eq:2nq-arr-ext}
\begin{tikzcd}[column sep=18pt]
0 \ar[r] & (I^2(\A))^* \ar[r] & G(\A)/\Gamma_3(G(\A))  
\ar[r] & H_1(M(\A)) \ar[r] & 0
\end{tikzcd}.
\end{equation}
Furthermore, the $k$-invariant of this extension, $\chi_2 \colon 
H_2( G_{\ab} )\to \gr_2(G)$, is the dual of the inclusion map 
$I^2(\A)\inj  E^2(\A)=\bigwedge^2 G_{\ab}$.  
\end{proposition}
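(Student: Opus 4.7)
The central extension in the statement is just \eqref{eq:extension} applied with $n=2$ and $G=G(\A)$, together with the obvious identification $G(\A)/\Gamma_2(G(\A))=G_{\ab}=H_1(M(\A))$; so the real content lies in (i) identifying $\gr_2(G(\A))$ with $(I^2(\A))^\ast$ and (ii) identifying the $k$-invariant with the $\Z$-dual of $I^2(\A)\inj \bigwedge^2 G_{\ab}$. Throughout I will use the natural pairing $G_{\ab}=H_1(M(\A))\cong H^1(M(\A))^\vee$, which allows one to identify $E^2(\A)=\bigwedge^2 H^1(M(\A))$ with $(\bigwedge^2 G_{\ab})^\vee$ on dual bases of meridians and logarithmic forms, and in particular to identify $H_2(G_{\ab})=\bigwedge^2 G_{\ab}$ with $(E^2(\A))^\vee$.

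For (i), my plan is to factor through the holonomy Lie algebra. By \eqref{eq:holo-gr} the canonical map $\h_2(G(\A))\to\gr_2(G(\A))$ is an isomorphism in degree $2$, so it suffices to identify $\h_2(G(\A))=(H_1\wedge H_1)/\im((h_2)_\ast)$ with $(I^2(\A))^\ast$. Because $H^\ast(M(\A),\Z)$ is torsion-free and, by Orlik--Solomon, equals $E(\A)/I(\A)$, the cup-product map $\cup\colon H^1\wedge H^1\to H^2$ is the quotient $E^2(\A)\surj E^2(\A)/I^2(\A)$. Lemma \ref{lem:cup-holo} then realizes $(h_2)_\ast$ as the $\Z$-dual of this surjection, so $(h_2)_\ast$ is injective with image the annihilator of $I^2(\A)$ inside $(E^2(\A))^\vee=H_1\wedge H_1$; dualizing the short exact sequence $0\to I^2(\A)\to E^2(\A)\to H^2(M(\A))\to 0$ (which is termwise torsion-free, hence $\Hom(-,\Z)$-exact) then yields the desired isomorphism $\h_2(G(\A))\cong (I^2(\A))^\ast$.

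For (ii), I would apply the Stallings sequence \eqref{equation-Stallings} for $n=2$. Since the map $H_2(M(\A))\surj H_2(G(\A))$ is surjective and, by (i), $(h_2)_\ast\colon H_2(M(\A))\to H_2(G_{\ab})$ is injective, the sequence refines to a short exact sequence
\[
0\to H_2(M(\A))\xrightarrow{(h_2)_\ast}H_2(G_{\ab})\xrightarrow{\chi_2}\gr_2(G(\A))\to 0,
\]
which under the identifications of (i) is exactly the $\Z$-dual of $0\to I^2(\A)\to E^2(\A)\to H^2(M(\A))\to 0$. Consequently $\chi_2$ is identified with the dual of the inclusion $I^2(\A)\inj E^2(\A)=\bigwedge^2 G_{\ab}$, as claimed.

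The only real obstacle is keeping the dualities straight: I must verify that the image of $(h_2)_\ast$ inside $\bigwedge^2 G_{\ab}$ is precisely the annihilator of $I^2(\A)$, not merely contained in it. This comes essentially for free because all groups in sight are finitely generated and torsion-free, so $\Hom(-,\Z)$ is exact on the relevant sequences and no Ext contributions can perturb the identification; once this is checked, the rest is routine diagram chasing from Lemma \ref{lem:cup-holo}, \eqref{eq:holo-gr}, and \eqref{equation-Stallings}.
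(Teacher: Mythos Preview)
Your argument is correct. The identification of $\gr_2(G(\A))$ with $(I^2(\A))^\ast$ via the chain
\[
\gr_2(G)\cong\h_2(G)=(H_1\wedge H_1)/\im((h_2)_\ast)\cong (E^2(\A))^\vee/(I^2(\A))^{\perp}\cong (I^2(\A))^\ast
\]
is exactly right, and the computation of $\chi_2$ by dualizing the Orlik--Solomon short exact sequence $0\to I^2(\A)\to E^2(\A)\to H^2(M(\A))\to 0$ is clean. Your caution about keeping the dualities straight is well placed, but as you note, the finitely generated torsion-free hypothesis makes $\Hom(-,\Z)$ exact here, so there is no hidden $\Ext$ term.

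As for comparison: the paper does not actually prove Proposition~\ref{prop:ms}; it is stated as a result from \cite{Matei-Suciu}, with only the preceding sentence noting that $\gr_2(G)$ is the $\Z$-dual of $I^2(\A)$. Your proof is the natural one assembled from the paper's own ingredients (Lemma~\ref{lem:cup-holo}, the comparison map \eqref{eq:holo-gr} in degree $2$, and the Stallings sequence \eqref{equation-Stallings}), and is in the same spirit as the original argument in \cite{Matei-Suciu}.
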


It follows that $G/\Gamma_3(G)$ is the quotient of the free, 
$2$-step nilpotent group $F/\Gamma_3(F)$ by all commutation 
relations of the form 
\begin{equation}
\label{eq:nilp2-arr}
\Big[x_H , \prod_{\substack{K\in \A\\[2pt] K\supset Y}} x_{K}\Big] \, ,
\end{equation}
indexed by pairs of hyperplanes $H\in \A$ and flats $Y\in L_2(\A)$ 
such that $H\supset Y$ (see \cite{Rybnikov-1,Matei-Suciu}).  
From this description it is apparent 
that the second nilpotent quotient of an arrangement group is 
combinatorially determined.  More precisely, if $\A$ and $\B$ are 
two arrangements such that $L_{\le 2} (\A)\cong L_{\le 2} (\B)$, 
there is then an induced isomorphism,  
$G(\A)/\Gamma_3(G(\A))\cong G(\B)/\Gamma_3(G(\B))$.

\subsection{Holonomy Lie algebra}
\label{subsec:holo lie arr}

The holonomy Lie algebra of an arrangement $\A$ is 
defined as $\h(\A)=\h(G(\A))$.  Using the Orlik--Solomon description 
of the cohomology ring of $M(\A)$, it is readily seen that $\h(\A)$ 
is the quotient of $\LL(\A)$, the free Lie algebra 
on variables $\{x_H\}_{H\in \A}$, modulo the ideal  generated 
by relations arising from the rank $2$ flats:
\begin{equation}
\label{eq:holo arr}
\h(\A)=\LL( \A)\Big\slash \text{ideal}\, \Big\{
\Big[x_H , \sum_{\substack{K\in \A\\[2pt] K\supset Y}} x_{K}\Big] \:\Big| \:
\text{$H\in \A$, $Y\in L_2(\A)$, and $H\supset Y$} \Big. \Big\}  \Big. .
\end{equation}
By construction, this is a quadratic Lie algebra which depends solely 
on the ranked poset $L_{\le 2} (\A)$.  More precisely, if $\A$ and $\B$ 
are two arrangements such that $L_{\le 2} (\A)\cong L_{\le 2} (\B)$, 
there is then an induced isomorphism $\h(\A)\cong \h(\B)$.

As shown by Kohno \cite{Kohno-83} (based on foundational work by
Sullivan \cite{Sullivan} and Morgan \cite{Morgan}), the associated graded
Lie algebra $\gr(G(\A))$ and the holonomy Lie algebra $\h(\A)$
are rationally isomorphic:
\begin{equation}
\label{kohno}
 \h(\A)\otimes \Q\cong \gr(G(\A))\otimes \Q.
\end{equation}
In \cite{Falk-1988}, Falk sketched the construction of a $1$-minimal 
model for $M(\A)$ and used this to show that the rank of 
$\gr_3(G(\A))$---now sometimes known as the 
``Falk invariant" of the arrangement---is equal to the nullity  
of the multiplication map $E^1(\A)\otimes I^2(\A) \to E^3(\A)$ 
over $\Q$.  Further information on the ranks of the LCS quotients 
$\gr_n(G(\A))$ can be found in \cite{Schenck-Suciu-2002}.

At the integral level, there is a surjective
Lie algebra map, $\Psi\colon \h(\A)\surj \gr(G(\A))$, 
such that $\Psi\otimes \Q$ is an isomorphism.  In general, 
there exist arrangements for which the map $\Psi$ is not injective.  
Nevertheless, as a consequence of Theorem \ref{thm:gr3=h3}
and the preceding discussion, we have the following result. 

\begin{theorem}
\label{thm:gr3h3-arr}
For any arrangement $\A$, the map 
$\Psi_3\colon \h_3(\A)\to  \gr_3(G(\A))$ is an isomorphism.
\end{theorem}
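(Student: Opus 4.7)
The plan is to verify that the hypotheses of Theorem \ref{thm:gr3=h3} hold for the arrangement group $G=G(\A)$, applied to the space $X=M(\A)$, and then invoke that theorem directly.

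First I would record the two standing hypotheses. The first homology group $H_1(M(\A);\Z)$ is free abelian of rank $|\A|$, as follows from the Orlik--Solomon description (or equivalently from \eqref{eq:betti-arr} in rank $1$), so $H=H_1(G;\Z)$ is finitely generated and torsion-free. For the injectivity hypothesis on the holonomy map, I would appeal to Lemma \ref{lem:cup-holo}: since $H^*(M(\A);\Z)$ is the Orlik--Solomon algebra, which is generated in degree $1$, the cup product $\cup\colon H^1(M(\A))\wedge H^1(M(\A))\to H^2(M(\A))$ is surjective, and the lemma yields that the holonomy map $(h_2)_{\ast}\colon H_2(M(\A))\to H_1(M(\A))\wedge H_1(M(\A))$ is injective.

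With both hypotheses in place, Theorem \ref{thm:gr3=h3} applies to $G=G(\A)$ and produces an isomorphism $\h_3(G(\A))\isom \gr_3(G(\A))$. To conclude it only remains to identify $\h_3(\A)$ with $\h_3(G(\A))$: by the convention of Section \ref{subsec:holo-lie}, the holonomy Lie algebra $\h(X)$ of a space is defined as $\h(H^{\ast}(X;\k))$, and for $X=M(\A)$ this coincides with the combinatorial Lie algebra $\h(\A)$ presented in \eqref{eq:holo arr}, while also equaling $\h(G(\A))$. Under these identifications the canonical projection $\h(\A)\to\gr(G(\A))$ is the map $\Psi$ from the preceding discussion, so $\Psi_3$ is the desired isomorphism.

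In short, no new work is required beyond checking the two combinatorial facts about $M(\A)$ (torsion-free $H_1$ and degree-one generation of the cohomology ring), both of which are standard consequences of the Orlik--Solomon presentation. There is no real obstacle here; the substantive content has already been packaged into Theorem \ref{thm:gr3=h3}, and the point of this corollary is to observe that arrangement complements are an immediate and important instance of that general framework.
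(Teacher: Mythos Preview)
Your proposal is correct and follows exactly the route the paper takes: the theorem is stated there as an immediate consequence of Theorem~\ref{thm:gr3=h3} together with the preceding discussion, which records precisely the two facts you verify (torsion-free $H_1$ and injective holonomy map via surjectivity of the cup product in the Orlik--Solomon algebra). The only detail worth making explicit is that injectivity of the holonomy map of the \emph{space} $M(\A)$ forces the surjection $H_2(M(\A))\surj H_2(G(\A))$ to be an isomorphism, so the hypothesis of Theorem~\ref{thm:gr3=h3} (which is phrased in terms of $H_2(G)$) is indeed satisfied.
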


Consequently, the group $\gr_3(G(\A))$ is combinatorially determined; 
that is, if $\A$ and $\B$ are two arrangements 
such that $L_{\le 2} (\A)\cong L_{\le 2} (\B)$, then 
$\gr_3(G(\A))\cong \gr_3(G(\B))$.  

On the other hand, 
as first noted in \cite{Suciu-2001}, there exist arrangements 
$\A$ for which $\gr_k(G(\A))$ has non-zero torsion 
for some $k>3$.  This naturally raised the question whether 
such torsion in the LCS quotients of arrangement groups is 
combinatorially determined.  The question was recently 
answered by Artal Bartolo, Guerville-Ball\'{e}, and Viu-Sos 
\cite{Bartolo}, who produced a pair of arrangements 
$\A$ and $\B$ with $L_{\le 2} (\A)\cong L_{\le 2} (\B)$, yet 
$\gr_4(G(\A))\not\cong \gr_4(G(\B))$; the difference (detected 
by computer-aided computation) lies in the $2$-torsion of the 
respective groups. 

\section{Decomposable arrangements and nilpotent quotients}
\label{sect:nilp-decomp}

We conclude with an in-depth study of a particularly nice class of 
hyperplane arrangements.  Building on work of Papadima and 
Suciu \cite{Papadima-Suciu-2006}, 
we show that the tower of nilpotent quotients of the fundamental 
group of the complement of a decomposable arrangement is fully 
determined by the intersection lattice.

\subsection{Decomposable arrangements}
\label{subsec:decomp}

Let $\A$ be an arrangement. As we saw in \S\ref{subsec:local},  
for each $2$-flat $Y\in L_2(\A)$, the group $G(\A_Y)$ is isomorphic to 
$F_{\mu(Y)}\times \Z$; hence, $\gr(G(\A_Y)) \cong \LL_{\mu(Y)} \times \LL_1$. 
Furthermore, from the defining relations \eqref{eq:holo arr}, we infer that 
$\h(\A_Y)\cong\gr(G(\A_Y))$.

Let $j$ be the map from \eqref{eq:prod-map}. Projecting onto the factors 
corresponding to rank $2$ flats we obtain a map 
\begin{equation}
\label{eq:prod-map-2}
\begin{tikzcd}[column sep=18pt]
j\colon M(\A) \ar[r]& \prod_{Y\in L_2(\A)} M(\A_Y)\, .
\end{tikzcd}
\end{equation}
The induced homomorphism on fundamental 
groups, 
\[
j_{\sharp}\colon G(\A) \to \prod_{Y\in L_2(\A)} G(\A_Y),
\] 
defines a morphism of graded Lie algebras,
\begin{equation}
\label{eq:pimap}
\begin{tikzcd}[column sep=18pt]
\h(j_{\sharp}) \colon \h(\A)\ar[r] &
\prod_{Y\in L_2(\A)} \h(\A_Y)\, .
\end{tikzcd}
\end{equation}

\begin{proposition}[\cite{Papadima-Suciu-2006}]
\label{prop:PS-holo-prod}
For any arrangement $\A$, the homomorphism 
\[
\h_n(j_{\sharp}) \colon \h_n(\A)\to \prod_{Y\in L_2(\A)} \h_n(\A_Y)
\]
is a surjection for $n\ge 3$ and an isomorphism for $n=2$.
\end{proposition}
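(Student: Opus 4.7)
The plan is to compute $\h(j_\sharp)$ on generators, use a combinatorial observation about rank-$2$ flats to establish surjectivity in all degrees $n \ge 2$, and then invoke Brieskorn's decomposition to upgrade surjectivity to a full isomorphism in degree $n = 2$.

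First I would observe that because $M(\A) \subset M(\A_Y)$ and any sufficiently small meridian around a hyperplane $H \notin \A_Y$ bounds a disk in $M(\A_Y)$, the induced map $(j_Y)_\sharp\colon G(\A) \to G(\A_Y)$ sends the meridian $x_H$ to itself when $H \in \A_Y$ and to the identity otherwise. On holonomy Lie algebras this means $x_H \mapsto x_H$ or $0$ according as $H$ contains $Y$ or not. The key combinatorial ingredient is the following: \emph{for distinct $Y, Y' \in L_2(\A)$, the intersection $\A_Y \cap \A_{Y'}$ contains at most one hyperplane}, because any two distinct hyperplanes determine a unique rank-$2$ flat, namely their pairwise intersection.

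Next I would prove surjectivity for all $n \ge 2$ by a lifting construction. Each $\h_n(\A_Y)$ is generated, as an abelian group, by iterated brackets $\omega = [x_{H_1},[x_{H_2},\ldots,x_{H_n}]\cdots]$ with all $H_i \in \A_Y$. Writing the same bracket expression inside $\h(\A)$ produces an element $\tilde\omega \in \h_n(\A)$; this lift is well-defined on Lie-algebra classes because the $\A_Y$-relations appearing in \eqref{eq:holo arr} form a subset of the full defining relations of $\h(\A)$, so the corresponding relation ideals in the free Lie algebra nest. By construction $\tilde\omega$ projects to $\omega$ in the $Y$-coordinate. For any $Y' \ne Y$, the combinatorial lemma forces all but at most one of the generators $x_{H_i}$ to vanish in $\h(\A_{Y'})$; since any iterated bracket of length $\ge 2$ containing a zero factor vanishes, $\tilde\omega$ maps to $0$ in the $Y'$-coordinate. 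Summing, the element $\sum_Y \tilde\omega_Y \in \h_n(\A)$ is a preimage of an arbitrary tuple $(\omega_Y)_Y$, giving surjectivity.

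For the isomorphism claim in degree $n = 2$ it remains only to prove injectivity. Here I would invoke the Brieskorn-type decomposition \eqref{eq:brieskorn-OS}, $I^2(\A) \cong \bigoplus_{Y \in L_2(\A)} I^2(\A_Y)$, together with the duality $\h_2(-) \cong \gr_2(-) \cong (I^2(-))^*$ recorded in Proposition \ref{prop:ms}. Dualizing the Brieskorn decomposition yields $\h_2(\A) \cong \prod_Y \h_2(\A_Y)$, and naturality identifies the dualized isomorphism with $\h_2(j_\sharp)$. The main obstacle, such as it is, lies in the bookkeeping in the lifting step: one must verify that passing from a bracket expression in $\h(\A_Y)$ to the same bracket expression in $\h(\A)$ is well-defined on equivalence classes, which reduces to the elementary but essential containment of relation ideals noted above. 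Once this is in hand, both the surjectivity for $n \ge 3$ and the isomorphism for $n = 2$ drop out cleanly.
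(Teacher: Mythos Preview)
The paper does not prove this proposition; it is quoted from \cite{Papadima-Suciu-2006} and stated without argument. So there is no proof in the present paper to compare against.

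Your argument is correct. The surjectivity half is exactly the right idea: the section $\h(\A_Y)\to\h(\A)$ induced by the inclusion of generators is well-defined because the defining relations of $\h(\A_Y)$ are a subset of those of $\h(\A)$, and the combinatorial fact that two distinct rank-$2$ flats share at most one hyperplane kills the image of any length-$\ge 2$ bracket in every other coordinate. (Strictly speaking you do not even need the lift to be well-defined as a map on $\h(\A_Y)$: for surjectivity it suffices to pick, for each generating bracket, \emph{some} preimage in $\h_n(\A)$ with the right projection, and your construction does this.) For $n=2$, your appeal to the Brieskorn decomposition \eqref{eq:brieskorn-OS} and the identification $\h_2\cong\gr_2\cong (I^2)^*$ from Proposition~\ref{prop:ms} is the natural route; alternatively, once surjectivity is known, one can simply observe that both sides are free abelian of the same rank, so a surjection between them is an isomorphism.
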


Following \cite{Papadima-Suciu-2006}, we say that the arrangement $\A$ is {\em 
decomposable}\/ if the map $\h_3(j_{\sharp}) $ is an isomorphism (for related notions 
of decomposability, see also \cite{Cohen-Suciu-1999,Schenck-Suciu-2002}). 
The following theorem completely describes the structure of the associated 
graded and holonomy Lie algebras of a decomposable arrangement. 

\begin{theorem}[\cite{Papadima-Suciu-2006}]
\label{thm:PS-decomp}
If $\A$ is a decomposable arrangement, then the following hold:
\begin{enumerate} 
\item\label{dec1}
The map 
$\h'(j_{\sharp}) \colon \h'(\A)\to \prod_{Y\in L_2(\A)} \h'(\A_Y)$
is an isomorphism of graded Lie algebras. 
\item\label{dec2} 
The map $\Psi_{\A}\colon \h(\A)\surj \gr(G(\A))$ 
is an isomorphism.
\end{enumerate}
\end{theorem}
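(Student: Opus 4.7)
The plan is to establish statement~\eqref{dec1} first via a commutation argument, and then to derive statement~\eqref{dec2} from \eqref{dec1} by a naturality diagram chase that leverages the graded-formality of the local arrangement groups.

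For \eqref{dec1}, the starting point is that for each $Y\in L_2(\A)$, the local group $G(\A_Y)\cong F_{\mu(Y)}\times \Z$ has holonomy Lie algebra $\h(\A_Y)\cong \Lie(\mu(Y))\oplus \Z\cdot s$ with $s$ central, so $\h'_n(\A_Y)=\h_n(\A_Y)$ for $n\ge 2$. Surjectivity of $\h'(j_{\sharp})$ in each positive degree is immediate from Proposition~\ref{prop:PS-holo-prod}. For injectivity, the decomposability hypothesis supplies the base case: $\h_3(j_{\sharp})$ is an isomorphism. I would exploit this to prove the vanishing of ``cross'' brackets in $\h(\A)$: for any $Y\in L_2(\A)$, any $x\in \h_2(\A_Y)\subset \h_2(\A)$, and any generator $x_H\in\h_1(\A)$ with $H\not\supset Y$, one has $[x,x_H]=0$ in $\h_3(\A)$. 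Indeed, componentwise in $\prod_{Z}\h_3(\A_Z)$ this bracket vanishes (for $Z\ne Y$ the $Z$-component of $x$ is zero, while for $Z=Y$ the $Z$-component of $x_H$ is zero), and the injectivity of $\h_3(j_{\sharp})$ forces vanishing already in $\h(\A)$. Iterating, the Lie subalgebras of $\h(\A)$ generated by the disjoint families $\{\h_2(\A_{Y})\}_{Y}$ commute pairwise within $\h'(\A)$, yielding a well-defined map $\prod_{Y}\h'(\A_{Y})\to \h'(\A)$ inverse to $\h'(j_{\sharp})$. The induction on degree uses that $\h'(\A)$ is generated, as a Lie ideal of $\h(\A)$, by $\h_2(\A)$, which itself splits as $\bigoplus_{Y}\h_2(\A_{Y})$ via the Brieskorn-type decomposition~\eqref{eq:brieskorn-OS}.

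For \eqref{dec2}, I would set up the naturality square
\begin{equation*}
\begin{tikzcd}
\h(\A)\ar[r,"\Psi_{\A}"] \ar[d, "\h(j_{\sharp})" ']
	& \gr(G(\A)) \ar[d, "\gr(j_{\sharp})"]\\
\prod_{Y}\h(\A_Y) \ar[r, "\prod\Psi_{\A_Y}" ']
	& \prod_{Y}\gr(G(\A_Y))
\end{tikzcd}
\end{equation*}
and restrict to derived subalgebras. The maps $\Psi_{\A_Y}$ are isomorphisms because each $F_{\mu(Y)}\times \Z$ is graded-formal: its associated graded Lie algebra is a free Lie algebra on $\mu(Y)$ generators plus a central $\Z$-summand, agreeing with its quadratic holonomy approximation. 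By part~\eqref{dec1}, the left vertical restriction is an isomorphism. The composite $\prod\Psi'_{\A_Y}\circ \h'(j_{\sharp})$ is thus an isomorphism, and since it factors through the surjection $\Psi'_{\A}$, this surjection must also be injective. Combined with the fact that $\Psi_{\A}$ is the identity on $H_1(G(\A))$, this proves that $\Psi_{\A}$ is an isomorphism of graded Lie algebras.

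The main obstacle will be making the commutation argument in \eqref{dec1} rigorous at every degree $n\ge 3$ and over the integers. Propagating the degree-$3$ vanishing to higher degrees requires careful inductive bookkeeping about which elements of $\h(\A)$ land in the kernel of $\h_n(j_\sharp)$, and integral torsion-freeness of $\h'(\A)$ must be established in tandem with the product decomposition, by comparison with $\prod_Y \h'(\A_Y)\cong \prod_Y \Lie'(\mu(Y))$, which is a priori torsion-free as a product of derived free Lie algebras.
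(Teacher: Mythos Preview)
The paper does not contain a proof of this theorem: it is quoted verbatim from \cite{Papadima-Suciu-2006} and used as a black box. So there is no ``paper's own proof'' to compare against here.

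That said, your sketch is essentially the argument of \cite{Papadima-Suciu-2006}. The core mechanism there is exactly the commutation step you isolate: the decomposability hypothesis (injectivity of $\h_3(j_\sharp)$) forces the vanishing in $\h(\A)$ of brackets $[x,x_H]$ with $x\in\h_2(\A_Y)$ and $H\not\supset Y$, and this seed propagates by induction on degree to give a Lie-algebra section $\prod_Y\h'(\A_Y)\to\h'(\A)$ inverse to $\h'(j_\sharp)$. Your derivation of \eqref{dec2} via the naturality square and the graded-formality of $F_{\mu(Y)}\times\Z$ is likewise the route taken there. The ``main obstacle'' you flag---carrying the degree-$3$ vanishing upward over $\Z$---is genuine but is handled in \cite{Papadima-Suciu-2006} by precisely the inductive bookkeeping you anticipate; once \eqref{dec1} is established, torsion-freeness of $\h'(\A)$ is a consequence of the isomorphism with $\prod_Y\Lie'(\mu(Y))$, not an independent input.
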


It follows from this theorem that the groups $\h_n(\A)=\gr_n(G(\A))$ 
are torsion-free, with ranks $\phi_n=\phi_n(G(\A))$ given by 
\begin{equation}
\label{declcs}
\prod_{n=1}^{\infty}(1-t^n)^{\phi_n}=
(1-t)^{\abs{\A}-\sum_{Y\in L_2(\A)} \mu(Y)}
\prod_{Y\in L_2(\A)} (1- \mu(Y) t)\, .
\end{equation}
Moreover, since the holonomy Lie algebra of any arrangement 
is combinatorially determined, we have the following immediate corollary.

\begin{corollary}
\label{cor:decomp-comb}
If $\A$ and $\B$ are decomposable arrangements with
$L_{\le 2}(\A) \cong L_{\le 2}(\B)$, then $\gr(G(\A))\cong \gr(G(\B))$.
\end{corollary}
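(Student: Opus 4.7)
The plan is to chain together the two combinatorial-determinacy results already established in the section. Since both $\A$ and $\B$ are assumed decomposable, Theorem \ref{thm:PS-decomp}\eqref{dec2} gives isomorphisms $\Psi_{\A}\colon \h(\A)\isom \gr(G(\A))$ and $\Psi_{\B}\colon \h(\B)\isom \gr(G(\B))$ of graded Lie algebras. Thus it suffices to exhibit an isomorphism $\h(\A)\cong \h(\B)$, which will then transfer to an isomorphism $\gr(G(\A))\cong \gr(G(\B))$ via $\Psi_{\B}\circ (\text{isomorphism})\circ \Psi_{\A}^{-1}$.

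For the isomorphism of holonomy Lie algebras, I would invoke the explicit presentation given in \eqref{eq:holo arr}: the holonomy Lie algebra $\h(\A)$ is the quotient of the free Lie algebra on generators $\{x_H\}_{H\in\A}$ by relations indexed by pairs $(H,Y)$ with $H\in\A$, $Y\in L_2(\A)$, and $H\supset Y$. Any isomorphism of ranked posets $\varphi\colon L_{\le 2}(\A)\isom L_{\le 2}(\B)$ restricts to a bijection between the hyperplanes (the rank-$1$ flats) and a bijection between the rank-$2$ flats compatible with the incidence relation $H\supset Y$. Consequently, sending $x_H\mapsto x_{\varphi(H)}$ extends to a Lie algebra isomorphism of the free Lie algebras that carries the defining relations of $\h(\A)$ bijectively onto those of $\h(\B)$, yielding the desired isomorphism $\h(\A)\cong \h(\B)$.

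Combining the three isomorphisms gives $\gr(G(\A))\cong \h(\A)\cong \h(\B)\cong \gr(G(\B))$, completing the proof. There is no real obstacle: all the content is already packaged in Theorem \ref{thm:PS-decomp} and in the earlier observation that the holonomy Lie algebra depends only on the truncated intersection lattice $L_{\le 2}$; the corollary is just a formal consequence. The only point worth verifying carefully is that the presentation \eqref{eq:holo arr} makes the dependence of $\h(\A)$ on $L_{\le 2}(\A)$ manifest, which it does once one records that a rank-preserving poset isomorphism supplies the bijection of generators and of quadratic relators.
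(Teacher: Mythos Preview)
Your proof is correct and follows essentially the same approach as the paper: the corollary is presented there as an immediate consequence of Theorem~\ref{thm:PS-decomp}\eqref{dec2} together with the earlier observation (stated right after \eqref{eq:holo arr}) that $\h(\A)$ depends only on $L_{\le 2}(\A)$. Your argument just unpacks the latter observation via the explicit presentation, which is exactly the intended reasoning.
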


\subsection{Nilpotent quotients and localized arrangements}
\label{subsec:nilp-decomp}
Our goal now is to strengthen Corollary \ref{cor:decomp-comb} 
from the level of the LCS quotients $\gr_n(G(\A))$ to the level 
of the nilpotent quotients $G(\A)/\Gamma_n(G(\A))$.  We start 
with some preparatory results on the second homology  
of these nilpotent groups.

\begin{lemma}
\label{lem:h2-decomp}
Let $\A$ be an arrangement and set $G=G(\A)$. 
There is then a natural, split exact sequence
\begin{equation}
\label{eq:h2-arr}
\begin{tikzcd}[column sep=16pt]
0\ar[r]
	&\h_{3}(\A) \ar[r]
	& H_2(G/\Gamma_3(G)) \ar[r]
	& H_2(M(\A))\ar[r]
	& 0 \, .
\end{tikzcd}
\end{equation}
Moreover, if $\A$ is decomposable, then for every $n\ge 3$ 
there is a natural, split exact sequence
\begin{equation}
\label{eq:h2-decomp}
\begin{tikzcd}[column sep=16pt]
0\ar[r]
	&\h_{n}(\A) \ar[r]
	& H_2(G/\Gamma_n(G))  \ar[r]
	& H_2(M(\A))\ar[r]
	& 0 \, .
\end{tikzcd}
\end{equation}
\end{lemma}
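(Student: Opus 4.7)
\smallskip

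The proof should be a direct application of the machinery already assembled. The plan is to verify that $X = M(\A)$ satisfies the hypotheses of Theorem \ref{theorem-split-exact}, and then translate the conclusion from $\gr_n(G)$ into $\h_n(\A)$ using the earlier comparison results.

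First I would check the hypotheses. The abelianization $H_1(M(\A))$ is free abelian of rank $\abs{\A}$, hence finitely generated and torsion-free. Since the Orlik--Solomon description shows that $H^*(M(\A))$ is generated in degree one, the cup-product map $H^1(M(\A))\wedge H^1(M(\A))\to H^2(M(\A))$ is surjective, so Lemma \ref{lem:cup-holo} yields that the holonomy map $(h_2)_*\colon H_2(M(\A))\to H_1(M(\A))\wedge H_1(M(\A))$ is injective. Consequently, Theorem \ref{theorem-split-exact} applies to $X = M(\A)$ and produces, for every $n\ge 2$, a natural split exact sequence
\begin{equation}
\label{eq:pf-h2-plan}
\begin{tikzcd}[column sep=18pt]
0 \ar[r] & \gr_{n+1}(G) \ar[r] & H_2(G/\Gamma_{n+1}(G)) \ar[r] & H_2(M(\A)) \ar[r] & 0\,.
\end{tikzcd}
\end{equation}

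For the first assertion of the lemma, I would take $n = 2$ in \eqref{eq:pf-h2-plan} and then invoke Theorem \ref{thm:gr3h3-arr} (which applies to every arrangement) to identify $\gr_3(G)$ with $\h_3(\A)$; substituting this identification into the above sequence delivers \eqref{eq:h2-arr}. Naturality is inherited from the naturality of Theorem \ref{theorem-split-exact} and of the canonical projection $\h(\A)\surj \gr(G)$.

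For the second assertion, under the decomposability hypothesis, part \eqref{dec2} of Theorem \ref{thm:PS-decomp} upgrades the comparison map $\Psi_\A\colon \h(\A)\to \gr(G)$ to a graded isomorphism in all degrees. Hence $\h_n(\A)\cong \gr_n(G)$ for all $n\ge 1$, and substituting this identification into \eqref{eq:pf-h2-plan} (with $n+1$ replaced by $n\ge 3$) yields the split exact sequence \eqref{eq:h2-decomp}. There is no genuine obstacle here: the content of the lemma is a packaging step, and the only thing to be careful about is checking the input hypotheses of Theorem \ref{theorem-split-exact} and citing the correct identification ($\h_3\cong\gr_3$ in general, versus $\h_n\cong\gr_n$ in the decomposable case) at the appropriate spot.
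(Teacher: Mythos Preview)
Your proof is correct and follows essentially the same route as the paper's own proof, which simply cites Theorems \ref{theorem-split-exact} and \ref{thm:gr3h3-arr} for the first assertion and Theorems \ref{theorem-split-exact} and \ref{thm:PS-decomp} for the second. You have merely filled in the verification of the hypotheses of Theorem \ref{theorem-split-exact} (using Lemma \ref{lem:cup-holo} and the Orlik--Solomon description), which the paper leaves implicit since these facts were already recorded in \S\ref{sect:hyp-arr}.
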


\begin{proof}
The first assertion follows from Theorems \ref{theorem-split-exact}  
and \ref{thm:gr3h3-arr}, while the second assertion follows from 
Theorems \ref{theorem-split-exact} and \ref{thm:PS-decomp}.
\end{proof}

For an arbitrary arrangement $\A$ and 
for a $2$-flat $Y\in L_2(\A)$, we let $\A_Y$ 
be the corresponding localized arrangement, and write 
$G_Y=G(\A_Y)$. The inclusion map $j_Y\colon M(\A)\to M(\A_Y)$ 
induces a homomorphism $(j_Y)_{\sharp}\colon G\to G_Y$ on 
fundamental groups, which in turn induces homomorphisms 
\begin{equation}
\label{eq:nilp-n-jy}
\begin{tikzcd}[column sep=18pt]
N_n(j_Y) \colon G/\Gamma_n(G)\ar[r] 
&G_Y/\Gamma_n(G_Y)\, .
\end{tikzcd}
\end{equation}
on the respective nilpotent quotients. Assembling these maps, 
we obtain a homomorphism 
\begin{equation}
\label{eq:nilp-n-map}
\begin{tikzcd}[column sep=18pt]
N_n(j) \colon G/\Gamma_n(G)\ar[r] 
&\prod_{Y\in L_2(\A)} G_Y/\Gamma_n(G_Y)\, .
\end{tikzcd}
\end{equation}

\begin{proposition}
\label{prop:nilp2-arr}
For any arrangement $\A$, and 
for each $n\ge 3$, the map $N_n(j)$   
induces a surjection in second homology, 
\begin{equation}
\begin{tikzcd}[column sep=20pt]
N_n(j)_*\colon H_2(G/\Gamma_n(G))\ar[r, two heads] 
& \bigoplus_{Y\in L_2(\A)} H_2(G_Y/\Gamma_n(G_Y))\, .
\end{tikzcd}
\end{equation}
Moreover, if $\A$ is decomposable, then the maps $N_n(j)_*$ 
are isomorphisms, for all $n\ge 3$. 
\end{proposition}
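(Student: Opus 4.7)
The plan is to realize both $H_2(G/\Gamma_n(G))$ and $\bigoplus_{Y \in L_2(\A)} H_2(G_Y/\Gamma_n(G_Y))$ as the middle terms of the natural split exact sequences supplied by Theorem~\ref{theorem-split-exact}, and then to apply the five lemma to the resulting ladder. The hypotheses of that theorem are in force for every complement involved: the Orlik--Solomon presentation makes the cup-product map of $M(\A)$ surjective, so Lemma~\ref{lem:cup-holo} forces the holonomy map to be injective, and the same reasoning applies to each $M(\A_Y)$.

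For every $n \ge 3$ I would therefore form the commutative ladder
\begin{equation*}
\begin{tikzcd}[column sep=small]
0 \ar[r] & \gr_n(G) \ar[r] \ar[d, "\gr_n(j_\sharp)" ']
       & H_2(G/\Gamma_n(G)) \ar[r] \ar[d, "N_n(j)_\ast"]
       & H_2(M(\A)) \ar[r] \ar[d, "\cong"] & 0 \\
0 \ar[r] & \bigoplus_Y \gr_n(G_Y) \ar[r]
       & \bigoplus_Y H_2(G_Y/\Gamma_n(G_Y)) \ar[r]
       & \bigoplus_Y H_2(M(\A_Y)) \ar[r] & 0,
\end{tikzcd}
\end{equation*}
in which every vertical arrow is induced by the inclusions $j_Y \colon M(\A) \hookrightarrow M(\A_Y)$. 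Commutativity is the naturality clause of Theorem~\ref{theorem-split-exact}, and the rightmost vertical is an isomorphism by the Brieskorn decomposition~\eqref{eq:brieskorn} combined with universal coefficients (all groups in sight are torsion-free). The whole problem thus reduces to analyzing the left-hand map $\gr_n(j_\sharp)$.

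For this analysis I would invoke the commutative square
\begin{equation*}
\begin{tikzcd}
\h_n(\A) \ar[r, "\h_n(j_\sharp)", two heads] \ar[d, two heads, "(\Psi_\A)_n" ']
   & \bigoplus_{Y} \h_n(\A_Y) \ar[d, "\cong"] \\
\gr_n(G) \ar[r, "\gr_n(j_\sharp)" '] & \bigoplus_{Y} \gr_n(G_Y),
\end{tikzcd}
\end{equation*}
whose top row is surjective for $n \ge 3$ by Proposition~\ref{prop:PS-holo-prod}. The right vertical is an isomorphism because every pencil $\A_Y$ is tautologically decomposable (its intersection lattice has a unique non-trivial flat), so Theorem~\ref{thm:PS-decomp}(2) gives $\h_n(\A_Y) \cong \gr_n(G_Y)$. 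Consequently $\gr_n(j_\sharp)$ is surjective, and the five lemma applied to the ladder above yields the surjectivity of $N_n(j)_\ast$. If $\A$ is itself decomposable, then Theorem~\ref{thm:PS-decomp}(2) promotes $(\Psi_\A)_n$ to an isomorphism, while Theorem~\ref{thm:PS-decomp}(1) gives that $\h_n(j_\sharp)$ is an isomorphism for every $n \ge 2$; thus $\gr_n(j_\sharp)$ is an isomorphism, and the five lemma now makes $N_n(j)_\ast$ an isomorphism.

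The only delicate point is essentially bookkeeping: one has to verify that the naturality of Theorem~\ref{theorem-split-exact} with respect to the continuous maps $j_Y$ really identifies the left column of the first ladder with the direct sum of the maps $\gr_n((j_Y)_\sharp)$. This reduces to the functoriality of the homology Serre spectral sequence of the central extension~\eqref{eq:extension} under the group homomorphisms $G \to G_Y$, which is standard.
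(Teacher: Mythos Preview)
Your proof is correct and follows essentially the same strategy as the paper's: both build the ladder of split exact sequences coming from Theorem~\ref{theorem-split-exact} (the paper via Lemma~\ref{lem:h2-decomp}), identify the right column as the Brieskorn isomorphism, and control the left column through Proposition~\ref{prop:PS-holo-prod} and Theorem~\ref{thm:PS-decomp}. Your version is in fact a bit more careful: the paper writes the left column directly as $\h_n(j_\sharp)$, which for the top row and general $\A$ is only literally justified when $n=3$ or $\A$ is decomposable, whereas you correctly work with $\gr_n(j_\sharp)$ and then factor through the holonomy comparison square to import the surjectivity from $\h_n(j_\sharp)$.
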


\begin{proof}
Fix $n\ge 3$, and 
set $N=G(\A)/\Gamma_n(G(\A))$ and 
$N_Y=G(\A_Y)/\Gamma_n(G(\A_Y))$. 
Consider the following diagram:
\begin{equation}
\label{eq:ladder}
\begin{tikzcd}[column sep=18pt]
0\ar[r] & \h_n(\A) \ar[r] \ar[d, "\h_n(j_{\sharp})"] 
& H_2(N) \ar[r]  \ar[d, "N_n(j)_*"] 
& H_2(M(\A))  \ar[r]  \ar[d, "j_*"]  & 0 \phantom{\, .}
\\
0\ar[r] & \bigoplus_{Y} \h_n(\A_Y) \ar[r] 
&  \bigoplus_{Y} H_2(N_Y) \ar[r] 
&\bigoplus_{Y} H_2 (M(\A_Y))  \ar[r] 
& 0 \, .
\end{tikzcd}
\end{equation}
It follows from Lemma \ref{lem:h2-decomp} that the top and bottom 
rows are (split) exact. Furthermore, the naturality 
of the exact sequence \eqref{eq:h2-decomp} implies 
that the diagram commutes.  
By Brieskorn's Lemma, the map $j_*$ is an isomorphism.  
Furthermore, by Proposition \ref{prop:PS-holo-prod}, 
the map $\h_n(j_{\sharp})$ 
is a surjection. The first claim follows at once.

If the arrangement is decomposable, then by Theorem \ref{thm:PS-decomp} 
the map $\h_n(j_{\sharp})$ is an isomorphism, whence 
$N_n(j)_*$ is also an isomorphism.
\end{proof}

\subsection{Lifting maps to nilpotent quotients}
\label{subsec:lifts}

Let $\A$ be an arrangement and set $G=\pi_1(M(\A))$. 
Composing a classifying map $M(\A)\to K(G,1)$ with the 
map $K(G,1)\to K(G_{\ab},1)$ induced by the abelianization 
homomorphism $G\to G_{\ab}$, we obtain a map of spaces, 
$h\colon M(\A)\to K(G_{\ab},1)$, uniquely defined up to homotopy. 
Fix an integer $n\ge 3$, and write $N=G/\Gamma_n(G)$ 
and $N_Y=G_Y/\Gamma_n(G_Y)$ for $Y\in L_2(\A)$. 

\begin{lemma}
\label{lem:localized-lifts}
Suppose $\ell\colon M(\A)\to K(N,1)$ is a (homotopy) lifting of $h$.  
For each $2$-flat $Y\in L_2(\A)$,  there is then a map 
$\ell_Y\colon M(\A_Y)\to K(N_Y,1)$ which lifts the map 
$h_Y\colon M(\A_Y)\to K((G_Y)_{\ab},1)$ and fits  
in the commuting diagram, 
\begin{equation}
\label{diagram:ell-maps}
\begin{tikzcd}[row sep=1.9pc, column sep=3pc]
K(N,1) \arrow[r, "N_n(j_Y)"]
	&  K(N_Y,1)  \\
M(\A) \arrow[u, "\ell"]   \arrow[r, "j_Y"]
& M(\A_Y)\arrow[u, "\ell_Y" '] \, .                          
\end{tikzcd}
\end{equation}
\end{lemma}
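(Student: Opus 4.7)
The plan is to exploit the fact that $K(N_Y,1)$ is an Eilenberg--MacLane space, so that homotopy classes of basepoint-preserving maps from a connected CW-complex $Z$ into $K(N_Y,1)$ are in natural bijection with homomorphisms $\pi_1(Z)\to N_Y$. This converts the geometric problem of constructing a lift into the purely algebraic one of writing down a homomorphism and checking two group-theoretic identities.

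First, I would define $\ell_Y\colon M(\A_Y)\to K(N_Y,1)$ as a (basepoint-preserving) map realizing the canonical projection $\pi_n^Y\colon G_Y\twoheadrightarrow G_Y/\Gamma_n(G_Y)=N_Y$ on fundamental groups; such a map exists and is unique up to homotopy. Since $n\ge 3$, we have $\Gamma_n(G_Y)\subset\Gamma_2(G_Y)$, hence $(N_Y)_{\ab}=(G_Y)_{\ab}$, and composing $\ell_Y$ with the map $K(N_Y,1)\to K((G_Y)_{\ab},1)$ induced by the abelianization $N_Y\to(N_Y)_{\ab}$ yields a map inducing the abelianization homomorphism $G_Y\to(G_Y)_{\ab}$ on $\pi_1$; this matches $h_Y$ on $\pi_1$, and so by the universal property this composite is homotopic to $h_Y$. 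Thus $\ell_Y$ is indeed a lifting of $h_Y$.

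Next I would verify that the square \eqref{diagram:ell-maps} commutes up to homotopy by comparing the two composites on $\pi_1$. Writing $\pi_n^G\colon G\twoheadrightarrow N$ for the canonical projection (which is precisely $(\ell)_\sharp$, since $\ell$ lifts $h$), one gets
\[
(\ell_Y\circ j_Y)_\sharp=\pi_n^Y\circ(j_Y)_\sharp
\quad\text{and}\quad
(N_n(j_Y)\circ\ell)_\sharp=N_n(j_Y)\circ\pi_n^G.
\]
But by the very definition of $N_n(j_Y)$ as the homomorphism on nilpotent quotients induced by $(j_Y)_\sharp$, the identity $\pi_n^Y\circ(j_Y)_\sharp=N_n(j_Y)\circ\pi_n^G$ holds tautologically. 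Hence the two composites $M(\A)\to K(N_Y,1)$ induce equal homomorphisms on $\pi_1$, and another invocation of the $K(\pi,1)$ property yields a basepoint-preserving homotopy between them.

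There is no serious obstacle here; the argument is essentially a tautology, since everything is forced once one reduces to $\pi_1$ via Eilenberg--MacLane theory. The only point requiring care is bookkeeping, namely maintaining throughout the distinction between strict commutativity at the level of groups and commutativity up to homotopy at the level of spaces, and ensuring that the chosen representative $\ell_Y$ can be taken as an actual cellular map so that the homotopies in \eqref{diagram:ell-maps} and in the lifting condition can be chosen compatibly; this follows from standard CW-approximation of maps into $K(\pi,1)$-spaces.
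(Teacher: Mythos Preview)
There is a genuine gap. You claim that $\ell_\sharp$ coincides with the canonical projection $\pi_n^G\colon G\twoheadrightarrow N$, on the grounds that ``$\ell$ lifts $h$''. But lifting $h$ only means that the composite $G\xrightarrow{\ell_\sharp}N\to N_{\ab}=G_{\ab}$ is the abelianization map; it does not force $\ell_\sharp=\pi_n^G$. Any homomorphism $G\to N$ agreeing with $\pi_n^G$ modulo $\Gamma_2(N)$ is such a lift, and for $n\ge 3$ there are generally many of these. Since your $\ell_Y$ is defined independently of $\ell$---as the map realizing $\pi_n^Y$---the square \eqref{diagram:ell-maps} need not commute when $\ell_\sharp\neq\pi_n^G$: the composite $N_n(j_Y)\circ\ell_\sharp$ then differs from $N_n(j_Y)\circ\pi_n^G=\pi_n^Y\circ(j_Y)_\sharp$ by the image under $N_n(j_Y)$ of an element of $\Gamma_2(N)$, and there is no reason this should vanish.

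The paper's proof avoids this by building $\ell_Y$ out of the given $\ell$, setting
\[
\ell_Y \;=\; N_n(j_Y)\circ\ell\circ r_Y,
\]
where $r_Y\colon M(\A_Y)\to M(\A)$ is the basepoint-preserving homotopy section of $j_Y$ supplied by Lemma~\ref{lem:dsy}. With this definition $\ell_Y$ genuinely depends on $\ell$, as the diagram forces it to, and both the lifting property and the commutativity are then read off from the relation $j_Y\circ r_Y\simeq\id$ rel $x_0$.
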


\begin{proof}
We define the map $\ell_Y$ by forming the composite
\begin{equation}
\label{eq:elly}
\begin{tikzcd}[column sep=26pt]
M(\A_Y) \ar[r, "r_Y"]& M(\A) \ar[r, "\ell"] & K(N,1)  \ar[r, "N_n(j_Y)"] & K(N_Y,1)\, ,
\end{tikzcd}
\end{equation}
where the first map is the one from Lemma \ref{lem:dsy}, while the 
last map is induced by the homomorphism $N_n(j_Y)\colon N\to N_Y$. 
The two claims follow at once.
\end{proof}

We now prove a converse to Lemma \ref{lem:localized-lifts}: given 
``local lifts" $\ell_Y$, there is a way to assemble them into a ``global lift," 
which we will denote by $\widetilde\ell$.  To state the result more precisely, 
start by recalling that the map $j\colon M(\A) \to \prod_{Y} M(\A_Y)$ 
induces an isomorphism 
$\h_2(j_{\sharp})\colon \h_2(G)\isom \bigoplus_Y \h_2(G_Y)$.

\begin{lemma}
\label{lem:sum}
Let $\A$ be an arrangement. Suppose that, for each 
$2$-flat $Y\in L_2(\A)$, we are given a lift 
$\ell_Y\colon M(\A_Y) \to K(N_Y,1)$ of the map 
$h_Y\colon M(\A_Y)\to K((G_Y)_{\ab},1)$. 
There is then a map $\widetilde\ell\colon M(\A) \to K(N,1)$ which lifts the map 
$h\colon M(\A)\to K(G_{\ab},1)$, and such that the following diagram commutes:
\begin{equation}
\label{diagram:coefficient-1}
\begin{tikzcd}[row sep=1.9pc]
H_2(N) \arrow[r, "N_n(j)_\ast"]
	& \bigoplus_{Y}H_2(N_Y)  \\
H_2(M(\A)) \arrow[u, "\widetilde\ell_\ast"]   \arrow[r, "j_*", "\cong" ']
& \bigoplus_{Y}H_2(M(\A_Y)) \arrow[u, "\bigoplus_Y (\ell_Y)_*" '] \, .                        
\end{tikzcd}
\end{equation}
\end{lemma}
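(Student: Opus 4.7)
The plan is to begin with a canonical candidate for $\widetilde\ell$, quantify its discrepancy from satisfying \eqref{diagram:coefficient-1}, and correct it using the surjectivity provided by Proposition~\ref{prop:PS-holo-prod}. Let $\ell_0\colon M(\A)\to K(N,1)$ denote the classifying map of the canonical projection $p\colon G\to N$; by construction, $\ell_0$ is a lift of $h$. Applying Lemma~\ref{lem:localized-lifts} to $\ell_0$ produces, for each $Y\in L_2(\A)$, a canonical local lift $\ell_Y^0 := K(N_n(j_Y))\circ \ell_0 \circ r_Y$ of $h_Y$; a short chase using Lemma~\ref{lem:dsy} together with the naturality relation $N_n(j_Y)\circ p = p_Y \circ (j_Y)_\sharp$ shows that $\ell_Y^0$ corresponds on fundamental groups to the canonical projection $p_Y\colon G_Y\to N_Y$.

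Since both $\ell_Y$ and $\ell_Y^0$ lift $h_Y$, the maps $(\ell_Y)_*$ and $(\ell_Y^0)_*$ are both splittings of the exact sequence \eqref{eq:h2-arr} for $\A_Y$ (note that $\A_Y$ is automatically decomposable, being a pencil). Consequently, the difference
\[
\delta_Y := (\ell_Y)_* - (\ell_Y^0)_* \colon H_2(M(\A_Y)) \longrightarrow H_2(N_Y)
\]
takes values in the kernel $\h_n(\A_Y)$ of the canonical surjection $H_2(N_Y)\twoheadrightarrow H_2(M(\A_Y))$. Precomposing with $(j_Y)_*$ and summing over $Y\in L_2(\A)$ yields a homomorphism
\[
\Delta \colon H_2(M(\A)) \longrightarrow \bigoplus_{Y\in L_2(\A)} \h_n(\A_Y),
\]
which is precisely the failure of diagram \eqref{diagram:coefficient-1} to commute when $\widetilde\ell = \ell_0$.

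By Proposition~\ref{prop:PS-holo-prod}, the natural map $\h_n(j_\sharp)\colon \h_n(\A)\to \bigoplus_Y \h_n(\A_Y)$ is surjective for $n\ge 3$, and since $H_2(M(\A))$ is a free abelian group (all integral homology of an arrangement complement is torsion-free by Brieskorn), the discrepancy $\Delta$ lifts to a homomorphism $\widetilde\Delta\colon H_2(M(\A))\to \h_n(\A)\subset H_2(N)$. It remains to modify $\ell_0$ into a new lift $\widetilde\ell$ of $h$ whose induced splitting of \eqref{eq:h2-arr} differs from $(\ell_0)_*$ by exactly $\widetilde\Delta$; for this $\widetilde\ell$, applying $N_n(j)_*$ to the modification recovers $\Delta$, canceling the discrepancy and giving the required commutativity.

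The main obstacle is this last realization step: verifying that the torsor of (homotopy classes of) lifts of $h$ to $K(N,1)$ acts transitively on the affine space of splittings of \eqref{eq:h2-arr}, so that every element of $\Hom(H_2(M(\A)), \h_n(\A))$ arises as the difference of splittings coming from two lifts. This requires unpacking the Serre spectral sequence construction in the proof of Theorem~\ref{theorem-split-exact} to track how the splitting $(h_{n+1})_*$ varies with the choice of classifying map, and in particular identifying the action of $[M(\A), K(\Gamma_2(N),1)]$ on second homology.
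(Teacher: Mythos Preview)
Your approach is genuinely different from the paper's, and the gap you flag at the end is real, not a routine verification. The paper does not attempt to realize an arbitrary element of $\Hom(H_2(M(\A)),\gr_n(G))$ as the difference of two lift-induced splittings; in fact a naive dimension count suggests this is too much to ask. For $n=3$, homotopy classes of lifts of $h$ form a torsor over $\Hom(H_1(M(\A)),\gr_2(G))$, while splittings form a torsor over $\Hom(H_2(M(\A)),\gr_3(G))$, and already for four lines with a single triple point the former has strictly smaller rank than the latter. What saves the statement is that the discrepancy $\Delta$ is not arbitrary---it comes from \emph{local} lifts $\ell_Y$, which are themselves constrained in the same way---but exploiting that matching of constraints is precisely the missing work, and it is not something the Serre spectral sequence bookkeeping alone will hand you. (A minor side issue: the subgroup sitting inside $H_2(N)$ is $\gr_n(G)$, not $\h_n(\A)$; these agree under decomposability but the lemma is stated for arbitrary $\A$, so your $\widetilde\Delta$ should land in $\gr_n(G)$.)

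The paper bypasses the realization problem entirely with an explicit construction. Writing each given local lift on generators as $(\ell_Y)_\sharp(x_H)=x_H\,a_2(H,Y)\cdots a_{n-1}(H,Y)$ with $a_i(H,Y)\in\gr_i(G_Y)$, one defines the global lift by
\[
\widetilde\ell_\sharp(x_H)\;=\;x_H\prod_{Y:\,H\supset Y}a_2(H,Y)\cdots\prod_{Y:\,H\supset Y}a_{n-1}(H,Y).
\]
Commutativity of \eqref{diagram:coefficient-1} then reduces to showing that for distinct $2$-flats $X\ne Y$ the composite
\[
\xi_{XY}\colon M(\A_X)\xrightarrow{\,r_X\,}M(\A)\xrightarrow{\,\widetilde\ell\,}K(N,1)\xrightarrow{\,N_n(j_Y)\,}K(N_Y,1)
\]
is zero on $H_2$. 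The combinatorial point is that $\A_X\cap\A_Y$ contains at most one hyperplane, so every generator $x_H$ with $H\supset X$ is killed by $(j_Y)_\sharp$ except possibly one; the image of $(\xi_{XY})_\sharp$ is therefore cyclic, the map factors through $K(\Z,1)$, and $H_2(\Z)=0$ does the rest. This cross-term vanishing is the heart of the argument, and it is exactly the geometric input your abstract correction scheme lacks.
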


\begin{proof}
Recall that we have a central extension 
\begin{equation}
\label{diagram:holo-nilp}
\begin{tikzcd}[row sep=18pt]
0\ar[r]& \gr_n(G)\ar[r]&  G/\Gamma_nG \ar[r]&  G/\Gamma_{n-1}G\ar[r]&  0\,.
\end{tikzcd}
\end{equation}
Recall also that the group $G$ is generated by meridians $x_H$ 
about the hyperplanes $H \in \A$, and likewise for $G_{\ab}$.  
Thus, if $\ell\colon M(\A) \to K(N,1)$ is any map  
which lifts $h\colon M(\A)\to K(G_{\ab},1)$, 
the homomorphism $\ell_{\sharp}\colon 
G \to N$ is given on generators by 
\begin{equation}
\label{eq:ell-sharp}
\ell_\sharp (x_H) = x_H a_2(H) \cdots a_{n-1} (H)\, ,
\end{equation}
for some $a_i(H) \in \gr_i(G)$.
What we need to do is pick these elements $a_i(H)$ in such a 
way so that diagram \eqref{diagram:coefficient-1} commutes.

First note the following consequence of Lemma  \ref{lem:dsy}: 
If $Z$ and $Y$ are different $2$-flats and $H \supset Z$, then
$(j_Y)_\sharp \circ (r_Z)_\sharp (x_H)$ is the identity element in
$N_Y$.

The next step is to see that if $Z$ and $Y$ are distinct $2$-flats and 
if $a \in \gr_{i}(G_Z)$, for $2 \le i \le n-1$,
then $(j_Y)_\sharp(a)$ is the identity element in $N_Y$. The group 
$\gr_{i}(G_Z)$ is generated by iterated brackets of the generators
$x_H$ for $H\in \A_Z$. 
If any one or more of these generators is replaced by the identity,
then the resulting bracket equals the identity.
Let $a$ be an iterated bracket in $\gr_i(G_Z)$. Since $a$
involves at least
one generator $x_{H^\prime}$ with $H^\prime \not\supset Y$, 
and since 
$(j_Y)_\sharp \circ (r_Z)_\sharp (x_{H^\prime})$ is the identity
in $N_Y$, it follows that 
$(j_Y)_\sharp \circ (r_Z)_\sharp (a)$ is also the identity in $N_Y$.

By \eqref{eq:ell-sharp}, the homomorphism $(\ell_Y)_\sharp\colon G_Y\to N_Y$ 
is given on generators by 
\begin{equation}
\label{eq:ell-y-sharp}
(\ell_Y)_\sharp (x_H) = x_H a_2(H,Y) \cdots a_{n-1} (H,Y)\, ,
\end{equation}
for some $a_i(H,Y) \in \h_i(G_Y)$, where $H \supset Y$.
Define a map $\widetilde{\ell} \colon M(\A) \to K(N,1)$ by requiring that
\begin{equation}
\label{diagram:ell-sharp-n}
\widetilde{\ell}_\sharp (x_H) 
= x_H \prod_{H\supset Y} a_2(H,Y) \cdots \prod_{H \supset Y}a_{n-1} (H,Y)\, .
\end{equation}

Now let $X$ and $Y$ be different $2$-flats in $L_2(\A)$ and 
consider the composition
\begin{equation}
\label{eq:composition}
\begin{tikzcd}[column sep=26pt]
\xi_{XY}\colon M(\A_X) \arrow[r,"r_X"] & M(\A) \arrow[r, "\widetilde\ell"] 
& K(N,1) \arrow[r, "N_n(j_Y)"]	& K(N_Y,1)  \, .                    
\end{tikzcd}
\end{equation}
From the result above, it follows that $(\xi_{XY})_{\sharp}(x_H)$ is 
the identity in $N_Y$ for all hyperplanes $H\in \A$ such that 
$H \supset X$ but $H \not\supset Y$.
Since there is a unique hyperplane $K$ with 
$K \supset X$ and $K \supset Y$, the image of the homomorphism 
$(\xi_{XY})_{\sharp}\colon G_X\to N_Y$ is the (infinite cyclic) 
subgroup generated by the single element 
$(\xi_{XY})_{\sharp}(x_K)$. Hence, the map $\xi_{XY}$ 
factors through $K(\Z,1)$. Since $H_2(\Z) = 0$, it follows  that 
the induced homomorphism 
$(\xi_{XY})_\ast \colon H_2(M(\A_X)) \to H_2(N_Y)$ 
is the zero map. The lemma now follows by a diagram chase.
\end{proof}

\subsection{The nilpotent quotients of a decomposable arrangement group}
\label{subsec:nilp3-decomp}

In \cite{Rybnikov-1,Rybnikov-2} Rybnikov showed that,
in general, the third nilpotent quotient of an arrangement group is {\em not}\/ 
determined by the intersection lattice.  Specifically, he produced 
a pair of arrangements $\A$ and $\B$ of $13$ hyperplanes in $\C^3$ 
such that $L (\A)\cong L (\B)$, yet 
$G(\A)/\Gamma_4(G(\A))\not\cong G(\B)/\Gamma_4(G(\B))$.
By contrast, we can use our approach to show that the phenomenon 
detected by Rybnikov cannot happen among decomposable 
arrangements. Here, then, is the main result of this section.

\begin{theorem}
\label{thm:decomposable-n}
If $\A$ and $\B$ are decomposable arrangements with
$L_{\le 2}(\A) \cong L_{\le 2}(\B)$, then, for each $n\ge 2$, 
there is an isomorphism
\begin{equation}
\label{eq:dec-iso}
\GGG{G(\A)}{n}\cong\GGG{G(\B)}{n}\, .
\end{equation}
\end{theorem}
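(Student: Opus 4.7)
The plan is to proceed by induction on $n$, with Theorem \ref{thm:main} as the engine. The base cases $n=2$ and $n=3$ are known: $H_1(M(\A))$ has rank $\abs{\A}$, and Proposition \ref{prop:ms} shows that $\GGG{G(\A)}{3}$ is combinatorially determined by $L_{\le 2}(\A)$ via the commutator relations \eqref{eq:nilp2-arr}. So suppose inductively that we have constructed an isomorphism $f_n \colon \GGG{G_a}{n} \isom \GGG{G_b}{n}$ extending $\ov{g}_1$, where $G_a = G(\A)$ and $G_b = G(\B)$.

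Set $N = \GGG{G_b}{n}$ and let $\ell_b \colon M(\B) \to K(N,1)$ be the canonical classifying map. To invoke Theorem \ref{thm:main} and extend $f_n$ to $f_{n+1}$, we must produce (i) a lift $\ell_a \colon M(\A) \to K(N,1)$ inducing the isomorphism $\GGG{G_a}{n} \isom N$ coming from $f_n$, and (ii) a splitting $\sigma \colon H_2(N) \to \gr_n(N)$ of the sequence \eqref{eq:split}, such that the triangle \eqref{diagram-triangle-2} commutes. Because $\A$ is decomposable, Theorem \ref{thm:PS-decomp}\eqref{dec2} identifies $\gr_n(N) = \h_n(\A)$, and part \eqref{dec1} gives a direct-sum decomposition $\gr_n(N) \cong \bigoplus_{Y\in L_2(\A)} \h_n(\A_Y)$; similarly for $\B$. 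Proposition \ref{prop:nilp2-arr} provides the corresponding direct-sum decomposition $H_2(N) \cong \bigoplus_Y H_2(N_Y)$. These decompositions, together with the Brieskorn isomorphism $H_2(M(\A)) \cong \bigoplus_Y H_2(M(\A_Y))$, are what allow us to assemble all data locally and then glue.

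The construction proceeds as follows. The poset isomorphism $L_{\le 2}(\A) \cong L_{\le 2}(\B)$ pairs each $Y \in L_2(\A)$ with some $Z \in L_2(\B)$ with $\mu(Y) = \mu(Z)$, which in turn gives a homeomorphism $M(\A_Y) \cong M(\B_Z)$ between the pencil complements and a compatible identification $N_Y \cong N_Z$. Applying Lemma \ref{lem:localized-lifts} to $\ell_b$ yields local lifts $\ell_{b,Z} \colon M(\B_Z) \to K(N_Z,1)$; transport them through the homeomorphisms above to obtain local lifts $\ell_{a,Y} \colon M(\A_Y) \to K(N_Y,1)$. Lemma \ref{lem:sum} then assembles the family $\{\ell_{a,Y}\}$ into a global lift $\ell_a \colon M(\A) \to K(N,1)$ fitting the naturality square \eqref{diagram:coefficient-1}. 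Define $\sigma$ as the direct sum of the analogous local splittings $\sigma_Y \colon H_2(N_Y) \to \h_n(\A_Y)$, which exist obviously since each $G(\A_Y) = F_{\mu(Y)} \times \Z$ is a product of a free group and $\Z$ and the sequence \eqref{eq:split} splits tautologically there.

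The main obstacle is verifying that the resulting triangle \eqref{diagram-triangle-2} commutes globally. Here is where decomposability does real work: by Lemma \ref{lem:sum} the compositions $(\ell_a)_*$ and $(\ell_b)_*$ are compatible with the Brieskorn decompositions of $H_2(M(\A))$ and $H_2(M(\B))$, and by Proposition \ref{prop:nilp2-arr} together with Theorem \ref{thm:PS-decomp} the map $\sigma$ respects the direct-sum decompositions of $H_2(N)$ and $\gr_n(N)$. Hence both $\mu_a = \sigma \circ (\ell_a)_*$ and $\mu_b = \sigma \circ (\ell_b)_*$ split as direct sums indexed by $Y \in L_2(\A)$, and the commutativity of the global triangle reduces to the commutativity of the local triangles, which is tautological because $\ell_{a,Y}$ and $\ell_{b,Z}$ were defined to correspond under the homeomorphism $M(\A_Y) \cong M(\B_Z)$. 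Theorem \ref{thm:main} then delivers the desired isomorphism $f_{n+1} \colon \GGG{G_a}{n+1} \isom \GGG{G_b}{n+1}$, completing the induction.
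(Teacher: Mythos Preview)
Your proposal is correct and follows essentially the same approach as the paper's proof: localize to rank-$2$ flats, use decomposability together with Proposition~\ref{prop:nilp2-arr} and Theorem~\ref{thm:PS-decomp} to reduce the problem to the local pieces, assemble the local lifts via Lemmas~\ref{lem:localized-lifts} and~\ref{lem:sum}, and then invoke Theorem~\ref{thm:main}. The only notable difference is that you make the induction on~$n$ explicit and obtain the local lifts $\ell_{a,Y}$ by direct transport through the homeomorphisms $M(\A_Y)\cong M(\B_Z)$, whereas the paper obtains the local data $(\ell_Y,\sigma_Y)$ in one stroke from the \emph{forward} implication of Theorem~\ref{thm:main} applied to the local groups (which are isomorphic as abstract groups, not merely at the level of nilpotent quotients); both routes yield the same commuting local triangles.
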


\begin{proof}
Let $G = \pi_1(M(\B))$ and set $N=G/\Gamma_n(G)$. 
We start by picking a lifting $\ell_B\colon M(\B)\to K(N,1)$ 
of the map $M(\B)\to K(G_{\ab},1)$.  
For each $2$-flat $Z\in L_2(\B)$, 
we obtain a map $\ell_Z\colon M(\B_Z)\to K(N_Z,1)$, 
defined as in Lemma \ref{lem:localized-lifts}.  

Having an isomorphism of posets $L_{\le 2} (\A)\cong L_{\le 2} (\B)$
means we have a bijection $\A\to \B$ which induces a compatible 
bijection $L_2(\A) \to L_2(\B)$. Let  $Y\in L_2(\A)$ and $Z\in L_2(\B)$ 
be a pair of $2$-flats which correspond 
under the aforementioned bijection. Using the description of localized 
arrangement complements from \eqref{subsec:local}, we obtain 
a   $f^{YZ}\colon M(\A_Y)\to M(\B_Z)$ between the 
respective complements. 

By the forward implication of Theorem \ref{thm:main}, there is a map 
$\ell_Y \colon M(\A_Y)\to K(N_Y,1)$ and a splitting 
$\sigma_Y\colon H_2(N_Y)\to \h_n(N_Y)$ 
so that the following diagram commutes:
\begin{equation}
\label{eq:ryb-triangle-arr-n}
\begin{tikzcd}[row sep=1.5pc, column sep=1.3pc]
   & \h_n(N_Y)\\
   {}\\
   & H_2(N_Y) \ar[uu, "\sigma_Y" ']
   \\
 H_2(M(\A_Y)) \ar[rr, "f^{YZ}_*", "\cong" '] 
 \ar[uuur, "\mu_Y"]
 \ar[ur, "(\ell_Y)_\ast" ']
 & & H_2(M(\B_Z))\, .\ar[uuul, "\mu_Z" ']\ar[ul, "(\ell_Z)_\ast"]
\end{tikzcd}
\end{equation}
Using the bijection $L_2(\A) \to L_2(\B)$, the maps 
$f^{YZ}_*$ assemble to give an isomorphism 
$\Phi\colon H_2(M(\A)) \isom H_2(M(\B))$.  
The decomposability assumption together with 
Theorem \ref{thm:PS-decomp} insure that 
\begin{equation}
\label{eq:ps-dec-bis}
\h_n(G)\cong \bigoplus_{Z\in L_2(\B)} \h_n(G(\B_Z))\, .
\end{equation}
Furthermore, by Proposition \ref{prop:nilp2-arr}, we have that 
\begin{equation}
\label{eq:h2-dec-bis}
H_2(N)\cong \bigoplus_{Z\in L_2(\B)} H_2(N_Z)\, .
\end{equation}
Consequently, the homomorphisms $\sigma_Z\colon H_2(N_Y)\to \h_n(N_Y)$ 
may be assembled into a homomorphism $\sigma\colon H_2(N)\to \h_n(N)$.

Next, using the maps $\ell_Y\colon M(\A_Y)\to N_Y$, 
we define a lifting $\widetilde\ell_\A\colon M(\A) \to K(N,1)$ 
by the procedure outlined in Lemma \ref{lem:sum}.  It is 
then readily verified that the diagram
\begin{equation}
\label{eq:ryb-triangle-big-arr-n}
\begin{tikzcd}[row sep=1.5pc, column sep=1.5pc]
   & \h_n(N)\\
   {}\\
   & H_2(N) \ar[uu, "\sigma" ']
   \\
 H_2(M(\A)) \ar[rr, "\Phi", "\cong" '] 
 \ar[uuur, "\mu_{\A}"]
 \ar[ur, "(\widetilde{\ell}_{\A})_\ast" ']
 & & H_2(M(\B)) \ar[uuul, "\mu_{\B}" ']\ar[ul, "(\ell_{\B})_\ast"]
\end{tikzcd}
\end{equation}
commutes. The result follows from the backwards implication 
of Theorem~\ref{thm:main}. 
\end{proof}


\end{document}